\documentclass[12pt]{article}
\usepackage{amsfonts,amssymb,amsmath,fullpage,pst-node,rotating}

\newcommand{\R}{{\mathbb R}}

\newcommand{\C}{{\mathbb C}}

\newcommand{\bx}{{\bf x}}

\newcommand{\cB}{{\cal B}}

\newcommand{\rank}{{\rm rank}}

\newcommand{\cX}{{\cal X}}

\newcommand{\mbe}{\mbox{\boldmath${\eta}$}}

\newcommand{\mbi}{\mbox{\boldmath${\infty}$}}
\newcommand{\mbel}{{\boldmath${\eta}$}}

\newtheorem{definition}{Definition}
\newtheorem{theorem}{Theorem}
\newtheorem{lemma}{Lemma}

\newtheorem{remark}{Remark}
\newtheorem{proposition}{Proposition}
\newcommand{\proof}{\noindent {\bf Proof: }}

\newcommand{\qed}{\hfill{\bf QED}
\vspace{5mm}}

\begin{document}

\title{Asymptotic stability of heteroclinic cycles of type Y}

\author{Olga Podvigina\\
Institute of Earthquake Prediction Theory and\\
Mathematical Geophysics of the Russian Academy of Sciences,\\
84/32 Profsoyuznaya St, 117997 Moscow, Russian Federation
}

\maketitle

\begin{abstract}
We investigate stability of a new class of
heteroclinic cycles that we call {\it heteroclinic cycles of type Y}.
The cycles can be regarded as a generalisation of heteroclinic
cycles of type Z introduced in [Podvigina, Nonlinearity 25, 2012].
The type Y cycles differ from the cycles of type Z in the following:
The trajectories comprising a cycle of type Y belong to flow-invariant
subspaces that can be of different dimensions.
Unlike in the most studies of the stability of heteroclinic cycles,
we do not require that the eigenvalues of the linearisations of the
dynamical system near the equilibria are distinct.
Instead of the common assumption that the cycles are robust,
we prescribe flow-invariance of certain subspaces.
Similarly to type Z cycles, asymptotic stability and
fragmentary asymptotic stability of type Y cycles is determined by the eigenvalues
and eigenvectors of transition matrices. The matrices are products of
basic transition matrices that depend on the eigenvalues of linearisations and
the dimensions of the contracting subspaces.
\end{abstract}

\noindent

\section{Introduction}\label{sec_intro}

A heteroclinic cycle is an invariant set of a smooth dynamical system
\begin{equation}\label{eq_ode}
\dot{\bf x}=f({\bf x}),\quad f:\R^n\to\R^n,
\end{equation}
comprised of nodes and heteroclinic connections. The cycles often emerge in
the system related to population dynamics, chemistry, fluid dynamics,
neuroscience and many others \cite{ash24,hs98,may75,pj98,rab06}.
In geophysics, heteroclinic cycles are believed to be responsible for magnetic
field reversal and long-time climate variations \cite{ch03,cro03,cru12,pf10,op20}.

The existence, stability and bifurcations of heteroclinic
cycles have been extensively studied in the literature since 80s.
In the majority of these studies it was assumed that the nodes, involved in the
cycle, are equilibria and the connections are heteroclinic trajectories.
Following this assumption, in the present paper we study asymptotic stability
of a heteroclinic cycle $\cX$ comprised of equilibria, $\xi_j$,
and connecting trajectories, $\kappa_j$,
\begin{equation}\label{hetc}
\cX=\{\ \xi_1,...,\xi_m\ ;\ \kappa_1,...,\kappa_m\ \},\
\kappa_j:\xi_j\to\xi_{j+1},\ j=1,...,m-1,\ \kappa_m:\xi_m\to\xi_1.
\end{equation}

A set that attracts all trajectories in its small neighbourhood is called
asymptotically stable. Heteroclinic cycles that are not asymptotically stable
can attract a positive measure set from its small neighbourhood.
In \cite{op12} such cycles were
called {\it fragmentarily asymptotically stable}.
Asymptotic stability or fragmentary asymptotic stability of heteroclinic
cycles was considered in a number of papers. Conditions for stability of
so-called simple heteroclinic cycles in $\R^n$ with $n=3$ or 4 were proven,
e.g., in \cite{bra94,ks94,km95a,km04,pa11}, while more general cycles in
four-dimensional systems were considered in \cite{cas25,clp}.
Stability of particular cycles in dimensions $n>4$ was investigated in
\cite{ac10,bic19,pos22a,pos22b,cas22},
while cycles in dynamical systems of arbitrary dimension was studied in
\cite{gar,op12}.

In all these studies heteroclinic cycles were assumed to be robust.
Generically heteroclinic cycles are structurally
unstable, because an arbitrary small perturbation of $f$ in (\ref{eq_ode})
breaks a connection between two saddle steady states. However, the connections
can be structurally stable (or robust) if the dynamical system has a non-trivial
symmetry group and only symmetric perturbations are considered,
or if the system is constrained to preserve certain invariant subspaces
\cite{km04,fi96,gukhol,Kru97}.
Given that all connections
$\kappa_j:\xi_j\to\xi_{j+1}$ belong to invariant subspaces $P_j$,
where $\xi_{j+1}$ is stable in this subspace, the cycle persists with respect to
perturbations, preserving the subspaces.
Being robust, heteroclinic cycles can be observed in simulations or
experiments, for example, in rotating
convection between two plates and for turbulent flows in a boundary layer.
Another typical assumption in the studies of stability is that the
eigenvalues of $df(\xi_j)$ are distinct. In particular, simple heteroclinic
cycles satisfy this assumption.

In the present paper we introduce a class of heteroclinic cycles in $\R^n$,
that we call {\it type Y} heteroclinic cycles. Almost all heteroclinic
cycles cited above, including quasi-simple studied in \cite{gar} and cycles in
pluridimensions \cite{cas25},
belong to this class. It can be regarded as a generalisation of the
type Z cycles \cite{op12}. The following has been changed:

Type Y heteroclinic cycle, unlike type Z one, is not required to be robust.
Instead, we assume invariance of certain subspaces. In particular, the
invariance of $P_j\supset\kappa_j$ is assumed. The derivation of
the conditions for stability relies on the structure of local and
global maps that approximate a trajectory in the vicinity of a cycle.
For maps, the essential thing is the invariance of subspaces and not the
nature of the invariance.

We do not require the eigenvalues of $df(\xi_j)$ to be distinct. The invariance
of subspaces determines whether $df(\xi_j)$ may possess complex eigenvalues
and/or Jordan cells, therefore providing the expressions for local maps.
We do not assume that the dimensions of $P_j$ are equal,
as this was the case in \cite{op12}. The requirement is that the
dimension of the expanding subspace is one, namely that
$\dim P_j-\dim(P_j\cap P_{j-1})=1$.

The main result of this paper is that, despite the stated above differences,
the conditions for asymptotic stability expressed in the terms of eigenvalues
and eigenvectors of transition matrices for type Y cycles are the same as
the ones for type Z cycles proven in \cite{op12}.
A cycle is asymptotically stable, or
fragmentarily asymptotically stable, whenever certain inequalities on
eigenvalues and
eigenvectors of the transition matrices associated with the cycle are
satisfied. The transition matrices
are products of basic transition matrices. The basic matrices depend
on the dimension the contracting subspace equal to
$\dim P_{j-1}-\dim(P_j\cap P_{j-1})$ and on the eigenvalues of $df(\xi_j)$.

In section \ref{sec_defs} we recall definitions of asymptotic stability,
heteroclinic cycles, local and global maps, and define type Y heteroclinic
cycles. In sections \ref{stabcm} and \ref{smaps}
we derive necessary and sufficient conditions
for asymptotic stability and fragmentary asymptotic stability of the cycles.
In section \ref{exa} we give two examples of type Y
heteroclinic cycles in the generalised Lotka-Volterra system. One cycle
is asymptotically stable and the other one is
fragmentarily asymptotically stable. In the conclusion we briefly summarise
the results and outline directions for further studies. In appendix
A we prove a theorem about the rank of transition matrices, and in
appendix B two theorems providing
sufficient conditions for the existence of heteroclinic connections
comprising the cycles of section \ref{exa}.

\section{Definitions}\label{sec_defs}

\subsection{Stability}\label{sec_stab}

Given a set $X$ and a number $\varepsilon>0$, the
$\varepsilon$-neighbourhood of $X$ is
\begin{equation}\label{ep_nei}
B_{\varepsilon}(X)=\{{\bf x}\in \R^n:\ d({\bf x},X)<\varepsilon\}.
\end{equation}
Let $\Phi(\tau,{\bf x})$ be the trajectory
of system (\ref{eq_ode}) through ${\bf x}\in\R^n$.
For $X$, a compact invariant set of (\ref{eq_ode}), its
$\delta$-local basin of attraction is
\begin{equation}\label{del_bas}
\cB_{\delta}(X)=\{{\bf x}\in\R^n:\ d(\Phi(\tau,{\bf x}),X)<\delta\hbox{ for any }
\tau\ge0\hbox{ and }\lim_{\tau\to\infty}d(\Phi(\tau,{\bf x}),X)=0\}.
\end{equation}

\begin{definition}\label{def1}
A compact invariant set $X$ is called \underline{asymptotically stable}, if for any
$\delta>0$ there exists an $\varepsilon>0$ such that
$$B_{\varepsilon}(X)\subset\cB_{\delta}(X).$$
\end{definition}

\begin{definition}\label{def2}\cite{op12}
A compact invariant set $X$ \underline{fragmentarily asymptotically
stable}, if for any $\delta>0$
$$\mu(\cB_{\delta}(X))>0.$$
(Here $\mu$ is the Lebesgue measure of a set in $\R^n$.)
\end{definition}

\begin{definition}\label{def3}
A set $X$ is called \underline{completely unstable}, if there exists
$\delta>0$ such that $\mu(\cB_{\delta}(X))=0$.
\end{definition}

\subsection{Heteroclinic cycles}\label{defhet}

Let $\xi_1,\ldots,\xi_m\in\R^n$ be hyperbolic equilibria of (\ref{eq_ode})
and $\kappa_j:\xi_j\to\xi_{j+1}$, $j=1,\ldots,m$,
$\xi_{m+1}=\xi_1$, trajectories from $\xi_j$ to $\xi_{j+1}$.
\begin{definition}\label{defhc}
A heteroclinic cycle $\cX\in\R^n$ is a union of equilibria $\{\xi_1,\ldots,\xi_m\}$
and connecting trajectories $\{\kappa_1,\ldots,\kappa_m\}$.
\end{definition}

\begin{definition}\label{def72}
A heteroclinic cycle $\cX$ is of \underline{type Y} if
\begin{itemize}
\item[I]
For any $j$, $1\le j\le m$, the connection $\kappa_j$ belongs to $P_j$, an
invariant subspace of (\ref{eq_ode}), such that
$\dim P_j=\dim L_j+1$, where $L_j=P_{j-1}\cap P_j$.
\item[II] Let $\{v_1,...,v_K\}$ be the eigenvectors of $df(\xi_j)$ that
do not belong to $L_j$. For any set of indices $(k_1,...,k_d)$ the subspace
$L_j\oplus<v_{k_1},...,v_{k_d}>$ is an invariant subspace of (\ref{eq_ode}).
\end{itemize}
\end{definition}

\begin{remark}
For system (\ref{eq_ode}) considered in $\R^n$ the heteroclinic cycle of definition
\ref{defhc} is never asymptotically
stable according to definition \ref{def1} because the cycle does not contain
the whole unstable manifold of $\xi_j$ for any $j$.
To overcome this, in a $\Gamma$-equivariant system
a heteroclinic cycle is assumed to be (a connected
component of) the orbit under the action of the group of symmetries $\Gamma$
of the cycle in terms of definition \ref{defhc}.
In a non-symmetric context the system is typically considered
in $\R^n_+$, as this is the case, e.g., in population dynamics.
Any Cartesian plane or hyperplane is invariant due to the permanence of death.
The condition that the dimension of the unstable manifold is one implies
that the expanding eigenvector is a Cartesian basis vector. Hence, a half
of the unstable manifold goes to the negative direction and does not
belong to the phase space of the system.
\end{remark}

\subsection{Eigenvalues and eigenvectors}
\label{locstr}

Consider $\xi_j\in L_j=P_{j-1}\cap P_j$.
Following \cite{km95a,km95b,km04} for a type Y heteroclinic
cycle we divide eigenvalues of $df(\xi_j)$ into four classes:
\begin{itemize}
\item Eigenvalues with associated eigenvectors in $L_j$ are called {\it radial}.
\item Eigenvalues with associated
eigenvectors in $P_{j-1}\ominus L_j$ are called {\it contracting}.
\item Eigenvalues
with associated eigenvectors in $P_j\ominus L_j$ are called {\it expanding}.
\item Eigenvalues not belonging to any of the three above classes
are called {\it transverse}.
\end{itemize}

The condition $\dim P_j-\dim L_j=1$ implies that for any $j$ the
expanding subspace at $\xi_j$ is one-dimensional, while the dimensions
of other subspaces can be arbitrary.
Denote by $n_j^r$, $n_j^c$ and $n_j^t$ the number of radial,
contracting and transverse eigenvalues, respectively.
The radial eigenvalues and the associated eigenvectors near $\xi_j$ are denoted
by ${\bf r}_j=\{r_{j,l}\}$ and ${\bf v}_j^r=\{v_{j,l}^r\}$,
$1\le l\le n_j^r$,
the contracting ones
by ${\bf c}_j=\{c_{j,d}\}$ and ${\bf v}_j^c=\{v_{j,d}^c\}$,
$1\le d\le n_j^c$,
the expanding ones by $e_j$ and
$v_j^e$, and the transverse ones by ${\bf t}_j=\{t_{j,s}\}$ and
${\bf v}_j^t=\{v_{j,s}^t\}$, $1\le s\le n_j^t$, respectively.

Consider the bases in $\R^n$ comprised of the eigenvectors of $df(\xi_j)$.
The subspace $P_j$ is spanned by the radial and expanding eigenvectors
of $df(\xi_j)$, or by radial and contracting eigenvectors of $df(\xi_{j+1})$.
Denote by $P_j^{\perp}$ its complement spanned either by contracting and
transverse eigenvectors of $df(\xi_j)$, or by expanding and transverse
eigenvectors of $df(\xi_{j+1})$. The definition of type Y cycles implies that
the basis $\{v_{j+1}^e,{\bf v}_{j+1}^t\}$ in $P_j^{\perp}$ is a permutation of
the basis $\{v_j^c,{\bf v}_j^t\}$.
Hence, the matrix $A_j$ mapping components
of a vector in the basis $\{v_j^c,{\bf v}_j^t\}$ to components of the vector
in the basis $\{v_{j+1}^e,{\bf v}_{j+1}^t\}$ is a permutation matrix.

\medskip
Following \cite{ks94,km04,pa11,op12}, in order to examine stability
we construct a Poincar\'e map in the vicinity of the cycle.

\subsection{Collection of maps associated with a heteroclinic cycle}
\label{cycmap}

\begin{lemma}\label{lecol}
Consider a smooth mapping $g:\R^n\to\R^n$. Suppose that $\R^n=X\oplus Y$,
where the subspace $X$ is $g$-invariant, $Y$ is one-dimensional,
${\bf x}$ and $y$ are the coordinates in $X$ and $Y$, respectively
and $g_{X}$ and $g_{Y}$ denote the projections of $g$ on $X$ and $Y$.
Then for small $\bf x$ and $y$ the $Y$ component of $g$ satisfies
\begin{equation}\label{lemest}
g_{Y}({\bf x},y)=Cy^m(1+q({\bf x},y)),
\end{equation}
where $q({\bf x},y)<C_x|{\bf x}|+C_y|y|$, $m\ge1$ and generically $m=1$.
\end{lemma}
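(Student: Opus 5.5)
The approach is to use the invariance of $X$ to factor $y$ out of $g_Y$, and then Taylor-expand in $y$ about the point $(\mathbf 0,0)$.

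Invariance of $X$ means that $g(\mathbf{x},0)\in X$ for every $\mathbf x$, that is, $g_Y(\mathbf{x},0)\equiv 0$. Since $g$ is smooth, Hadamard's lemma applied in the single variable $y$ gives
$$g_Y(\mathbf{x},y)=y\,h_1(\mathbf{x},y),\qquad h_1(\mathbf{x},y)=\int_0^1 \partial_y g_Y(\mathbf{x},sy)\,ds,$$
and $h_1$ is again smooth.

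\textbf{Generic case, $m=1$.} Suppose $C:=h_1(\mathbf 0,0)=\partial_y g_Y(\mathbf 0,0)\neq 0$. Set
$$q(\mathbf x,y)=\frac{h_1(\mathbf x,y)}{C}-1 .$$
Then $q$ is smooth and $q(\mathbf 0,0)=0$. The mean value theorem on a compact ball around the origin gives
$$|q(\mathbf x,y)|\le C_x|\mathbf x|+C_y|y|,$$
with $C_x$ and $C_y$ equal to the maxima of $|\partial_{\mathbf x}q|$ and $|\partial_y q|$ on that ball. This yields (\ref{lemest}) with $m=1$. This case is generic because $\partial_y g_Y(\mathbf 0,0)=0$ is a codimension-one condition on $g$. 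In the application, $g$ is a local or global map near a hyperbolic equilibrium, and there this derivative is essentially the nonzero multiplier $e^{\lambda\tau}$, or a nonzero entry of the permutation-type matrix $A_j$.

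\textbf{Degenerate case.} If $h_1(\mathbf 0,0)=0$, iterate. Write $h_1(\mathbf x,y)=h_1(\mathbf x,0)+y\,h_2(\mathbf x,y)$ and keep expanding until the first order $m$ with $\partial_y^m g_Y(\mathbf 0,0)\ne 0$. Take
$$C=\frac{\partial_y^m g_Y(\mathbf 0,0)}{m!}$$
and define $q$ as before; the same bound on $q$ follows.

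The main obstacle is here. The lower-order coefficients $\partial_y^k g_Y(\mathbf x,0)$ with $k<m$ vanish at $\mathbf x=\mathbf 0$ but need not vanish identically in $\mathbf x$. In that case $g_Y$ is not literally of the form $Cy^m(1+q)$ with $q$ small. To close this case I would first try to show that invariance forces these coefficients to vanish identically on $X$. If that fails, I would restrict $m$ to the largest order for which $g_Y$ is divisible by $y^m$, that is, the largest $m$ with $\partial_y^k g_Y(\mathbf x,0)\equiv0$ for all $k<m$. Then $g_Y=y^m h_m(\mathbf x,y)$, and provided $h_m(\mathbf 0,0)\neq0$ the argument from the generic case goes through verbatim with $C=h_m(\mathbf 0,0)$. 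If instead $h_m(\mathbf 0,0)=0$ while $h_m(\cdot,0)\not\equiv 0$, I would note that this is a nongeneric situation outside the scope of the estimate.

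Since the paper only ever uses the generic case $m=1$, I would give that case in full and treat $m>1$ only in this brief form.
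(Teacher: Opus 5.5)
Your proof is correct and is essentially the paper's. The paper expands $g_Y=\sum_{k\ge0}F_k(\mathbf x)y^k$, uses invariance of $X$ to get $F_0\equiv0$, takes $m$ to be the smallest $k$ with $F_k\not\equiv0$ (exactly your divisibility order), and then simply appeals to smoothness.

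Your version is more careful in two ways. Hadamard's lemma avoids the tacit analyticity of that series. The condition you isolate, $h_m(\mathbf 0,0)=F_m(\mathbf 0)\neq0$, rather than the paper's $F_1\not\equiv0$, is what that final step silently needs. For example, $g_Y=xy+y^2$ respects the invariance of $X$ yet admits no representation $Cy^m(1+q)$ with $q\to0$ at the origin. The same example shows that your first fallback, proving that the lower coefficients vanish identically, cannot work and should be dropped.
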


\proof
The smoothness of $g$ implies that for small ${\bf x}$ and $y$ we can write
$$g_{Y}({\bf x},y)=F_0({\bf x})+\sum_{1\le k<\infty}F_k({\bf x})y^k.$$
The invariance of $X$ implies that $F_0({\bf x})\equiv0$.
Setting $m$ to be the smallest $k$ such that $F_k({\bf x})$ is not
identically zero and using that $g$ is smooth we prove the lemma.
Generically, we have that $F_1({\bf x})\not\equiv0$
\qed

In subsection~\ref{locstr} we have defined radial, contracting,
expanding and transverse eigenvalues of the linearisation $df(\xi_j)$.
Denote $n_j=\dim P_j^{\perp}$. The definition of type Y cycles implies that
$n_j^c+n_j^t=n_{j+1}^t+1=n_j$ and
$n_j^r+1=n_{j+1}^r+n_{j+1}^c=\dim P_j=n-n_j$.
Let $(\tilde{\bf u},\tilde {\bf v},\tilde w,\tilde{\bf z})$ be local coordinates
near $\xi_j$ in the basis, where radial eigenvectors come the first
(the respective coordinates are $\tilde{\bf u}$), followed by the contracting
and the expanding eigenvectors, the transverse eigenvectors being the last.

Suppose $h$ is small. Denote by $B_{h}(\xi_j)$ the $h$-neighbourhood of $\xi_j$,
$$
B_{h}(\xi_j)=\{(\tilde{\bf u},\tilde {\bf v},\tilde w,\tilde{\bf z})\ :\
\max(|\tilde{\bf u}|,|\tilde {\bf v}|,|\tilde w|,|\tilde{\bf z}|)<h\}.
$$
The definition of type Y cycles implies that for each of contracting,
expanding and transverse coordinates the conditions of lemma \ref{lecol}
for $g$ being the r.h.s. of (\ref{eq_ode}) are satisfied.
Therefore, by lemma \ref{lecol}  near $\xi_j$ for each of these
coordinates the system can be approximated by $\dot y=Cy$. We have
\begin{equation}\label{lmap1}
\renewcommand{\arraystretch}{1.2}
\begin{array}{l}
\dot v_d=c_{j,d} v_d,\ 1\le d\le n^c_j\\
\dot w=e_j w\\
\dot z_s=t_{j,s}z_s,\ 1\le s\le n^t_j.
\end{array}
\end{equation}
The radial coordinates satisfy
\begin{equation}\label{lmap2}
\dot {\bf u}=A^{rad}{\bf u}.
\end{equation}
The matrix $A^{rad}$ may possess Jordan cells and/or complex eigenvalues.
We denote by $({\bf u},{\bf v},w,{\bf z})$ the scaled coordinates
$({\bf u},{\bf v},w,{\bf z})=
(\tilde {\bf u},\tilde {\bf v},\tilde w,\tilde {\bf z})/h$.

Consider a neighbourhood of a steady state $\xi_j$. Let $({\bf u}_0,{\bf v}_0)$ be
the point in $P_{j-1}$ where trajectory $\kappa_{j-1}$ intersects
with the sphere $|{\bf u}|^2+|{\bf v}|^2=1$, and $\bf q$ be local
coordinates in the hyperplane tangent to the sphere at the point
$({\bf u}_0,{\bf v}_0)$. Coordinates $({\bf u},{\bf v})$ of a point in the hyperplane
are related to coordinates $\bf q$ as follows:
\begin{equation}\label{relat}
\left(
\begin{array}{c}
{\bf u}\\
{\bf v}
\end{array}
\right)={\cal D}_j{\bf q}=
\left(
\begin{array}{c}
{\bf u}_0\\
{\bf v}_0
\end{array}
\right)+D_j{\bf q},
\end{equation}
where $D_j$ is an $(n-n_j)\times(n-n_j-1)$ matrix.

Near $\xi_j$ we define two crossections of the heteroclinic cycle. One,
denoted by $\tilde H^{(in)}_j$, is an $(n-1)$-dimensional hyperplane
intersecting connection $\kappa_{j-1}$ at the point $({\bf u}_0,{\bf v}_0,0,0)$;
coordinates in the hyperplane are $({\bf q},w,{\bf z})$. Another one,
$\tilde H^{(out)}_j$, is parallel to the hyperplane $w=0$ and intersects
connection $\kappa_j$ at the point $w=1$; coordinates in the hyperplane are
$({\bf u},{\bf v},{\bf z})$. Near $\xi_j$ trajectories of the system (\ref{eq_ode})
can be approximated by a local map (called the {\em first return
map})
$\phi_j:\tilde H^{(in)}_j\to \tilde H^{(out)}_j$ relating a point, where a
trajectory enters the neighbourhood, to the point, where it exits.
In the leading order (see (\ref{lmap1})-(\ref{relat})\,), the local map is
\begin{equation}\label{phmap}
\renewcommand{\arraystretch}{1.4}
\begin{array}{l}
({\bf u}^{out},{\bf v}^{out},{\bf z}^{out})=\phi_j({\bf q},w,\{z_s\})=\\
(f^{rad}(A^{rad},{\bf u}_0+P^{rad}D_j{\bf q},w,e_j),
\{(v_{0d}+P^{con}_dD_j{\bf q})w^{-c_{j,d}/e_j}\},\{z_sw^{-t_{j,s}/e_j}\}),
\end{array}
\end{equation}
where $P^{rad}$ is the projection on the radial subspace and
$P^{con}_d$ the projection on the direction of the $d$-th contracting eigenvector.
The expressions for ${\bf z}^{out}$ follows from (\ref{lmap1}), the ones for
${\bf v}^{out}$ from (\ref{lmap1}) and (\ref{relat}).
The mapping $f^{rad}$ provides the radial coordinates for $\phi_j$.
No explicit expressions can be given since the structure of the eigenspaces
of the matrix $A^{rad}$ is unknown. An estimate for the mapping $f^{rad}$
will be proven in section \ref{stabcm}.

Near connection $\kappa_j$ the system (\ref{eq_ode}) can be approximated
by a global map (also called a {\em connecting diffeomorphism})
$\psi_j:\tilde H^{(out)}_j\to \tilde H^{(in)}_{j+1}$,
\begin{equation}\label{cdif}
\left(
\begin{array}{c}
{\bf q}^{j+1}\\
w^{j+1}\\
{\bf z}^{j+1}
\end{array}
\right):=\psi_j
\left(
\begin{array}{c}
{\bf u}^j\\
{\bf v}^j\\
{\bf z}^j
\end{array}
\right)=A^{\rm glob}_jB^{\rm glob}_j
\left(
\begin{array}{c}
{\bf u}^j\\
{\bf v}^j\\
{\bf z}^j
\end{array}
\right),
\end{equation}
where superscripts in the notation of components indicate, whether
the respective vector is decomposed
in the local basis near $\xi_j$ or near $\xi_{j+1}$. The $(n-1)\times(n-1)$
matrix $B^{\rm glob}_j$ presents the map $\psi_j$ in the local coordinates near
$\xi_j$ (i.e., the basis near $\xi_{j+1}$ is the same, as near $\xi_j$,
and the origin is shifted to $\xi_{j+1}$), and matrix $A^{\rm glob}_j$
relates the coordinates in the two local bases).
Lemma \ref{lecol} implies that the lower right
$n_j\times n_j$ block of $B^{\rm glob}_j$ related to the
contracting and transverse eigenvalues of $df(\xi_j)$ is diagonal, while
the lower left $n_j\times (n-n_j-1)$ block vanishes.
Matrix $A^{\rm glob}_j$ has two non-vanishing blocks. The upper left
$(n-n_j-1)\times (n-n_j-1)$ relate the coordinates ${\bf q}^{j+1}$ with coordinates
${\bf u}^j$ and the lower right $n_j\times n_j$ block is a permutation matrix.

Denote the superpositions of the local, $\phi_j$, and global, $\psi_j$, maps by
$\tilde g_j=\psi_j\circ\phi_j:\tilde H^{(in)}_j\to\tilde H^{(in)}_{j+1}$.
The Poincar\'e map
$\tilde H^{(in)}_1\to\tilde H^{(in)}_1$ for the cycle is the superposition
$\tilde\pi_1=\tilde g_m\circ\ldots\circ\tilde g_1$; for $j>1$ the Poincar\'e
maps $\tilde H^{(in)}_j\to\tilde H^{(in)}_j$ are constructed similarly:
$$\tilde\pi_j=\tilde g_{j-1}\circ\ldots\circ\tilde g_1\circ\tilde g_m
\circ\ldots\circ\tilde g_j.$$

Since ${\bf v}_0\gg{\bf q}$, in (\ref{phmap}) in the sum
$v_{0d}+P^{con}_dD_j{\bf q}$ the second term can be ignored.
The coordinates $(w,{\bf z})$ in the maps $\tilde g_j$ are
independent of $\bf q$. Hence, we can define maps $g_j$ which are
restrictions of the maps $\tilde g_j$ into the $(w,{\bf z})$-subspace:
\begin{equation}\label{eq_mapg0a}
g_j(w,{\bf z})=
A_jB_j
\left(
\begin{array}{c}
\{v_{0d}\}w^{-c_{j,d}/e_j}\\
\{z_sw^{-t_{j,s}/e_j}\}
\end{array}
\right).
\end{equation}
Here $A_j$ and $B_j$ are the lower right $n_j\times n_j$
corners of the matrices $A^{\rm glob}_j$ and $B^{\rm glob}_j$.
Recall that $B_j$ is diagonal and $A_j$ is a permutation matrix.

We call the set of maps $\{g_1^m\}=\{g_1,\ldots,g_m\}$, where
$g_j:\R^{n_j}\to\R^{n_j+1}$ have been constructed above,
{\it a collection of maps associated with the heteroclinic cycle}
$\cX=\{\xi_1,\ldots,\xi_m\}$. The collection of maps
$\{g_l^{l-1}\}=\{g_l,\ldots,g_m,g_1,\ldots,g_{l-1}\}$ is associated with
the heteroclinic cycle $\{\xi_l,\ldots,\xi_m,\xi_1,\ldots,\xi_{l-1}\}$
which geometrically coincides with the former cycle.

\subsection{Collection of maps: definitions of stability}
\label{stamap}

Given a collection of maps $\{g_1^m\}=\{g_1,\ldots,g_m\}$,
$g_j:\R^{N_j}\to\R^{N_{j+1}}$, we define
superpositions as follows
\begin{equation}\label{mapg}
\pi_j=g_{j-1}\circ\ldots\circ g_1\circ g_m\circ\ldots\circ g_{j+1}\circ g_j
\end{equation}
(for $j=1$ and $j=2$ this reduces to
$\pi_1=g_m\circ\ldots\circ g_2\circ g_1$ and
$\pi_2=g_1\circ g_m\circ\ldots\circ g_2$, respectively) and
\begin{equation}\label{sup2}
g_{(l,j)}=\left\{
\renewcommand{\arraystretch}{2.0}
\begin{array}{ll}
g_{l}\circ\ldots\circ g_j, & l>j\\
g_{l}\circ\ldots\circ g_m\circ g_1\circ\ldots\circ g_j, & l<j
\end{array}\right..
\end{equation}

We call ${\bf y}^1\in\R^{N_1}$ to be a fixed point of
the collection of maps $\{g_1^m\}$, if $\pi_1{\bf y}^1={\bf y}^1$. Evidently,
${\bf y}^l=g_{(l-1,1)}{\bf y}^1$ is then
a fixed point of the collection of maps
$\{g_l^{m}\}=\{g_l,\ldots,g_m,g_1,\ldots,g_{l-1}\}$.

\begin{definition}\label{def8}
We say that a fixed point ${\bf y}^1\in\R^{N_1}$ of a collection of maps
\begin{equation}\label{colm}
\{g_1^m\}=\{g_1,\ldots,g_m\},\ g_j:\R^{N_j}\to\R^{N_{j+1}},
\end{equation}
is \underline{asymptotically stable}, if for any $\delta>0$ there exists an
$\varepsilon>0$ such that for any $1\le l\le m$
$$d({\bf x},{\bf y}^l)<\varepsilon,\hbox{ where } {\bf y}^l=g_{(l-1,1)}{\bf y}^1,$$
implies
$$d(\pi_j^kg_{(j-1,l)}{\bf x},g_{(j-1,l)}{\bf y}^l)<\delta
\mbox{ for all }\ 1\le j\le m,\ k\ge0$$
and
$$\lim_{k\to\infty}d(\pi_j^kg_{(j-1,l)}{\bf x},g_{(j-1,l)}{\bf y}^l)=0
\mbox{ for all }1\le j\le m.$$
\end{definition}

\begin{definition}\label{def9}
We say that a fixed point ${\bf y}^1\in\R^{N_1}$ of a collection of maps $\{g_1^m\}$
is \underline{fragmentarily}\break\underline{asymptotically stable}, if
for any $\delta>0$
$$\mu(\cB_{\delta}(\{g_1^m\},{\bf y}^1))>0,$$
where
$$\cB_{\delta}(\{g_1^m\},{\bf y}^1):=
\{{\bf x}~:~{\bf x}\in\R^{N_1},\ d(\pi_j^kg_{(j-1,1)}{\bf x},
g_{(j-1,1)}{\bf y}^1)<\delta\mbox{ for all }1\le j\le m,\ k\ge0$$
$$\hbox{and }\lim_{k\to\infty}d(\pi_j^kg_{(j-1,1)}{\bf x},g_{(j-1,1)}{\bf y}^1)=0
\mbox{ for all }1\le j\le m\}.$$
\end{definition}

\begin{definition}\label{def91}
We say that a fixed point ${\bf y}^1\in\R^{N_1}$ of a collection of maps $\{g\}_1^m$ is
\underline{completely unstable}, if there exists $\delta>0$ such that
$$\mu(\cB_{\delta}(\{g_1^m\},{\bf y}^1))=0.$$
\end{definition}

\section{Stability of a cycle and a collection of maps}
\label{stabcm}

In this section we prove that a heteroclinic cycle of type Y is asymptotically
stable, or fragmentarily asymptotically stable, whenever the collection of maps
associated with the cycle has the property. We begin by proving several lemmas.

\begin{lemma}\label{le01}
Let $y(\tau)$ be a solution to the equation
\begin{equation}\label{eql0m}
\dot y(\tau)=Jy(\tau),\ y(\tau)\in\C^n,\ y(0)=y_0,
\end{equation}
where $J$ is a Jordan cell,
\begin{equation}\label{eql0}
J=\left(
\renewcommand{\arraystretch}{1.2}
\begin{array}{cccccc}
\lambda & 1 & 0 & ... & 0 & 0\\
0 & \lambda & 1 & ... & 0 & 0\\
. & . & . & ... & . & .\\
0 & 0 & 0 & ... & \lambda & 1\\
0 & 0 & 0 & ... & 0 & \lambda
\end{array}
\right)
\end{equation}
and $\Re(\lambda)<0$.
Then there exists a constant $K<\infty$, independent of $y_0$, such that
$$|y(\tau)|<Ke^{\Re(\lambda)\tau/2}|y_0|\hbox{ for any }\tau>0.$$
\end{lemma}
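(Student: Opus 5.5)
We write the solution explicitly and bound its norm. Write $J=\lambda I+N$, where $N$ is the nilpotent shift with ones on the superdiagonal, so that $N^n=0$. Since $\lambda I$ and $N$ commute,
$$y(\tau)=e^{J\tau}y_0=e^{\lambda\tau}\sum_{k=0}^{n-1}\frac{\tau^k}{k!}N^ky_0 .$$
The shift $N$ satisfies $|N^ky_0|\le|y_0|$ in the Euclidean norm, and $|e^{\lambda\tau}|=e^{\Re(\lambda)\tau}$. Therefore
$$|y(\tau)|\le e^{\Re(\lambda)\tau}p(\tau)|y_0|,\qquad p(\tau)=\sum_{k=0}^{n-1}\frac{\tau^k}{k!}.$$

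Next we split off half of the exponent:
$$e^{\Re(\lambda)\tau}p(\tau)=e^{\Re(\lambda)\tau/2}\cdot\bigl(e^{\Re(\lambda)\tau/2}p(\tau)\bigr).$$
Set $a=-\Re(\lambda)/2>0$. Each term $\tau^ke^{-a\tau}$ is continuous on $[0,\infty)$ and tends to $0$ as $\tau\to\infty$, so it is bounded. An explicit bound comes from maximising at $\tau=k/a$, which gives $\tau^ke^{-a\tau}\le (k/(ae))^k$, with the convention $0^0=1$.

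Hence $M=\sup_{\tau\ge0}e^{-a\tau}p(\tau)<\infty$, and $M$ depends only on $\lambda$ and $n$, not on $y_0$. Any $K>M$, for example $K=M+1$, gives the strict inequality $|y(\tau)|<Ke^{\Re(\lambda)\tau/2}|y_0|$ for $y_0\neq0$. If $y_0=0$ both sides vanish, and the inequality is understood as $\le$.

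There is no substantial obstacle here. The only point needing care is that the polynomial growth coming from the Jordan cell is absorbed by sacrificing half of the exponential decay rate, and this is why the statement uses $\Re(\lambda)/2$ rather than $\Re(\lambda)$.
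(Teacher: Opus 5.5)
Your proof is correct and follows the same route as the paper: write $e^{J\tau}$ as $e^{\lambda\tau}$ times the upper triangular matrix of $\tau^k/k!$, and absorb the polynomial growth by sacrificing half of the decay rate $\Re(\lambda)$. Your use of $\sup_{\tau\ge0}e^{\Re(\lambda)\tau/2}p(\tau)$ is tidier than the paper's argument: the paper's bound $n\tau^ne^{\Re(\lambda)\tau}|y_0|$ holds only for $\tau>1$ and its choice of $K$ does not properly cover $0\le\tau\le1$, and you rightly note that the strict inequality must be read as $\le$ when $y_0=0$.
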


\proof
The solution to (\ref{eql0m}) is
$$y(\tau)=e^{J\tau}y_0=e^{\lambda \tau}
\left(
\renewcommand{\arraystretch}{1.2}
\begin{array}{cccccc}
1 & \tau & {\tau^2\over 2!} & ... & {\tau^{n-2}\over(n-2)!} &
{\tau^{n-1}\over(n-1)!}\\
0 & 1 & \tau &  ... & {\tau^{n-3}\over(n-3)!}& {\tau^{n-2}\over(n-2)!}\\
. & . & . & ... & . & .\\
0 & 0 & 0 & ... & 1 & \tau\\
0 & 0 & 0 & ... & 0 & 1
\end{array}
\right)y_0.
$$
Therefore, for $\tau>1$ we have that
$$|y(\tau)|<n\tau^ne^{\Re(\lambda) \tau}|y_0|.$$
The equation $n\tau^ne^{\Re(\lambda) \tau/2}=1$, where $\Re(\lambda)<0$, has a
unique solution $\tau=\tau^*$, $0<\tau^*<\infty$, and
$n\tau^ne^{\Re\lambda \tau/2}<1$ for
any $\tau>\tau^*$. Taking $\tilde \tau=\max(1,\tau^*)$ and
$K=\max_{0\le \tau\le \tilde \tau}n\tau^ne^{\Re(\lambda) \tau/2}$
we prove the lemma.
\qed

\begin{lemma}\label{le02}
Consider functions $x(\tau):\R\to\R$, $y_j(\tau):\R\to\R$, $1\le j\le J$.
Suppose that
$$|x(\tau)|\le\tilde xe^{\alpha \tau},\ |y_j(\tau)|\le\tilde y_je^{\beta_j \tau},
\hbox{ where }\alpha>0,\ \beta_j<0,\ \hbox{ and }\tilde x<\tilde y_j
\hbox{ for any }1\le j\le J.$$
Denote $\tilde y=max_{1\le j\le J}\tilde y_j$,
$\beta=max_{1\le j\le J}\beta_j$ and $C=\tilde y^{\alpha/(\alpha-\beta)}$.
Then there exists $\tau^*>0$ such that
$$|x(\tau)|<C\tilde x^{-\beta/(\alpha-\beta)}\hbox{ for }\tau<\tau^*\hbox{ and }
|y_j(\tau)|<C\tilde x^{-\beta/(\alpha-\beta)}\hbox{ for }\tau>\tau^*
\hbox{ and any }1\le j\le J.$$
\end{lemma}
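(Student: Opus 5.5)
The plan is to take $\tau^*$ to be the time at which the two majorants balance, that is, the time where the growing bound $\tilde xe^{\alpha\tau}$ meets the decaying bound $\tilde ye^{\beta\tau}$. Both inequalities should then follow from monotonicity of exponentials, with no earlier result needed. Concretely, I set
$$\tau^*={\ln(\tilde y/\tilde x)\over\alpha-\beta}.$$
Since $\alpha>0>\beta$ we have $\alpha-\beta>0$. Since $\tilde x<\tilde y_j\le\tilde y$ we have $\ln(\tilde y/\tilde x)>0$. Hence $\tau^*>0$, as the statement requires.

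For $\tau<\tau^*$, because $\alpha>0$ the function $e^{\alpha\tau}$ is strictly increasing. Therefore
$$|x(\tau)|\le\tilde xe^{\alpha\tau}<\tilde xe^{\alpha\tau^*}=\tilde x\,(\tilde y/\tilde x)^{\alpha/(\alpha-\beta)}=\tilde y^{\alpha/(\alpha-\beta)}\tilde x^{1-\alpha/(\alpha-\beta)}=C\tilde x^{-\beta/(\alpha-\beta)}.$$
This holds for every $\tau<\tau^*$, including negative $\tau$.

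For $\tau>\tau^*$, first note that $\tau>0$. Combined with $\beta_j\le\beta$ and $\tilde y_j\le\tilde y$, this gives $\tilde y_je^{\beta_j\tau}\le\tilde ye^{\beta\tau}$. Since $\beta<0$, the function $e^{\beta\tau}$ is strictly decreasing, so
$$|y_j(\tau)|\le\tilde ye^{\beta\tau}<\tilde ye^{\beta\tau^*}=\tilde y\,(\tilde y/\tilde x)^{\beta/(\alpha-\beta)}=\tilde y^{\alpha/(\alpha-\beta)}\tilde x^{-\beta/(\alpha-\beta)}=C\tilde x^{-\beta/(\alpha-\beta)}.$$
This bound holds uniformly in $1\le j\le J$.

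There is no real obstacle here; the argument is elementary. The only points needing care are, first, checking that $\tau^*>0$, which is exactly where the hypothesis $\tilde x<\tilde y_j$ is used. Second, the reduction to the single worst pair $(\tilde y,\beta)$ is legitimate only for positive $\tau$, and that is why the positivity of $\tau^*$ matters. The exponent bookkeeping $1-\alpha/(\alpha-\beta)=-\beta/(\alpha-\beta)$ and $1+\beta/(\alpha-\beta)=\alpha/(\alpha-\beta)$ is routine.
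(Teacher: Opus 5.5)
Your proof is correct and is essentially the paper's argument: the paper also defines $\tau^*$ by $\tilde xe^{\alpha\tau^*}=\tilde ye^{\beta\tau^*}$ and concludes by monotonicity of the two exponentials. The only cosmetic difference is the order of steps for $y_j$: the paper first bounds $|y_j(\tau)|$ by $\tilde y_je^{\beta_j\tau^*}$ and then passes to $(\tilde y,\beta)$ at $\tau^*$, whereas you pass to the worst pair $(\tilde y,\beta)$ at $\tau$ first (and you correctly make that reduction a non-strict inequality).
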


\proof
Let $\tau^*$ be the solution to the equation
$$\tilde xe^{\alpha \tau}=\tilde ye^{\beta \tau}.$$
Since $\tilde x<\tilde y$, $\alpha>0$ and $\beta<0$, we have that $\tau^*>0$.
The positiveness of $\alpha$ implies
that for any $\tau<\tau^*$
$$|x(\tau)|<\tilde xe^{\alpha \tau^*}=
\tilde y^{\alpha/(\alpha-\beta)}\tilde x^{-\beta/(\alpha-\beta)}=
C\tilde x^{-\beta/(\alpha-\beta)}.$$
The exponents $\beta_j$ are negative, therefore for any $\tau>\tau^*$
$$|y_j(\tau)|< \tilde y_j e^{\beta_j \tau^*}<\tilde ye^{\beta \tau^*}=
\tilde y^{\alpha/(\alpha-\beta)}\tilde x^{-\beta/(\alpha-\beta)}=
C\tilde x^{-\beta/(\alpha-\beta)}.$$
\qed

\begin{lemma}\label{le03}
Consider a trajectory $\Phi(\tau,x_0)$ in the $\delta$-neighbourhood of a
heteroclinic cycle $X$ of type Y. Let $\delta<h$ and in the $h$-neighbourhood of
the equilibria the flow of (\ref{eq_ode}) can be approximated by
(\ref{lmap1}) and (\ref{lmap2}). Suppose that the trajectory intersects
$\tilde H^{in}_j$ at $\tau=\tau^{in}$, $\tilde H^{out}_j$ at $\tau=\tau^{out}$
and for $\tau^{in}\le \tau\le \tau^{out}$ it belongs to the $h$-neighbourhood
of $\xi_j$. The radial eigenvalues of $df(\xi_j)$ have negative
real parts and the contracting eigenvalues are negative.
Denote
$\Phi(\tau^{in},x_0)=x^{in}=({\bf u}^{in},{\bf v}^{in},w^{in},{\bf z}^{in})$
and
$\Phi(\tau^{out},x_0)=x^{out}=({\bf u}^{out},{\bf v}^{out},w^{out},{\bf z}^{out})$.
Then
\begin{itemize}
\item[I] $|w^{in}|<\varepsilon$ implies that
$\max(|{\bf u}^{out}|,|{\bf v}^{out}|)<C\varepsilon^{\gamma}$, where
$\gamma>0$ and $C<\infty$ are independent of $\varepsilon$.
\item[II] Suppose that the transverse eigenvalues of $df(\xi_j)$ are negative
and consider $\tilde x=\Phi(\tilde \tau,x_0)$, where
$\tau^{in}\le \tilde \tau\le \tau^{out}$. If
$d(\tilde x,\cX)<\varepsilon$ then
$d(\Phi(\tau,x_0),\cX)<C\varepsilon^{\gamma}$ for any
$\tilde \tau\le \tau\le \tau^{out}$, where $\gamma>0$ and $C<\infty$ are
independent of $\varepsilon$ and $\tilde \tau$.
\item[III] If
$d(x^{in},\cX)<\varepsilon$ and $d(x^{out},\cX)<\varepsilon$ then
$d(\Phi(\tau,x_0),\cX)<C\varepsilon^{\gamma}$ for any
$\tau^{in}\le \tau\le \tau^{out}$,
where $\gamma>0$ and $C<\infty$ are independent of $\varepsilon$.
\end{itemize}
\end{lemma}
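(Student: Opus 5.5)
Throughout I work in the scaled local coordinates near $\xi_j$ with the linear flow (\ref{lmap1})--(\ref{lmap2}), and I write $T=\tau^{out}-\tau^{in}$ for the passage time. The expanding coordinate satisfies $w(\tau)=w^{in}e^{e_j(\tau-\tau^{in})}$, and $w^{out}=1$ by the definition of $\tilde H^{out}_j$, so
\[
e^{e_jT}=1/|w^{in}|, \qquad T=-\ln|w^{in}|/e_j .
\]
Everything reduces to combining this growth of $w$ with decay estimates for the other coordinates.

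\textbf{Decay of the radial and contracting coordinates.} I first decompose $A^{rad}$ into real Jordan blocks. Applying lemma \ref{le01} to each block (complex pairs are handled by complexification) gives
\[
|{\bf u}(\tau)|\le K e^{-\rho(\tau-\tau^{in})}|{\bf u}^{in}|,
\]
where $\rho=\min_l|\Re r_{j,l}|/2>0$. Each contracting coordinate obeys $|v_d(\tau)|=|v_d^{in}|e^{c_{j,d}(\tau-\tau^{in})}$ with $c_{j,d}<0$. Since $|{\bf u}^{in}|,|{\bf v}^{in}|\le1$, evaluating at $\tau^{out}$ gives
\[
\max(|{\bf u}^{out}|,|{\bf v}^{out}|)\le K'\,|w^{in}|^{\gamma}, \qquad \gamma=\min(\rho,\min_d|c_{j,d}|)/e_j>0 .
\]
This is exactly part I.

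\textbf{Part II.} I bound $d(\cdot,\cX)$ near $\xi_j$, up to a constant, by the distance to the union of the two coordinate pieces that approximate $\kappa_{j-1}\cup\kappa_j$ in the ball: the segment $\{(0,0,w,0)\}$ and the set $P_{j-1}$ restricted to the ball. Near $\tilde x$ the transverse coordinates satisfy $|{\bf z}|\lesssim\varepsilon$, and since the transverse eigenvalues are negative they only decrease afterwards. There are two cases.
\begin{itemize}
\item[(a)] If $\tilde x$ is $\varepsilon$-close to the $w$-axis, then $|{\bf u}|,|{\bf v}|\lesssim\varepsilon$ at $\tilde\tau$. These coordinates decay from then on (up to the factor $K$), so the trajectory stays $C\varepsilon$-close to that axis.
\item[(b)] If $\tilde x$ is $\varepsilon$-close to $P_{j-1}$, then $|w(\tilde\tau)|\le\varepsilon$. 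The trajectory is then in the situation of lemma \ref{le02}, with $x=w$ (rate $\alpha=e_j$, amplitude $\tilde x=\varepsilon$) and $y$ running over the components of ${\bf u},{\bf v}$ (rates $\beta<0$, amplitude at most $K$). Lemma \ref{le02} gives a switching time $\tau^*$. Before $\tau^*$ we have $|w|<C\varepsilon^{-\beta/(\alpha-\beta)}$, so the point is close to $P_{j-1}$. After $\tau^*$ we have $|{\bf u}|,|{\bf v}|<C\varepsilon^{-\beta/(\alpha-\beta)}$, so the point is close to the $w$-axis.
\end{itemize}
Since $|{\bf z}|\le\varepsilon$ throughout, this yields II with $\gamma=-\beta/(e_j-\beta)$.

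\textbf{Part III.} Here the transverse eigenvalues may be positive, so ${\bf z}$ can grow, and this is the main obstacle. I use that each component $|z_s(\tau)|=|z_s^{in}|e^{t_{j,s}(\tau-\tau^{in})}$ is monotone in $\tau$. Hence its maximum over $[\tau^{in},\tau^{out}]$ is attained at an endpoint, and so it is at most $\varepsilon$ (up to a constant coming from the coordinate change).

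For ${\bf u},{\bf v},w$ I repeat the argument of case (b). Closeness of $x^{in}$ to $\cX$ gives $|w^{in}|\lesssim\varepsilon$ (the in-section point lies near $P_{j-1}$, not near the $w$-axis). Lemma \ref{le02} then handles the competition between growing $w$ and decaying ${\bf u},{\bf v}$, and the radial Jordan blocks are controlled by lemma \ref{le01}.

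The delicate points are two. First, I must justify that $d(\cdot,\cX)$ is comparable to $\min(|w|,|({\bf u},{\bf v})|)+|{\bf z}|$ inside $B_h(\xi_j)$, using that $\kappa_{j-1},\kappa_j$ are tangent to the eigen-directions at leading order. Second, I must keep all constants independent of $\varepsilon$ and $\tilde\tau$. I would handle both by fixing $h$ small first and then absorbing the geometric constants into $C$.
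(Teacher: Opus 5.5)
Your route is the paper's. Part I uses lemma \ref{le01} together with the passage time $e^{e_jT}=1/|w^{in}|$. Part II uses the switching time from lemma \ref{le02}. Part III uses the fact that each $|z_s|$ is monotone, so it is controlled by its values at $\tau^{in}$ and $\tau^{out}$. Part I and your handling of ${\bf z}$ are fine. The gap is the distance reduction you rely on in part II(b) and part III.

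\textbf{The reduction is false.} You claim that $d(\cdot,\cX)$ is bounded, up to a constant, by the distance to $\{(0,0,w,0)\}\cup(P_{j-1}\cap B_h)$. Equivalently, you claim it is comparable to $\min(|w|,|({\bf u},{\bf v})|)+|{\bf z}|$. This fails in the direction you need.
\begin{itemize}
\item Inside $B_h(\xi_j)$ the cycle consists only of a segment of the $w$-axis and the single curve $\kappa_{j-1}$.
\item $P_{j-1}$ has dimension $n_j^r+n_j^c$, which is at least two apart from degenerate cases; this holds at every equilibrium in section \ref{exa}.
\item In the scaled ball, $\kappa_{j-1}$ is not a coordinate direction. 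It is the orbit of $\dot{\bf u}=A^{rad}{\bf u}$, $\dot v_d=c_{j,d}v_d$ entering at a generic point $({\bf u}_0,{\bf v}_0)$ of the unit sphere.
\item Take a point $({\bf u},{\bf v},0,{\bf 0})\in P_{j-1}$ with $|({\bf u},{\bf v})|\approx 1/2$ lying off $\kappa_{j-1}$. Your right-hand side vanishes there, but the distance to $\cX$ is of order one.
\end{itemize}
So the step ``before $\tau^*$ the point is close to $P_{j-1}$'' does not give closeness to $\cX$. The ``delicate point'' you plan to justify cannot be justified.

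\textbf{The missing ingredient.} You need control of the $({\bf u},{\bf v})$-deviation from $\kappa_{j-1}$ itself. You discard this information once you extract $|w(\tilde\tau)|\le\varepsilon$. The fix is the one in (\ref{est3le3}) of the paper.
\begin{itemize}
\item Let $(\tilde{\bf u}_0,\tilde{\bf v}_0)$ be a point of $\kappa_{j-1}$ within $\varepsilon$ of $(\tilde{\bf u},\tilde{\bf v})$, and follow it along $\kappa_{j-1}$.
\item In (\ref{lmap1})--(\ref{lmap2}) the $({\bf u},{\bf v})$-equations are linear, stable and decoupled from $(w,{\bf z})$. Hence the difference obeys the same equations.
\item By lemma \ref{le01} the difference stays below $(K+1)\varepsilon$ for all $\tau\ge\tilde\tau$.
\item This gives $d(\Phi(\tau,x_0),\kappa_{j-1})\le (K+1)\varepsilon+e^{e_j\hat\tau}|\tilde w|+|{\bf z}(\tau)|$.
\end{itemize}
With this bound in place, your lemma \ref{le02} argument closes part II(b): $|w|$ is small before $\tau^*$, and $|{\bf u}|,|{\bf v}|$ are small after $\tau^*$. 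Combined with your endpoint bound on ${\bf z}$, the same argument closes part III.
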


\proof
I. According to (\ref{lmap1}) and (\ref{lmap2}), the coordinates ${\bf u}$ and ${\bf v}$ satisfy
$$\dot {\bf u}=A^{rad}{\bf u}\hbox{ and }\dot v_d=c_{j,d} v_d.$$
Denote by $r_j$ the maximal real part of the eigenvalues of $A^{rad}$.
The radial subspace can be splitted into a direct sum of generalised
eigenspaces of the matrix. Applying lemma \ref{le01} in each of these
subspaces we obtain that
\begin{equation}\label{est1le3}
|{\bf u}^{out}|<Ke^{r_j(\tau^{out}-\tau^{in})/2}|{\bf u}^{in}|.
\end{equation}
The contracting coordinates satisfy
\begin{equation}\label{est2le3}
v_d^{out}=e^{c_{j,d}(\tau^{out}-\tau^{in})}v_d^{in}<
e^{c_j(\tau^{out}-\tau^{in})}v_d^{in},
\end{equation}
where $c_j$ is the maximal contracting eigenvalue.
From $h=w^{in}e^{e_j(\tau^{out}-\tau^{in})}$ we obtain that
$e^{(\tau^{out}-\tau^{in})}=h^{1/e_j}(w^{in})^{-1/e_j}$.
Therefore, due to $|{\bf u}^{in}|<h$ and $|{\bf v}^{in}|<h$
$$|{\bf u}^{out}|<Kh^{1+r_j/2e_j}(w^{in})^{-r_j/2e_j}\hbox{ and }
v_d^{out}<h^{1+1/e_j}(w^{in})^{-c_j/r_j}.$$
Taking $\gamma=\min(-r_j/2e_j,-c_j/r_j)$ and
$C=\max(Kh^{1+r_j/2e_j},h^{1+1/e_j})$ we prove part I of the lemma.

\medskip\noindent
II. The condition $d(\tilde x,\cX)<\varepsilon$, where
$\tilde x\in B_h(\xi_j)$, implies that either
$d(\tilde x,\kappa_{j-1})<\varepsilon$ or $d(\tilde x,\kappa_j)<\varepsilon$.
In the latter case, due to (\ref{lmap1}), (\ref{lmap2}) and lemma \ref{le01} we have that
for $\tilde \tau<\tau<\tau^{out}$
$$d(\Phi(\tau,x_0),\kappa_j)<Ke^{r_j\hat\tau/2}|\tilde {\bf u}|
+e^{c_j\hat\tau}|\tilde {\bf v}|+e^{t_j\hat\tau}|\tilde {\bf z}|,$$
where $t_j$ is the maximal transverse eigenvalue,  $\hat\tau=\tau-\tilde\tau$, and
$(\tilde {\bf u},\tilde {\bf v},\tilde w,\tilde {\bf z})$ are the coordinates
of $\tilde x$. Since all exponents are negative, the statement of the
lemma holds true with $\gamma=1$ and $C=\max(1,K)$.

In the former case using  (\ref{lmap1}), (\ref{lmap2}) and lemma \ref{le01} we write
\begin{equation}\label{est3le3}
d(\Phi(\tau,x_0),\kappa_{j-1})<Ke^{r_j\hat\tau/2}|\tilde {\bf u}_0-\tilde{\bf u}|
+e^{c_j\hat\tau}|\tilde {\bf v}_0-\tilde{\bf v}|+e^{e_j\hat\tau}|\tilde w|+
e^{t_j\hat\tau}|\tilde {\bf z}|
\end{equation}
and
\begin{equation}\label{est4le3}
d(\Phi(\tau,x_0),\kappa_j)<Ke^{r_j\hat\tau/2}|\tilde {\bf u}|
+e^{c_j\hat\tau}|\tilde {\bf v}|+e^{t_j\hat\tau}|\tilde {\bf z}|.
\end{equation}
In the first inequality $(\tilde {\bf u}_0,\tilde {\bf v}_0)$ is the
nearest to $\tilde x$ point of $\kappa_{j-1}$.
In the expressions
(\ref{est3le3}) and (\ref{est4le3}) the last term is less then $\varepsilon$
because $|\tilde {\bf z}|<\varepsilon$ and the transverse eigenvalues are
negative. The first two terms in
(\ref{est3le3}) are less then $(K+1)\varepsilon$ because the radial
eigenvalues have negative real parts and the contracting ones are negative.

By lemma \ref{le02} there exists $\tau^*>\tilde \tau$ such that for $\tau>\tau^*$
$$Ke^{r_j\hat\tau/2}|\tilde {\bf u}|+e^{c_j\hat\tau}|\tilde {\bf v}|<C^*\varepsilon^{\gamma^*},$$
while for $\tilde\tau<\tau<\tau^*$
$$e^{e_j\hat\tau}|\tilde w|<C^*\varepsilon^{\gamma^*},$$
where $\gamma^*>0$ and $C^*$ is a constant, independent of $\varepsilon$.
Taking $C=C^*+K+1$ and $\gamma=\max(1,\gamma^*)$ we prove part II of
the lemma.

\medskip\noindent
III. Set $\tilde\tau=\tau^{in}$ and replace the last term in (\ref{est3le3})
and (\ref{est4le3}) by the sum
$$\sum_{1\le s\le n_t,\ t_{j,s}<0}e^{t_{j,s}(\tau-\tau^{in})}|z_s^{in}|+
\sum_{1\le s\le n_t,\ t_{j,s}\ge 0}e^{t_{j,s}(\tau-\tau^{out})}|z_s^{out}|,$$
where the coordinates corresponding to negative transverse eigenvalues
go to the first term and corresponding to the positive ones to the second.
By the conditions of the lemma at $\tau=\tau^{in}$ and $\tau=\tau^{out}$ both
sums are less then $\varepsilon$, implying that they are less then $\varepsilon$
at times between. The rest of the proof is the same as for part II.
\qed

\begin{theorem}\label{th1}
Let $\{g_1^m\}$, $g_j:\R^{n_j}\to\R^{n_{j+1}}$, be the collection of maps
associated with a type Y heteroclinic cycle $\cX$. For any $\xi_j\in \cX$ the
radial eigenvalues of $df(\xi_j)$ have negative real parts and the
contracting and transverse eigenvalues are negative. Then the cycle is
asymptotically stable if and only if the fixed point $(w,{\bf z})=\bf 0$
of the collection of maps is asymptotically stable.
\end{theorem}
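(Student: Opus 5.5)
We prove the two implications separately. The idea is to reduce the distance of a trajectory from $\cX$ to the size of its $(w,{\bf z})$ coordinates on the crossections $\tilde H^{(in)}_j$, and then to control the excursions between crossections using lemma~\ref{le03}.

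\emph{Step 1: the crossections.} On $\tilde H^{(in)}_j$, with coordinates $({\bf q},w,{\bf z})$, the distance to $\cX$ is comparable to $\max(|{\bf q}|,|w|,|{\bf z}|)$. Along $\tilde g_j$ the ${\bf q}$-component is harmless. By part I of lemma~\ref{le03}, $|w^{in}|<\varepsilon$ gives $\max(|{\bf u}^{out}|,|{\bf v}^{out}|)<C\varepsilon^\gamma$. The global map $\psi_j$ is a diffeomorphism that sends $\kappa_j$ to $\kappa_j$. Hence after one passage the ${\bf q}$-coordinate on $\tilde H^{(in)}_{j+1}$ is bounded by a power of $|(w,{\bf z})|$ on $\tilde H^{(in)}_j$, independently of the initial ${\bf q}$. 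The $(w,{\bf z})$-components of $\tilde g_j$ do not depend on ${\bf q}$ at leading order, so they evolve exactly by $g_j$ in~(\ref{eq_mapg0a}). Consequently, smallness of $\tilde g_{(\cdot)}$-iterates is equivalent to smallness of the $(w,{\bf z})$-iterates under the collection $\{g_1^m\}$, up to powers $C\varepsilon^\gamma$.

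\emph{Step 2: between crossections.} Near the connection $\kappa_j$ the flow is a bounded-time diffeomorphism, so distances change by at most a constant factor there. Inside $B_h(\xi_j)$, part III of lemma~\ref{le03} bounds $d(\Phi(\tau,x_0),\cX)$ by $C\varepsilon^\gamma$ whenever the entry and exit points are $\varepsilon$-close to $\cX$. Part II covers an initial point that starts inside $B_h(\xi_j)$, using that the transverse eigenvalues are negative. Chaining these bounds around the cycle gives the following: if the $(w,{\bf z})$-orbit of the collection stays within $\delta'$ of ${\bf 0}$ and converges to it, then the flow trajectory stays within $C\delta'^{\gamma}$ of $\cX$ and converges to $\cX$. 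The constants and exponents are finitely many (one per $j$), so they can be made uniform.

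\emph{Step 3: the two directions.} For ``if'': given $\delta$, pick $\delta'$ with $C\delta'^\gamma<\delta$. Let $\varepsilon'$ be the radius from definition~\ref{def8} for $\delta'$. Then choose $\varepsilon$ so that any point of $B_\varepsilon(\cX)$ reaches some $\tilde H^{(in)}_l$ with $(w,{\bf z})$ inside $B_{\varepsilon'}({\bf 0})$ without leaving $B_\delta(\cX)$. This uses part II of lemma~\ref{le03} for points inside $B_h(\xi_j)$ and continuity of the flow for points near a connection. For ``only if'': take $(w,{\bf z})$ near ${\bf 0}$ on some $\tilde H^{(in)}_l$ and ${\bf q}=0$. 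This is a point $\varepsilon$-close to $\cX$, so its trajectory stays $\delta$-close to $\cX$ and converges. Its successive crossings with the $\tilde H^{(in)}_j$ then have $(w,{\bf z})$-coordinates equal to the iterates of the collection, and these must remain small and tend to ${\bf 0}$. The index $l$ in definition~\ref{def8} matches the choice of crossection.

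\emph{Main obstacle.} The hard part is the uniformity in Step 1. The radial map $f^{rad}$ is not explicit, since $A^{rad}$ may have Jordan cells or complex eigenvalues. We must show that the ${\bf q}$-coordinate cannot accumulate over many turns. I would bound it via lemma~\ref{le01} as in part I of lemma~\ref{le03}. This gives $|{\bf u}^{out}|\le K h^{1+r_j/2e_j}|w^{in}|^{-r_j/2e_j}$, which does not depend on ${\bf u}^{in}$, so the radial deviation is reset on every passage. A second point is justifying the dropped higher-order terms from lemma~\ref{le03} for $m\ge1$ and from ${\bf q}\ll{\bf v}_0$; this holds for $h$ small.
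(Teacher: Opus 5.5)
Your proposal is correct and follows essentially the same route as the paper. On each $\tilde H^{(in)}_j$ you reduce to the $(w,{\bf z})$-coordinates, bound $({\bf u}^{out},{\bf v}^{out})$ by lemma~\ref{le03}I and ${\bf z}^{out}$ via the negative transverse eigenvalues, control the flow near connections by smooth dependence and inside $B_h(\xi_j)$ by lemma~\ref{le03}, and treat initial points off the crossections by following them to the first crossing. The differences are cosmetic: you track the ${\bf q}$-coordinate explicitly, and you use part III of lemma~\ref{le03} for the passages near equilibria where the paper uses part II; either works here since all transverse eigenvalues are negative.
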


\proof
Suppose the cycle is asymptotically stable. For a given $\delta>0$ denote
by $\varepsilon^*$ the value of $\varepsilon$ that satisfies definition
\ref{def1}. Consider
$x^{in}=({\bf u}^{in},{\bf v}^{in},w^{in},{\bf z}^{in})$ that belongs
to a crossection $\tilde H^{(in)}_j$ and suppose that
$|(w^{in},{\bf z}^{in})|<\varepsilon=\varepsilon^*/2$,
$|{\bf u}^{in}|<\varepsilon^*/4$ and $|{\bf v}^{in}|<\varepsilon^*/4$,
i.e., $|x^{in}|<\varepsilon^*$. A trajectory through $x^{in}$ remains
in the $\delta$-neighbourhood of the cycle for any positive $\tau$. In particular,
for any intersection with any $\tilde H^{(in)}_i$ at the point of intersection
the coordinates $w^{in}$ and ${\bf z}^{in}$ satisfy
$|(w^{in},{\bf z}^{in})|<\delta$ and in the limit $\tau\to\infty$ the norm
$|(w^{in},{\bf z}^{in})|$ vanishes. For small $\delta$ the collection of
maps accurately approximate the flow near the cycle. Therefore, the
fixed point of the collection of maps is asymptotically stable.

\medskip
Suppose that the fixed point of the collection of maps is asymptotically stable.
Let $\delta^*$ and $\varepsilon^*$ be the $\delta$ and $\varepsilon$ of
definition \ref{def8}. Consider
$x^{in}=({\bf u}^{in},{\bf v}^{in},w^{in},{\bf z}^{in})\in\tilde H^{(in)}_j$,
$d(x^{in},\cX)<\varepsilon^*$. Denote by $x^{in}_{ik}$ and $x^{out}_{ik}$
the $k$-th crossings of $\Phi(\tau,x^{in})$ with $\tilde H^{(in)}_i$ and
$\tilde H^{(out)}_i$, respectively, and by $\tau_{ik}^{(in)}$ and
$\tau_{ik}^{(out)}$ the times of the crossings. Asymptotic stability of
the collection of maps implies that
\begin{equation}\label{eqt11}
|(w^{in}_{ik},{\bf z}^{in}_{ik})|<\delta^*
\end{equation}
and
\begin{equation}\label{eqt12}
\lim_{\tau\to\infty}(w^{in}_{ik},{\bf z}^{in}_{ik})=0.
\end{equation}

By lemma \ref{le03}I for any $i$ and $k$
\begin{equation}\label{eqt13}
|({\bf u}^{out}_{ik},{\bf v}^{out}_{ik})|<C_1(\delta^*)^{\gamma_1},
\end{equation}
while from (\ref{lmap1})
\begin{equation}\label{eqt14}
|{\bf z}^{out}_{ik}|<C_2(\delta^*)^{\gamma_2},
\end{equation}
where $C_1,C_2<\infty$ and $\gamma_1,\gamma_2>0$. Hence,
\begin{equation}\label{eqt15}
d(x^{out}_{ik},\cX)<C_3(\delta^*)^{\gamma_3},
\end{equation}
where $C_3=C_1+C_2$ and $\gamma_3=\min(\gamma_1,\gamma_2)$.

Smooth dependence of $\Phi(\tau,x)$ on time and initial condition implies
existence of a constant $\tilde C$ such that
\begin{equation}\label{eqt16}
d(\Phi(\tau,x^{in}),\cX)<\tilde C d(\Phi(\tau^{out}_{ik},x^{in}),\cX)
<\tilde CC_3(\delta^*)^{\gamma_3}
\hbox{ for any }\tau^{out}_{ik}\le \tau\le \tau^{in}_{i+1,k}.
\end{equation}
Since (\ref{eqt15}) and (\ref{eqt16}) hold true for any $\tau^{in}_{ik}$
and $\tau^{out}_{ik}$, by lemma \ref{le03}II
\begin{equation}\label{eqt17}
d(\Phi(\tau,x^{in}),\cX)<C_4(\delta^*)^{\gamma_4}
\hbox{ for any }\tau^{in}_{ik}\le \tau\le \tau^{out}_{ik},
\end{equation}
where $\gamma_4=\gamma_3\hat\gamma$,
$C_4=\hat C(\max(\tilde CC_3,C_3))^{\hat\gamma}$ and $\hat C$ and $\hat\gamma$
are the constants of lemma \ref{le03}II.

Suppose that $\delta^*>0$ satisfies $\tilde CC_3(\delta^*)^{\gamma_3}<\delta$
and $C_4(\delta^*)^{\gamma_4}<\delta$.
Then by (\ref{eqt16}) and (\ref{eqt17}) a trajectory
through $x^{in}\in\tilde H^{(in)}_j$, $d(x^{in},\cX)<\varepsilon^*$,
for any $\tau>0$ remains in the $\delta$-neighbourhood of the cycle
is attracted to the cycle as $\tau\to\infty$.

Next we consider $x^{in}\not\in\tilde H^{(in)}_j$, $d(x^{in},\cX)<\varepsilon^*$.
Suppose that $x^{in}\not\in B_h(\xi_j)$ for any $j$ and $\kappa_i$ is
the nearest to $x^{in}$ connection of $\cX$. The flow $\Phi(\tau,x)$
near $\kappa_i$ between $\tilde H^{(out)}_i$ and $\tilde H^{(in)}_{i+1}$ can be
approximated by a linear map. Therefore, there exists a constant $K$ such that
\begin{equation}\label{eqt18}
d(\Phi(\tau^{in}_{i+1},x^{in}),\kappa_i)<Kd(x^{in},\kappa_i)
\end{equation}
where $\tau^{in}_{i+1}$ is the time when the trajectory crosses
$\tilde H^{(in)}_{i+1}$.

If $x^{in}\in B_h(\xi_j)$ for some $j$, then by lemma \ref{le03}II
\begin{equation}\label{eqt19}
d(\Phi(\tau^{out}_j,x^{in}),\cX)<C(d(x^{in},\cX))^{\gamma},
\end{equation}
where $\tau^{out}_j$ is the time when the trajectory crosses $\tilde H^{(out)}_j$.
Let $\varepsilon$ satisfies
$K\varepsilon<\varepsilon^*$ and $KC\varepsilon^{\gamma}<\varepsilon^*$
and $d(x^{in},\cX)<\varepsilon$.
Denote by $\tau^{in}$ the time of the first crossing of $\Phi(\tau,x^{in})$
with any of $\tilde H^{(in)}_j$.
From (\ref{eqt18}) and (\ref{eqt19}), at the crossing
$d(\Phi(\tau^{in},x^{in}),\cX)<\varepsilon^*$.
Hence, as proven above the trajectory stays in the neighbourhood of the
cycle and is attracted by the cycle at large $\tau$.
\qed

\begin{lemma}\label{le11}
Let $\cX$ be a heteroclinic cycle, comprised of equilibria and heteroclinic
trajectories, in a smooth dynamical system in $\R^n$.
If $\cX$ is fragmentarily asymptotically
stable then for any $\delta>0$ and any $n-1$-dimensional hyperplane
$H$ intersecting a connection $\kappa_j$
\begin{equation}\label{clem1}
\mu^{n-1}(H\cap \cB_{\delta}(\cX))>0,
\end{equation}
where $\mu^{n-1}$ is the Lebesgue measure in $\R^{n-1}$.
\end{lemma}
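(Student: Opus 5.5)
The plan is to argue by contradiction, using the flow to transport the basin through a transversal section, and to combine this with the invariance of $\cB_{\delta}(\cX)$ under the forward flow. Fix $\delta>0$ and a hyperplane $H$ intersecting $\kappa_j$ at a point $p$. We may assume $H$ is transverse to $\kappa_j$ at $p$, which is implicit for a cross-section, so that $f(p)\notin T_pH$. Suppose, contrary to (\ref{clem1}), that $\mu^{n-1}(H\cap\cB_{\delta'}(\cX))=0$ for some $\delta'\le\delta$; it suffices to treat small $\delta'$, since $\cB_{\delta'}\subset\cB_{\delta}$. We will derive $\mu(\cB_{\delta''}(\cX))=0$ for some $\delta''>0$, which contradicts fragmentary asymptotic stability.

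First, every trajectory starting in $\cB_{\delta''}(\cX)$ with $\delta''$ small passes near $p$ and crosses a small disc $H_\rho=H\cap B_\rho(p)$. Indeed, the trajectory converges to $\cX$ and, by the hyperbolicity of the $\xi_i$ and Lemma~\ref{le03}, it must follow the connections in order. It cannot converge to a single equilibrium $\xi_i$: that would place it on the stable manifold of $\xi_i$, a lower-dimensional, hence measure-zero, set. We may discard that set, together with the countable union of such sets, since we only care about measure. So, up to a null set, every point of $\cB_{\delta''}(\cX)$ has a forward orbit that enters a tubular neighbourhood of $\kappa_j$ close to the connection. By transversality, it then crosses $H_\rho$ at some time $\tau\ge0$, and at the crossing it is still within $\delta'$ of $\cX$. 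The points after the crossing stay in $\cB_{\delta'}$ because $\cB$ is forward invariant: $\delta$-closeness holds for all $\tau\ge0$ and the limit is unchanged. Hence
$$\cB_{\delta''}(\cX)\setminus N\subset\bigcup_{\tau\in\R}\Phi(-\tau,\,H_\rho\cap\cB_{\delta'}(\cX)),$$
up to a null set $N$. Here we use that the flow gives a local bijection between time and position near the section, extended backward by the global flow.

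Next, the map $F:(\tau,y)\mapsto\Phi(-\tau,y)$ from $\R\times H_\rho$ to $\R^n$ is smooth, hence locally Lipschitz. Consequently $F$ maps the set $\R\times(H_\rho\cap\cB_{\delta'})$, which has $\mu^n$-measure zero by Fubini, onto a set of Lebesgue measure zero. To carry this out, write $\R$ as a countable union of compact intervals, on each of which $F$ is Lipschitz on compact pieces; Lipschitz maps between spaces of equal dimension send null sets to null sets. It follows that $\mu(\cB_{\delta''}(\cX))=0$, which is the contradiction.

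The main obstacle is the first step: justifying that almost every point of a small basin actually crosses the particular disc $H_\rho$, rather than approaching $\cX$ without passing near $p$. I would handle it by observing that convergence to $\cX$ without convergence to an equilibrium forces infinitely many visits near each connection, by the cyclic structure and the local maps near the $\xi_i$. Passage close to $\kappa_j$ then yields a crossing of $H$ near $p$ by continuity of the flow along the compact segment of $\kappa_j$ through $p$.
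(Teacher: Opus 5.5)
Your argument is correct and follows the paper's route: you show that, up to a null set, $\cB_{\delta}(\cX)$ lies in the flow-out of the section set $H\cap\cB_{\delta}(\cX)$, and that such a flow-out is a countable union of smooth images of sets $[C,0]\times Q$ with $Q$ null in $H$, hence has measure zero. You are more careful than the paper on two points: the paper assumes without comment that every basin point crosses $H$, whereas you justify this (discarding the null stable manifolds of the $\xi_i$ and taking $H$ transverse to $\kappa_j$); and the paper's split into $\cB^{fin}$ and $\cB^{inf}$ is unnecessary in your version, since the whole flow-out is null.
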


\proof
Denote $Q=H\cap \cB_{\delta}(\cX)$ and suppose that $\mu^{n-1}Q=0$.
The set $\cB_{\delta}(X)$ can be represented as a union of segments
of trajectories going through points $x_0\in Q$ at $\tau=0$ in the direction
of negative $\tau$,
$$\cB_{\delta}(X)=\cup_{x_0\in Q}\Phi([\tau^{-},0],x_0),$$
where $\Phi([\tau^{-},0],x_0)$ is the segment of trajectory
$\Phi(\tau,x_0)$ between $\tau^{-}<0$ and $0$.
At time $\tau=\tau^{-}$ the trajectory either leaves the $\delta$-heighbourhood
of $\cX$, or it crosses $H$, or $\tau^{-}=-\infty$ and the trajectory
belongs to the unstable manifold of one of $\xi_j$. Respectively, we
split $\cB_{\delta}(\cX)=\cB^{fin}+\cB^{inf}$, where the set $\cB^{fin}$ is
comprised of trajectories with finite $\tau^{-}$ and the set $\cB^{inf}$ of
the ones with infinite $\tau^{-}$. The measure of $\cB^{inf}$ is zero,
because the unstable manifolds of $\xi_j$ have zero measure. For a $C<0$ denote
$$\cB_C=\cup_{x_0\in Q,\ \tau^{-}>C}\Phi([\tau^{-},0],x_0).$$
For any value of $C$ the measure of $\cB_C$ is zero. Hence,
$$\mu\cB^{fin}=\lim_{C\to-\infty}\cB_C=0.$$
Therefore, $\mu\cB_{\delta}(\cX)=\mu\cB^{fin}+\mu\cB^{inf}=0$, which contradicts to
the conditions of the lemma.
\qed

\begin{theorem}\label{th2}
Let $\{g_1^m\}$, $g_j:\R^{n_j}\to\R^{n_{j+1}}$, see (\ref{eq_mapg0a}),
be the collection of maps associated with a type Y heteroclinic cycle $\cX$.
Suppose that the diagonal matrices $B_j$ are non-degenerate.
\footnote
{The entries of $B_j$ depend on $h$, the size of the neighbourhoods of $\xi_j$
employed in the construction of local and global maps.
Since the condition that $B_j$ are non-degenerate generically holds true,
we can assume $h$ to be chosen such that the property is satisfied.}
The cycle is fragmentarily
asymptotically stable if and only if the fixed point $(w,{\bf z})=\bf 0$
of the collection of maps is fragmentarily asymptotically stable.
\end{theorem}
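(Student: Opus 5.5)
We prove the two implications separately, following the structure of the proof of Theorem~\ref{th1}. The new ingredients are Lemma~\ref{le11}, which reduces fragmentary asymptotic stability of the flow to positive $(n-1)$-dimensional measure on a crossection, and a Fubini-type argument that passes from $\tilde H^{(in)}_j$ (coordinates $({\bf q},w,{\bf z})$) to its projection on the $(w,{\bf z})$-subspace. Throughout we use that the $(w,{\bf z})$-components of $\tilde g_j$ are independent of ${\bf q}$ to leading order, so $\tilde g_j$ is a skew product over $g_j$.

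\emph{Cycle $\Rightarrow$ maps.} Assume $\cX$ is fragmentarily asymptotically stable and fix $\delta>0$.
\begin{itemize}
\item Apply Lemma~\ref{le11} with $H=\tilde H^{(in)}_1$. This gives $\mu^{n-1}(\tilde H^{(in)}_1\cap\cB_\delta(\cX))>0$.
\item Every point of this set generates a trajectory that stays $\delta$-close to $\cX$ and converges to it. Hence its successive returns to the crossections satisfy $|(w,{\bf z})|<\delta$ and $(w,{\bf z})\to0$.
\item Since the $(w,{\bf z})$-part of $\tilde g_j$ is $g_j$ (up to higher-order terms, negligible for small $\delta$, as in Theorem~\ref{th1}), the projection of this set onto the $(w,{\bf z})$-coordinates lies in $\cB_{\delta'}(\{g_1^m\},{\bf 0})$ for some $\delta'$ comparable to $\delta$.
\item The projection has positive $n_1$-dimensional measure. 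Otherwise, by Fubini, the set itself, which lies inside (projection)$\times\{{\bf q}:|{\bf q}|<\delta\}$, would have zero $(n-1)$-measure.
\end{itemize}
Since $\delta$ is arbitrary, ${\bf 0}$ is fragmentarily asymptotically stable for the collection.

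\emph{Maps $\Rightarrow$ cycle.} Assume $\mu(\cB_{\delta^*}(\{g_1^m\},{\bf 0}))>0$ for every $\delta^*$. Given $\delta$, choose $\delta^*$ as in the second half of the proof of Theorem~\ref{th1}, so that $\tilde CC_3(\delta^*)^{\gamma_3}<\delta$ and $C_4(\delta^*)^{\gamma_4}<\delta$. Consider
$$S=\{({\bf q},w,{\bf z})\in\tilde H^{(in)}_1:\ |{\bf q}|<\delta^*,\ (w,{\bf z})\in\cB_{\delta^*}(\{g_1^m\},{\bf 0})\}.$$
It has positive $(n-1)$-measure as a product set. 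For $x\in S$, the $(w,{\bf z})$-coordinates at all subsequent crossings stay below $\delta^*$ and tend to $0$. Lemma~\ref{le03}I, estimate (\ref{eqt16}) and Lemma~\ref{le03}II then show, exactly as in Theorem~\ref{th1}, that the trajectory stays in $B_\delta(\cX)$ and converges to $\cX$; therefore $S\subset\cB_\delta(\cX)$.

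Flowing $S$ for $\tau\in(-\tau_0,0)$, with $\tau_0$ small, gives a set that is diffeomorphic to $S\times(-\tau_0,0)$, because the flow is transverse to $\tilde H^{(in)}_1$ near $\kappa_m$. This set has positive $n$-measure. It is contained in $\cB_{\delta+o(1)}(\cX)$, so its measure bounds $\mu(\cB_{\delta+o(1)}(\cX))$ from below. Since $\delta$ is arbitrary, the cycle is fragmentarily asymptotically stable.

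\emph{Main obstacle.} The delicate step is justifying that the leading-order maps $g_j$ faithfully represent the $(w,{\bf z})$-dynamics on a set of positive measure. The discarded terms (the ${\bf q}$-correction $P^{con}_dD_j{\bf q}$ and the factors $1+q(\cdot)$ from Lemma~\ref{lecol}) perturb the basin. Measure positivity must also be preserved under the maps, and this is where non-degeneracy of $B_j$ enters: with $B_j$ diagonal and invertible and $A_j$ a permutation, each $g_j$ is, away from the hyperplanes $w=0$, a local diffeomorphism onto its image. Hence $g_j$ maps null sets to null sets and, conversely, preimages of positive-measure sets have positive measure. I would first use this to transport positive measure between different crossections $\tilde H^{(in)}_j$, which is needed because Definition~\ref{def9} involves all $\pi_j$. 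I would then control the higher-order corrections by shrinking $h$ and $\delta$, arguing that the perturbation is a near-identity change of coordinates on each crossection. If a direct perturbation argument fails, the fallback is to treat the true maps $\tilde g_j$ as $C^1$-close conjugates on the relevant region and to use that such conjugacies preserve positivity of Lebesgue measure.
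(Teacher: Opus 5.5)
Your first implication (cycle $\Rightarrow$ maps) is essentially the paper's argument. You apply Lemma~\ref{le11} on $\tilde H^{(in)}_1$ and then project to the $(w,{\bf z})$-coordinates, and your Fubini remark supplies a justification the paper leaves implicit. The gap is in the converse.

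You claim that Lemma~\ref{le03}I, estimate (\ref{eqt16}) and Lemma~\ref{le03}II show, ``exactly as in Theorem~\ref{th1}'', that trajectories through $S$ stay in $B_\delta(\cX)$. That part of the proof of Theorem~\ref{th1} rests on the hypothesis that all transverse eigenvalues are negative. This hypothesis is what gives the bound (\ref{eqt14}) on ${\bf z}^{out}$ directly from (\ref{lmap1}), and it is the hypothesis of Lemma~\ref{le03}II. Theorem~\ref{th2} assumes nothing of the kind. Moreover, the situation the theorem is actually needed for (Theorem~\ref{th_4} with $L>0$, Example~2 with $\lambda_{13}>0$) is precisely the one where some $t_{j,s}>0$. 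For such a coordinate, $z_s^{out}=z_s^{in}(w^{in})^{-t_{j,s}/e_j}$ is not bounded in terms of the entry data $|(w^{in},{\bf z}^{in})|<\delta^*$. So nothing in your argument prevents the trajectory from leaving the $\delta$-neighbourhood of $\cX$ during the passage near $\xi_j$, and the inclusion $S\subset\cB_\delta(\cX)$ is not established.

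The missing idea is exactly where the non-degeneracy of $B_j$ enters. The ${\bf z}$-part of the global map $\psi_j$ is the diagonal $B_j$ followed by a permutation. Invertibility therefore gives $|{\bf z}^{out}_{jk}|\le C^*|{\bf z}^{in}_{j+1,k}|<C^*\delta^*$ with $C^*=\max_{i,j}1/|b^j_{ii}|$. In words, the exit value near $\xi_j$ is controlled by the \emph{next} entry value, which is small because the orbit lies in the basin of the collection of maps. The argument then runs as follows:
\begin{itemize}
\item Combined with Lemma~\ref{le03}I for $({\bf u}^{out},{\bf v}^{out})$, this makes $d(x^{out}_{jk},\cX)$ small.
\item Smoothness of the flow along $\kappa_j$ then makes $d(x^{in}_{j+1,k},\cX)$ small.
\item Lemma~\ref{le03}III (not II) controls the whole passage through $B_h(\xi_j)$, because it uses entry and exit data together and bounds growing transverse coordinates by their exit values.
\end{itemize}

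The role you assign instead to non-degeneracy of $B_j$ is transporting positive measure between cross-sections. This is not needed, since the set in Definition~\ref{def9} is measured in $\R^{N_1}$ only. Nor is $g_j$ a local diffeomorphism in general: it maps a space of dimension $1+n^t_j$ to one of dimension $n^c_j+n^t_j$. The rest of your converse, the product set $S$ and its thickening along the flow, matches the paper's set $W$.
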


\proof
Suppose that the cycle is fragmentarily asymptotically stable. For a
given $\delta>0$ denote
$$Q=P_{w,{\bf z}}(\tilde H_1^{(in)}\cap\cB_{\delta}(\cX)),$$
where $P_{w,{\bf z}}$ is the projection on the subspace spanned by
$w$ and ${\bf z}$. Due to lemma \ref{le11} the measure $\mu^{n-1}$ of the set
$\tilde H_1\cap\cB_{\delta}(X)$ is positive. Therefore, $\tilde\mu Q>0$,
where $\tilde\mu$ denotes a measure in the subspace $(w,{\bf z})$.
The collection of maps accurately approximates trajectories in the vicinity of
the cycle, at each intersection of $\tilde H_j^{(in)}$ the trajectories stay in
the neighbourhood of the cycle and are attracted as $\tau\to\infty$. Therefore,
$Q\subset\cB_{\delta}(\{g_1^m\},{\bf y}^1)$.

\medskip
Suppose the fixed point of the collection of maps is fragmentarily
asymptotically stable. Denote $Q=\cB_{\delta^*}(\{g_1^m\},{\bf y}^1)$ and
$$W=\{~({\bf u},{\bf v},w,{\bf z})\ :\ |{\bf u}-{\bf u}_0|<\delta^*,\
|{\bf v}-{\bf v}_0|<\delta^*,\ (w,{\bf z})\in Q\},$$
where the coordinates are taken in the basis comprised of eigenvectors of
$df(\xi_1)$ and $({\bf u}_0,{\bf v}_0)$ are the coordinates of the intersection
of $\kappa_m$ with $\tilde H_1^{(in)}$. Consider a trajectory $\Phi(\tau,x_0)$
through $x_0\in W$. Asymptotic stability
of the collection of maps implies that at the $k$-th intersection with
$\tilde H_j^{(in)}$ we have that
\begin{equation}\label{th21}
|(w^{in}_{jk},{\bf z}^{in}_{jk})|<\delta^*\hbox{ for all }
1\le j\le m\hbox{ and }1\le k<\infty.
\end{equation}
Lemma \ref{le03}I implies that
\begin{equation}\label{th22}
|{\bf u}^{out}_{jk},{\bf v}^{out}_{jk}|<C_1(\delta^*)^{\gamma_1},
\end{equation}
where $C_1$ and $\gamma_1$ are the maximal over $j$ values of $C$ and $\gamma$.

Let
$$C^*=\max_{1\le i\le n_j,\ 1\le j\le m}{1\over |b_{ii}^j|},$$
where $b_{ii}^j$ are the entries of $B_j$. Since the matrices are
non-degenerate, $C^*$ is finite. The global maps are predominantly
linear, therefore
\begin{equation}\label{th23}
|{\bf z}^{out}_{jk}|<C^*|{\bf z}^{in}_{j+1,k}|<C^*\delta^*.
\end{equation}
From (\ref{th22}) and (\ref{th23}) we have that
\begin{equation}\label{th24}
d(x^{out}_{jk},\cX)<C_2(\delta^*)^{\gamma_2},
\end{equation}
where $x^{out}_{jk}$ is the $k$-th intersection of $\Phi(\tau,x_0)$ with
$\tilde H^{(out)}_j$, $C_2=C_1\max(1,C^*)$ and $\gamma_2=\min(1,\gamma_1)$.

Since $\Phi(\tau,x)$ is a smooth function of time and initial condition,
there exists a constant $\hat C$ such that
\begin{equation}\label{th25}
d(\Phi(\tau,x_0),\cX)<\hat Cd(x^{out}_{jk},\cX)<\hat CC_2(\delta^*)^{\gamma_2}
\hbox{ for any }\tau^{out}_{jk}\le \tau\le \tau^{in}_{j+1,k}.
\end{equation}
In particular,
\begin{equation}\label{th26}
d(x^{in}_{j+1,k},\cX)<\hat CC_2(\delta^*)^{\gamma_2}.
\end{equation}

The inequalities (\ref{th24}) and (\ref{th26}) hold true for any $1\le j\le m$
and $k>0$. Applying lemma \ref{le03}III to $x^{in}_{jk}$ and $x^{out}_{jk}$,
we obtain that
\begin{equation}\label{th27}
d(\Phi(\tau,x_0),\cX)<C_3(\delta^*)^{\gamma_3}
\hbox{ for any }\tau^{in}_{jk}\le \tau\le \tau^{out}_{jk},
\end{equation}
where $\gamma_3=\gamma_2\tilde\gamma$,
$C_3=\tilde C(\max(\hat CC_2,C_2))^{\tilde\gamma}$ and $\tilde C$ and
$\tilde\gamma$ are the constants of lemma \ref{le03}III.

From (\ref{th25}) and (\ref{th27}), if $\delta^*$ satisfies
$\hat CC_2(\delta^*)^{\gamma_2}<\delta$ and $C_3(\delta^*)^{\gamma_3}<\delta$
then the trajectory $\Phi(\tau,x_0)$ stays in the
$\delta$-neighbourhood of $\cX$. Since for $k\to\infty$ the values of
$(w^{in}_{jk},{\bf z}^{in}_{jk})$ tend to zero, the trajectory is approaching
the cycle for $\tau\to\infty$.

We have proved that $W\subset\cB_{\delta}(\cX))$. Since
$\mu(W)=(\delta^*)^{n_1^r+n_1^c}\tilde\mu(Q)>0$ the cycle $\cX$ is f.a.s.
\qed

\section{Stability of fixed points of a collection of maps}\label{smaps}

The main results of this section are theorems \ref{th_3} and \ref{th_4}
that prove necessary and sufficient conditions for asymptotic stability
and fragmentary asymptotic stability of a fixed point of a collection of
maps. The presentation follows \cite{op12} where the stability was studied for
a collection of maps associated with a type Z heteroclinic cycle.
The stability depends on the eigenvalues and eigenvectors of transition
matrices that are products of basic transition matrices. The basic transition
matrices for cycles of types Z and Y differ, however the main results for
type Z cycles hold true for type Y cycles. For lemma \ref{lem_2} and
theorems \ref{th_22}-\ref{th_4} no proofs are given since they can be obtained
from the ones of \cite{op12} by minor modifications.

\subsection{Transition matrix}

In the coordinates \mbel,
\begin{equation}\label{newc}
\mbe=(\ln|w|,\ln|z_1|,...,\ln|z_{n_j^t}|),
\end{equation}
the maps $g_j$, that we denote by ${\cal M}_j$, are
\begin{equation}\label{fmap}
{\cal M}_j\mbe=M_j\mbe+F_j,
\end{equation}
where
\begin{equation}\label{esm}
M_j:=A_jB_j=A_j\left(
\begin{array}{ccccc}
b_{j,1}&0&0&\ldots&0\\
.&.&.&\ldots&.\\
b_{j,n_j^c}&0&0&\ldots&0\\
b_{j,n_j^c+1}&1&0&\ldots&0\\
b_{j,n_j^c+2}&0&1&\ldots&0\\
.&.&.&\ldots&.\\
b_{j,n_{j+1}}&0&0&\ldots&1
\end{array}
\right)
\end{equation}
are the {\em basic transition matrices} of the maps. Here $A_j$ is
an $n_{j+1}\times n_{j+1}$ permutation matrix and $B_j$ is an
$n_{j+1}\times n_j$ matrix. (Recall that $n_{j+1}=n_{j+1}^t+1=n_j^c+n_j^t$.)
The entries $b_{j,l}$ of the matrix $B_j$ depend on the eigenvalues of the
linearisation $df(\xi_j)$ of (\ref{eq_ode}) near $\xi_j$ as follows
\begin{equation}\label{coeB}
b_{j,d}=c_{j,d}/e_j,\ 1\le d\le n_j^c,\mbox{ and }
b_{j,l+n_j^c}=-t_{j,l}/e_j,\ 1\le l\le n_j^t,\ 1\le j\le m.
\end{equation}
We call $\{{\cal M}_1^m\}$, as $\{g_1^m\}$, a collection of
maps associated with the heteroclinic cycle. A fixed point $(w,{\bf z})=\bf 0$
of the collection $\{g_1^m\}$ becomes a fixed point $\mbe=-\mbi$
of the collection $\{{\cal M}_1^m\}$. In the study of stability of the point
$(w,{\bf z})=\bf 0$ we consider asymptotically small $w$ and $\bf z$, i.e.,
asymptotically large negative \mbel, and hence finite $F_j$ can be ignored.

Transition matrices of the superposition of maps $\pi_j$ and $g_{j,l}$
are the products $M^{(j)}=M_{j-1}\ldots M_1M_m\ldots M_{j+1}M_j$ and
$M_{j,l}=M_j\ldots M_1M_m\ldots M_{l+1}M_l$ (or $M_j\ldots M_l$ if $j>l$),
respectively. The matrices $M^{(j)}$ are square of the size
$n_j\times n_j$, while $M_{j,l}$ are of the size
$n_{j+1}\times n_l$.
The matrices $M^{(j)}$ are of the same rank. As proven in appendix A, in
general, the rank is $\min_{1\le j\le m}n_j$.

\subsection{Two types of eigenvalues of a transition matrix}
\label{twot}

Consider a matrix $M:=M^{(1)}=M_m\ldots M_1:\R^N\to\R^N$; it is
a product of the basic transition matrices of the form (\ref{esm}).
We separate
the coordinate vectors ${\bf e}_l$, $1\le l\le N$, into two groups.
A vector ${\bf e}_l$ belongs to the second group if
$A_jA_{j-1}...A_2A_1{\bf e}_l$ is a transverse eigenvector of $df(\xi_j)$ for
any $j$, and to the first one otherwise.
Denote by $V^{\rm sig}$ and $V^{\rm ins}$ the subspaces spanned
by vectors from the first and second group, respectively (the superscripts
``ins'' and ``sig'' stand for significant and insignificant).

\begin{theorem}\label{th_22} (Theorem 3 in \cite{op12}.)
Let $V^{\rm sig}$ and $V^{\rm ins}$ be the subspaces defined above.

\begin{itemize}
\item[(a)] The subspace $V^{\rm ins}$ is $M$-invariant and the absolute value
of all eigenvalues associated with the eigenvectors from this subspace is one.
\item[(b)] Generically all components of eigenvectors that do not
belong to $V^{\rm ins}$ are non-zero.
\end{itemize}
\end{theorem}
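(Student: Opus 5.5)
We track how the coordinate directions move under the basic transition matrices. By (\ref{esm}), $M_j=A_jB_j$. Only the first column of $B_j$ (the one acting on the expanding coordinate $\ln|w|$) is a general column $(b_{j,1},\ldots,b_{j,n_{j+1}})^T$. Its remaining columns are unit vectors, sending the $s$-th transverse coordinate of $\xi_j$ to the position of the $s$-th transverse eigenvector inside $P_j^{\perp}$, and $A_j$ then permutes this position into the basis of $\xi_{j+1}$. Consequently, if ${\bf e}_l$ corresponds at stage $j$ to a transverse direction of $df(\xi_j)$, then $M_j{\bf e}_l=A_j{\bf e}_{l'}$ is again a single coordinate vector. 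In other words, transverse directions are carried to coordinate vectors without any mixing.

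For part (a) we take ${\bf e}_l\in V^{\rm ins}$. By definition $A_{j}\cdots A_1{\bf e}_l$ is transverse at every step, so by induction on $j$ we get $M_j\cdots M_1{\bf e}_l={\bf e}_{\sigma(l)}$ for a single index $\sigma(l)$. Applying this for $j=m$ gives $M{\bf e}_l={\bf e}_{\sigma(l)}$, where $\sigma(l)$ is the index obtained after a full turn of the permutations. Next we check that ${\bf e}_{\sigma(l)}$ again belongs to the second group. At each stage, the set of coordinates that have remained transverse so far is mapped injectively, and the composed permutation $A_m\cdots A_1$ acts on it. Since the definition of the group is invariant under shifting the starting point by one full cycle, we obtain $\sigma(V^{\rm ins})\subset V^{\rm ins}$, and injectivity makes this an equality. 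Hence $M|_{V^{\rm ins}}$ is a permutation matrix. Its eigenvalues are roots of unity, so they all have absolute value one. The main obstacle here is the bookkeeping needed to show that $\sigma$ maps the second group to itself. I would set it up by considering the infinite periodic itinerary of each coordinate and observing that the second group consists exactly of the indices whose periodic itinerary never hits a contracting or expanding slot; this property is clearly preserved by the shift.

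For part (b) we work with the complementary block. Order the basis so that $V^{\rm ins}$ comes last. Then $M$ is block triangular: $V^{\rm ins}$ is invariant, and the diagonal block $M^{\rm sig}$ acts on the quotient $V^{\rm sig}$. Take an eigenvector $x$ whose eigenvalue $\lambda$ is not an eigenvalue of the permutation block. Its $V^{\rm sig}$-projection is an eigenvector of $M^{\rm sig}$, and its $V^{\rm ins}$-part is then uniquely determined by $(\lambda I-P)^{-1}$ applied to the coupling terms. The entries of $M^{\rm sig}$ are polynomials in the $b_{j,l}$, and every significant coordinate passes through an expanding slot at some stage, so the corresponding column of $M^{\rm sig}$ picks up a full column $(b_{j,\cdot})$. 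Each component of $x$ is therefore, up to normalisation, an algebraic function of the parameters $b_{j,l}$. Vanishing of a given component defines a proper analytic subset of parameter space provided it is not identically zero. To rule out identical vanishing, it suffices to exhibit one parameter choice, for example all $b_{j,l}$ nonzero with generic values, or a direct computation in which every entry of the relevant columns is nonzero, for which the component is nonzero. Since finitely many conditions are involved, the conclusion holds generically. As indicated in the text, the detailed computation follows that of Theorem 3 in \cite{op12} with the modified shape of $B_j$.
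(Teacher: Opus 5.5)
Your part (a) is correct and is the standard argument. The transverse columns of $B_j$ are unit vectors, so $M$ sends each ${\bf e}_l\in V^{\rm ins}$ to a single ${\bf e}_{\sigma(l)}$. Reading the second group through the infinite periodic itinerary, i.e.\ as the periodic points of the injective partial map $\sigma$ on a finite index set, makes $\sigma(V^{\rm ins})=V^{\rm ins}$ rigorous, and it is cleaner than the finite-$j$ wording of the definition. The block-triangular reduction at the start of (b) is also fine.

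Part (b), however, is not proved. You reduce it to the claim that certain polynomial conditions in the $b_{j,l}$ do not vanish identically, and then stop. Choosing ``all $b_{j,l}$ nonzero with generic values'' is circular, and you give neither a witness nor a structural argument.

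This missing step is the whole content of (b), because the $b_{j,l}$ are not free entries of $M$. Each $M_j$ is a fixed $0/1$ matrix plus the rank-one term $A_j{\bf b}_j{\bf e}_1^T$, with ${\bf b}_j$ the first column of $B_j$. Forced zeros and forced coincidences therefore have to be ruled out.

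The natural witness, the one of appendix A, does not do this in general. It turns $M$ into a permutation matrix. As soon as this permutation has more than one cycle, there are eigenvectors not in $V^{\rm ins}$ that have zero components. More than one cycle is forced whenever $V^{\rm ins}\neq0$, since $V^{\rm ins}$ is structurally invariant and the expanding coordinate is significant.

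The structural fact you offer in support is also false. A significant coordinate can stay transverse for a whole turn and become expanding only in the next one. Its column of $M$ is then a unit vector, not a full $b$-column. The second column $(1,0)^T$ of the transition matrix in Example 1 is an instance, and both coordinates there are significant.

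You also leave two further generic claims unargued. First, the $V^{\rm ins}$-components of the eigenvector, obtained by applying $(\lambda I-P)^{-1}$ to the coupling terms, are covered by (b) but not treated. Second, you do not justify that generically ${\rm spec}(M^{\rm sig})\cap{\rm spec}(P)=\emptyset$.

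What is needed is an actual non-degeneracy argument. One option exploits the affine dependence of $M$ on each ${\bf b}_j$ at a stage $j$ where the expanding component of $M_{j-1}\cdots M_1{\bf x}$ is non-zero. Such a $j$ exists for every eigenvector ${\bf x}\notin V^{\rm ins}$: otherwise the $M_j$ would only relabel the coordinates of ${\bf x}$, and ${\bf x}$ would be supported on coordinates that never become expanding. The other option is an explicit parameter choice adapted to the zero pattern, different from the one in appendix A. The paper itself gives no details at this point, only a reference to \cite{op12}.
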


\subsection{Properties of maps}

The Poincar\'e maps ${\cal M}^{(j)}$ are superpositions of maps (\ref{fmap}):
\begin{equation}\label{fmap1}
{\cal M}^{(j)}=
{\cal M}_{j-1}\ldots{\cal M}_1{\cal M}_m\ldots {\cal M}_{j+1}{\cal M}_j.
\end{equation}
In the coordinates $\mbe$ (\ref{newc}) they reduce to
\begin{equation}\label{fmapp}
{\cal M}^{(j)}\mbe=M^{(j)}\mbe+{\bf C}^{(j)}.
\end{equation}

For a linear map $\cal M$, where
\begin{equation}\label{fmap0}
{\cal M}\mbe=M\mbe+{\bf C},
\end{equation}
we define
$$
U^{-\infty}({\cal M})=
\{{\bf y}:\ {\bf y}\in\R^N_-,\ \lim_{k\to\infty}{\cal M}^k{\bf y}=-\mbi\}.
$$

\begin{lemma}\label{lem_2}(Lemma 5 in \cite{op12}.)
Let $\lambda_{\max}$ be the largest in absolute value significant eigenvalue
of the matrix $M$ in (\ref{fmap0}) and ${\bf w}^{\max}$ be the associated
eigenvector. Suppose $\lambda_{\max}\ne 1$ (as noted in subsection \ref{twot},
generically this is true). The measure $\mu(U^{-\infty}({\cal M}))$ is
positive, if and only if the three following conditions are satisfied:
\begin{itemize}
\item[(i)] $\lambda_{\max}$ is real;
\item[(ii)] $\lambda_{\max}>1$;
\item[(iii)] $w_l^{\max}w_q^{\max}>0$ for all $l$ and $q$, $1\le l,q\le N$.
\end{itemize}
\end{lemma}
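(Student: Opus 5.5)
We follow the argument of Lemma 5 in \cite{op12}. The strategy is to iterate the affine map and compare the constant term with the leading eigendirection. We write ${\cal M}^k\mbe=M^k\mbe+\sum_{i=0}^{k-1}M^i{\bf C}$ and decompose $\R^N$ into generalised eigenspaces of $M$. By Theorem~\ref{th_22}(a), $V^{\rm ins}$ is $M$-invariant and all its eigenvalues have modulus one. So $V^{\rm ins}$ contributes only bounded or at most polynomially growing terms, which are negligible against exponential growth along the significant directions. By Theorem~\ref{th_22}(b), generically $w^{\max}_l\ne0$ for all components not in $V^{\rm ins}$. We also assume genericity in the form that $\lambda_{\max}$ is simple and strictly dominant among the significant eigenvalues (up to its complex conjugate if it is not real).

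\emph{Sufficiency.} Assume (i)--(iii) and, say, $w^{\max}_l<0$ for all $l$. We write $\mbe=a{\bf w}^{\max}+{\bf y}$, where ${\bf y}$ lies in the complementary $M$-invariant subspace. Since $\lambda_{\max}>1$ is real and dominant, we get $M^k\mbe=a\lambda_{\max}^k{\bf w}^{\max}+o(\lambda_{\max}^k)|\mbe|$. The constant term satisfies $\sum_{i<k}M^i{\bf C}=O(\lambda_{\max}^k)|{\bf C}|$. Hence there is an open cone $\{a>K(|{\bf y}|+|{\bf C}|)\}$ with $a\ge a_0$ on which every coordinate of ${\cal M}^k\mbe$ behaves like $a\lambda_{\max}^kw^{\max}_l(1+o(1))\to-\infty$. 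This needs a uniform estimate, which we obtain by splitting the growth rates of the remaining eigenvalues as in Lemma~\ref{le02}. The cone has positive measure and lies in $U^{-\infty}$.

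\emph{Necessity.} Suppose one of the conditions fails. If $\lambda_{\max}$ is complex, the component of ${\cal M}^k\mbe$ along the associated two-dimensional real invariant plane rotates by a fixed angle $\arg\lambda_{\max}\notin\pi\mathbb{Z}$. For every $\mbe$ outside the measure-zero set where this component vanishes, there are infinitely many $k$ for which the dominating term has a positive coordinate (here we use that ${\bf w}^{\max}$ has non-zero components). So ${\cal M}^k\mbe\not\to-\mbi$. If $|\lambda_{\max}|<1$, then the significant part of ${\cal M}^k\mbe$ stays bounded, since the spectral radius on $V^{\rm sig}$ is less than one. The map converges to a finite fixed point plus a bounded insignificant part, so no point tends to $-\mbi$. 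If $\lambda_{\max}$ is real with $|\lambda_{\max}|>1$ but (ii) or (iii) fails, then generic points (those with $a\ne0$) eventually have some coordinate of sign opposite to the others, or the signs alternate when $\lambda_{\max}<-1$. This forces ${\cal M}^k\mbe\notin\R^N_-$ infinitely often. The exceptional set $\{a=0\}$ is a hyperplane, hence has measure zero.

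The main obstacle is the necessity direction. First, a non-dominant eigenvalue of equal modulus, or a Jordan block at $\lambda_{\max}$, could spoil the asymptotics; we handle this by the genericity assumption and, for Jordan blocks, by Lemma~\ref{le01}-type polynomial bounds. Second, the insignificant directions must be shown not to drive points to $-\mbi$ on their own. Here we use that $V^{\rm ins}$ consists of transverse coordinates that are permuted by $M$ with unit eigenvalues, so these coordinates cannot tend to $-\infty$ unless forced by the significant part. Apart from these points the proof is the same as in \cite{op12}, since only the entries of $B_j$ differ.
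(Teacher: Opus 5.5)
Your argument is correct under the genericity you assume (simple, strictly dominant $\lambda_{\max}$). It is essentially the approach the paper relies on: the paper gives no proof of this lemma and imports Lemma 5 of \cite{op12}, whose argument is the one you reconstruct (isolate ${\bf w}^{\max}$, use a cone $a\gg|{\bf y}|+|{\bf C}|$ for sufficiency and generic $a\ne0$ for necessity, with $V^{\rm ins}$ contributing only sub-exponential terms). Three small touch-ups: your necessity split omits $|\lambda_{\max}|=1$, $\lambda_{\max}\ne1$, which is handled like $|\lambda_{\max}|<1$ because, $V^{\rm ins}$ being invariant, the significant coordinates evolve autonomously and stay bounded; the insignificant coordinates need not stay bounded (the permutation block has eigenvalue $1$, so they can drift linearly), but this is harmless since a bounded $\ln|w|$ already excludes convergence to $-\mbi$; and Theorem~\ref{th_22}(b) says that \emph{all} components of ${\bf w}^{\max}$, insignificant ones included, are generically nonzero, which you need when (iii) fails through a vanishing component.
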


Below $\lambda_{\max}\ne1$ denotes the largest in absolute value significant eigenvalue of
the transition matrices $M^{(j)}$.

\begin{theorem}\label{th_3}(Theorem 4 in \cite{op12}.)
Let $M_j$ be basic transition matrices of a collection of maps $\{g_1^m\}$
associated with a heteroclinic cycle of type Y. Suppose that for all $j$,
$1\le j\le m$, all transverse eigenvalues of $df(\xi_j)$ are negative. Then
\begin{itemize}
\item[(a)] If the inequality $\lambda_{\max}>1$ holds true
for the transition matrix $M:=M^{(1)}=M_m\ldots M_1$, then $\bf 0$ is
an asymptotically stable fixed point of the collection of maps $\{g_1^m\}$.
\item[(b)] If $\lambda_{\max}<1$, then $\bf 0$ is completely unstable.
\end{itemize}
\end{theorem}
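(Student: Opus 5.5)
The plan is to work in the logarithmic coordinates $\mbe$ of (\ref{newc}), where each $g_j$ becomes the affine map ${\cal M}_j\mbe=M_j\mbe+F_j$. The fixed point ${\bf 0}$ corresponds to $\mbe=-\mbi$, and stability means that orbits starting with all components very negative stay very negative, with all components tending to $-\infty$. Since the constants $F_j$ are bounded, they only shift orbits by bounded amounts. In the regime $\mbe\to-\mbi$ these shifts are negligible compared with the linear growth, so the analysis reduces to the linear products $M^{(j)}$ and $M_{j,l}$.

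The first step handles the insignificant directions. By assumption all transverse eigenvalues are negative, so the entries $b_{j,l+n_j^c}=-t_{j,l}/e_j$ are positive. All contracting entries $b_{j,d}=c_{j,d}/e_j$ are negative, i.e.\ in the coordinates $\ln|v|$ these produce decay. By Theorem \ref{th_22}(a), $V^{\rm ins}$ is $M$-invariant with eigenvalues of modulus one. In $V^{\rm ins}$ the action is a permutation of coordinates plus a contribution $b\,\eta_1$ from the first (expanding) coordinate. I would therefore write $\mbe=\mbe^{\rm sig}+\mbe^{\rm ins}$ and note that every insignificant component at step $k$ equals a fixed initial value plus a sum of positive multiples of earlier values of the expanding coordinates $\ln|w|$. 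Consequently, whenever the significant part tends to $-\infty$, the insignificant part is driven to $-\infty$ as well, since positive multiples of very negative numbers accumulate. The only residual contribution is a bounded permutation of the initial insignificant values, which are already very negative.

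For (a), decompose the significant part along the eigenvectors of $M$. If $\lambda_{\max}>1$, the significant dynamics are dominated by growth along the direction of ${\bf w}^{\max}$. I would show, using that all basic matrices have the structure (\ref{esm}) with positive entries on the transverse part, that $M$ maps the negative orthant (restricted to sufficiently negative points) into itself. This is the step I expect to be the main obstacle. It follows the argument of Theorem 4 in \cite{op12}: one establishes that the significant part of $M^k\mbe$ behaves like $\lambda_{\max}^k$ times a vector with all components of one sign. The sign condition (iii) of Lemma \ref{lem_2} then holds automatically because the matrices map $\R^N_-$ into $\R^N_-$, a Perron--Frobenius-type argument on $-M$ restricted to the relevant cone. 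Combining this with the first step shows that every component tends to $-\infty$ monotonically, up to bounded errors, for all starting points below some threshold. That threshold is uniform, which gives the $\varepsilon$--$\delta$ statement of Definition \ref{def8}. Intermediate maps $g_{(j-1,l)}$ also preserve very negative sets, since each $M_j$ has nonnegative action in the relevant sense. Hence the required bounds at all cross-sections follow.

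For (b), if $\lambda_{\max}<1$, I would show that the significant component of $M^k\mbe$ does not tend to $-\infty$ for almost every initial point. For a generic $\mbe$, which has a nonzero projection on the significant eigenvectors with modulus of eigenvalue $\ge$ the others by Theorem \ref{th_22}(b), the significant growth is at most like $|\lambda_{\max}|^k\to 0$ in relative terms. The orbit therefore stays bounded, or oscillates in sign if the eigenvalue is complex or negative, so it eventually leaves every neighbourhood of $-\mbi$. Because $M$ is linear, the set of exceptional initial points lies in a proper subspace and so has measure zero. Applying Lemma \ref{lem_2} to each $M^{(j)}$ gives $\mu(U^{-\infty})=0$, and this yields complete instability via Definition \ref{def91}.
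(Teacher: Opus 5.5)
There is a genuine gap in (a), and it starts with a sign error. You take the contracting entries to be $b_{j,d}=c_{j,d}/e_j<0$, as printed in (\ref{coeB}), but that formula has a sign slip. By (\ref{phmap}), $\ln|v^{\rm out}_d|=\ln|v_{0d}|-(c_{j,d}/e_j)\ln|w|$, so the entry is $-c_{j,d}/e_j>0$. Compare the entry $-\mu_{15}/\mu_{13}>0$ in subsection~\ref{exa1}; with your sign, $|v^{\rm out}_d|$ would blow up as $w\to0$, contradicting the ``decay'' you invoke.

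The paper proves nothing here itself and defers to \cite{op12}. Still, the formulation of Theorem~\ref{th_4} shows what Theorem~\ref{th_3} is: the case $L=0$, in which no $M_j$ has a negative entry. That is the entire role of the hypothesis on transverse eigenvalues: every entry of every $M_j$ is non-negative, and every row of $M_j$ contains a strictly positive entry (a $-c_{j,d}/e_j$ or a $1$). With this, the step you flag as the main obstacle is immediate. The condition $\max_i\eta_i<-K$ gives $\max_i(M_j\mbe)_i\le-\beta_jK$ with $\beta_j>0$, for each basic matrix and hence for every partial product. With your signs this would be false. Likewise, Perron--Frobenius applies to $M$ itself, not to $-M$. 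It gives $\lambda_{\max}$ real and non-negative with ${\bf w}^{\max}\ge0$, and strict positivity of its significant components is the generic content of Theorem~\ref{th_22}(b).

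Even with the sign corrected, asymptotic stability is asserted rather than proved. Lemma~\ref{lem_2} only yields $\mu(U^{-\infty})>0$. You need every $\mbe$ with all components below some $-K$ to be attracted, uniformly, despite the constants ${\bf C}^{(j)}$. The missing mechanism is order preservation: since $M\ge0$ entrywise, $\mbe\le\mbe'$ implies $M\mbe\le M\mbe'$. Significant rows of $M$ do not involve insignificant columns, because an insignificant coordinate is never expanding. Hence the significant part ${\bf w}$ of ${\bf w}^{\max}$ is an eigenvector of the significant block. Then $\mbe\le-K{\bf w}$ on the significant coordinates gives ${\cal M}\mbe\le-(\lambda_{\max}K-c/w_{\min}){\bf w}$ there, where $c=\max_i|C_i|$ and $w_{\min}=\min_i w_i>0$. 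Once $K(\lambda_{\max}-1)>c/w_{\min}$, this constant increases geometrically under iteration. Your first step then handles the insignificant coordinates, and the row positivity handles intermediate sections and starting sections $l\ne1$. Note that only this bound is monotone, not the components themselves.

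Part (b) is fine once you invoke Lemma~\ref{lem_2}: condition (ii) fails, and in the coordinates $\mbe$ the basin lies in $U^{-\infty}({\cal M}^{(1)})$. Your preceding reasoning is off, though. Since $M\ge0$, $\lambda_{\max}$ cannot be complex or negative. If $\lambda_{\max}<1$, the significant part of every orbit converges to the finite fixed point of the affine map. So no orbit tends to $-\mbi$, and there is no exceptional subspace to discard.
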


\begin{theorem}\label{th_4}(Theorem 5 in \cite{op12}.)
Let $M_j$ be basic transition matrices of a collection of maps $\{g_1^m\}$
associated with a heteroclinic cycle of type Y.
(For type Y heteroclinic cycles the matrices are of the form (\ref{esm}).)
Denote by $j=j_1,\ldots j_L$ the indices, for which $M_j$ involves negative
entries; all entries are non-negative for all remaining $j$. Assume $L>0$
(the case $L=0$ is treated by theorem \ref{th_3}).
\begin{itemize}
\item[(a)] If for at least one $j=j_l+1$ the matrix $M^{(j)}$ does not satisfy
conditions (i)-(iii) of lemma \ref{lem_2}
then $\bf 0$ is a completely unstable fixed point of the collection $\{g_1^m\}$.
\item[(b)] If the matrices $M^{(j)}$ satisfies conditions (i)-(iii) of lemma
\ref{lem_2} for all $j$ such that $j=j_l+1$,
then $\bf 0$ is a fragmentarily
asymptotically stable fixed point of the collection $\{g_1^m\}$.
\end{itemize}
\end{theorem}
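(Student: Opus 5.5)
We work in the logarithmic coordinates \mbel\ of (\ref{newc}). There the maps are affine, ${\cal M}_j\mbe=M_j\mbe+F_j$, and the fixed point $\bf 0$ becomes $\mbe=-\mbi$. Since we only need asymptotically large negative \mbel, the constants $F_j$ are negligible. Three facts drive everything:
\begin{itemize}
\item $\mbe\in\R^{n_j}_-$ with all components large and negative is exactly a small neighbourhood of $\bf 0$ in $(w,{\bf z})$ (up to measure-preserving sign symmetries);
\item ${\cal M}^{(j)}$ and ${\cal M}_{(j,l)}$ are the Poincar\'e and partial maps, with matrices $M^{(j)}$ and $M_{j,l}$;
\item a basic matrix $M_j$ with non-negative entries maps $\R^{n_j}_-$ into $\R^{n_{j+1}}_-$, and large negative vectors to large negative vectors.
\end{itemize}
By (\ref{coeB}), $M_j$ has a negative entry only when some transverse eigenvalue $t_{j,l}>0$ (contracting entries $c_{j,d}/e_j$ are negative, but the form (\ref{esm}) includes signs via $B_j$; we take ``negative entries'' as in the statement). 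So the indices $j_1,\ldots,j_L$ are exactly where a point can be expelled. After applying $M_{j_l}$, the iterate lands near $\xi_{j_l+1}$, i.e.\ in the domain of ${\cal M}^{(j_l+1)}$.

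\textbf{Part (a).} Suppose $M^{(j)}$ with $j=j_l+1$ fails one of (i)--(iii). By lemma \ref{lem_2}, $\mu(U^{-\infty}({\cal M}^{(j)}))=0$. Any ${\bf x}\in\cB_\delta(\{g_1^m\},{\bf y}^1)$ must have its image $g_{(j-1,1)}{\bf x}$ lie in the set of points whose ${\cal M}^{(j)}$-iterates go to $-\mbi$, so that image lies in a null set. We then need a null preimage. Each $M_i$ has rank $\ge$ the rank that reaches $\R^{n_{i+1}}$, and by the non-degeneracy of $B_i$ and appendix A it does not collapse measure in the relevant directions. Concretely, I would show that the preimage under the affine map ${\cal M}_{(j-1,1)}$ of a null set intersected with $\R^{n_1}_-$ is null. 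This needs care, since $M_{j-1,1}$ may be non-square or singular. I would handle it by noting that the basin is forward-invariant under the cyclic shift. It therefore suffices to restrict to the collection started at index $j$, for which the basin is contained in $U^{-\infty}({\cal M}^{(j)})$ directly. Complete instability in the sense of definition \ref{def91} for one index propagates to the others via the statement for all $j$ in definition \ref{def9}.

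\textbf{Part (b).} Suppose conditions (i)--(iii) hold at every $j=j_l+1$. Take ${\bf w}^{\max}$ for $M^{(j_1+1)}$ normalised with all components negative. Consider a cone $K$ of vectors in $\R^{n_{j_1+1}}_-$ close in direction to ${\bf w}^{\max}$ and far out. As in the proof of lemma \ref{lem_2}, iterates of $K$ align with ${\bf w}^{\max}$ and grow by $\lambda_{\max}^k$. I then follow such a vector through $M_{j_1+1},\ldots$ up to $M_{j_2}$. Along these steps the matrices are non-negative, so the vector stays in the negative orthant and its norm is multiplied by factors bounded below. Crossing $M_{j_2}$, the image is close to $M_{j_2,j_1+1}{\bf w}^{\max}$. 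This is an eigenvector of $M^{(j_2+1)}$ with the same eigenvalue, because $M^{(j_2+1)}M_{j_2,j_1+1}=M_{j_2,j_1+1}M^{(j_1+1)}$. Its sign condition (iii) is the hypothesis at $j_2+1$, and the leading significant eigenvalue is shared by all $M^{(j)}$. Hence the image lies in the analogous good cone. Iterating around the cycle keeps the orbit in good cones with all coordinates tending to $-\infty$. This yields both the $\delta$-bound and convergence for all $j$ and $k$. The pulled-back cone at index $1$ has positive measure, since it is the preimage of an open set under non-negative maps (or we simply start the collection at index $j_1+1$).

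\textbf{Main obstacle.} The hardest step is controlling the angular deviation from ${\bf w}^{\max}$ uniformly along the orbit, so that $\delta$-closeness holds at every intermediate map and not only asymptotically. I would first try to obtain a contraction estimate for directions inside a thin cone around ${\bf w}^{\max}$, using $|\lambda|<\lambda_{\max}$ for the other significant eigenvalues. The insignificant eigenvalues, which have modulus one by theorem \ref{th_22}, need separate treatment. For them I would use that their coordinates correspond to transverse directions that remain transverse, and by the invariance of $V^{\rm ins}$ they stay bounded relative to the growing ${\bf w}^{\max}$ component.
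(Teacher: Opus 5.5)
\textbf{Part (a) has a gap at the step you flagged.} The inclusion $\cB_\delta(\{g_1^m\},{\bf y}^1)\subset{\cal M}_{(j-1,1)}^{-1}\bigl(U^{-\infty}({\cal M}^{(j)})\bigr)$ is correct. For type Y, however, the $N_j\times N_1$ matrix $M_{j-1,1}$ need not be surjective, and it cannot be when $N_j>N_1$. The image of ${\cal M}_{(j-1,1)}$ then lies in a proper affine subspace of $\R^{N_j}$, which is itself null, so $\mu(U^{-\infty}({\cal M}^{(j)}))=0$ says nothing about the preimage. In the type Z setting of \cite{op12} every $M_j$ is square with $\det M_j=\pm c_j/e_j\ne0$, and this transfer is immediate; here it is the part of the ``minor modification'' that needs real work.

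Your workaround does not supply it. Forward invariance gives $g_{(j-1,1)}\cB_\delta(\{g_1^m\},{\bf y}^1)\subset\cB_\delta(\{g_j^{j-1}\},{\bf y}^j)$, which runs the wrong way. Definition \ref{def91} concerns the index-$1$ basin, so ``complete instability propagates to the other indices'' is exactly the unproved claim. Neither the non-degeneracy of the diagonal $B_j$ nor appendix A (which only gives $\det M^{(j_0)}\ne0$ at an index of minimal dimension) yields null preimages.

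Argue at index $1$ instead. Since $M^{(j)}=M_{j-1,1}M_{m,j}$ and $M^{(1)}=M_{m,j}M_{j-1,1}$ have the same non-zero eigenvalues, a failure of (i) or (ii) at $j$ is a failure at $1$. Then $\cB_\delta(\{g_1^m\},{\bf y}^1)\subset U^{-\infty}({\cal M}^{(1)})$, which is null by lemma \ref{lem_2}. If only (iii) fails at $j$, set ${\bf v}=M_{j-1,1}{\bf w}_1$. It is non-zero because $M_{m,j}{\bf v}=\lambda_{\max}{\bf w}_1$, it is the $\lambda_{\max}$-eigenvector of $M^{(j)}$, and it has components of both signs. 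Write a point at index $1$ as $\alpha{\bf w}_1$ plus a component in the complementary $M^{(1)}$-invariant subspace. Its index-$j$ iterates are $\lambda_{\max}^k(\alpha+\alpha_0){\bf v}+o(\lambda_{\max}^k)$, with $\alpha_0$ coming from the constant terms. So only the null hyperplane $\alpha=-\alpha_0$ can stay in the negative orthant.

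\textbf{Part (b) has the same index problem and one unchecked sign.} Saying the pulled-back cone has positive measure ``since it is the preimage of an open set under non-negative maps'' is not valid. The map ${\cal M}_{(j_1,1)}$ contains $M_{j_1}$, and if $M_{j_1,1}$ is not surjective the preimage of your cone may be empty. Starting the collection at $j_1+1$ is again the unproved transfer.

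Build the cone at index $1$ instead, around the $\lambda_{\max}$-eigenvector ${\bf w}_1$ of $M^{(1)}$. For $j_L<m$ it is a multiple of $M_m\cdots M_{j_L+1}{\bf w}_{j_L+1}$, so it can be taken strictly negative. The reason is that a basic matrix (\ref{esm}) with non-negative entries maps the open negative orthant into itself: each row contains either the entry $-c_{j,d}/e_j>0$ or a unit entry. Note that (\ref{coeB}) drops this minus sign; compare (\ref{eq_mapg0a}). For $j_L=m$, use (iii) at index $1$. A thin open cone around ${\bf w}_1$ then has positive measure in $\R^{N_1}$.

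Second, at each $M_{j_l}$ you read (iii) as ``all components negative''. But (iii) is invariant under ${\bf w}\mapsto-{\bf w}$, so you must show that the propagated vector $M_{j_l,j_{l-1}+1}{\bf w}_{j_{l-1}+1}$ lies in the negative rather than the positive orthant. If $n^c_{j_l}\ge1$, this follows from a contracting row of $M_{j_l}$: its only non-zero entry $-c_{j_l,d}/e_{j_l}>0$ multiplies the negative first component, which is why it is automatic for type Z. Type Y allows $n^c_j=0$ (e.g.\ at $\xi_2^*$ in section \ref{exa2}), and there you need a separate argument or an extra hypothesis.

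The rest of (b) is the standard argument and works. It uses a forward-invariant thin cone around the dominant eigenvector in an adapted norm, with the modulus-one insignificant part dominated because $\lambda_{\max}>1$.
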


\section{Examples of type Y cycles}\label{exa}

In this section we present two examples of type Y heteroclinic cycles
admitted by the generalised Lotka-Volterra (GLV) system in $\R^5_+$:
\begin{equation}\label{lvsys}
\dot x_i=x_i(r_i+\sum_{j=1}^5 a_{ij}x_j),\quad 1\le i\le 5,\quad x_i\ge0.
\end{equation}
The cycles are comprised of equilibria and trajectories that belong to
two- or three-dimensional
invariant subspaces. Dynamics of the Lotka-Volterra system in dimension two is
well known. Dynamics in three-dimensional subspaces, including the existence
of heteroclinic connections, is known only in some particular cases, see
e.g., \cite{op23a} and references therein.
In appendix B we prove sufficient conditions on the coefficients of (\ref{lvsys})
for the existence of heteroclinic connections in three-dimensional
subspaces (see figure \ref{fig1}) emerging in the examples.

\begin{figure}
{\large
\hspace*{-5mm}\includegraphics[width=99mm]{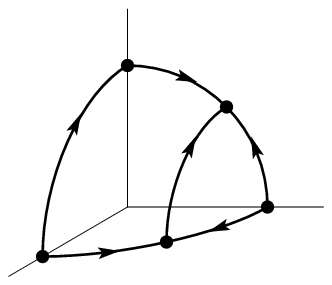}~\hspace*{-25mm}\includegraphics[width=99mm]{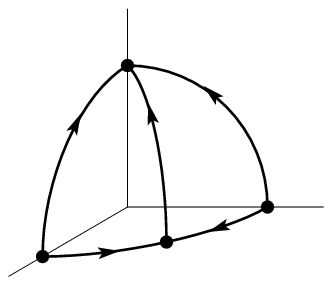}

\vspace*{-37mm}\hspace*{20mm}$\xi_1$\hspace*{71mm}$\xi_1$

\vspace*{-16mm}\hspace*{66mm}$\xi_2$\hspace*{71mm}$\xi_2$

\vspace*{-29mm}\hspace*{44mm}$\xi_3$\hspace*{71mm}$\xi_3$

\vspace*{31mm}\hspace*{50mm}$\xi_1^*$\hspace*{71mm}$\xi_2^*$

\vspace*{-35mm}\hspace*{61mm}$\xi_2^*$

\vspace*{9mm}\hspace*{76mm}$x_2$\hspace*{71mm}$x_2$

\vspace*{10mm}\hspace*{12mm}$x_1$\hspace*{71mm}$x_1$

\vspace*{-52mm}\hspace*{31mm}$x_3$\hspace*{71mm}$x_3$

\vspace*{47mm}\hspace*{25mm}(a)\hspace*{71mm}(b)

\vspace{4mm}
}

\caption{Heteroclinic connections in a three-dimensional GLV system
under the conditions of theorem \ref{tlv30} (a) and theorem \ref{tlv3} (b).
\label{fig1}}
\end{figure}

In both example we set
\begin{equation}\label{lvcoe}
r_i=1\hbox{ and }a_{ii}=-1,
\end{equation}
which implies existence of equilibria $\xi_i$ on each coordinate axis with
$x_i=1$. The equilibria are stable along the respective axes.

\subsection{Example 1}\label{exa1}

We consider system (\ref{lvsys}) equivariant with respect to the symmetry
permuting the coordinate axes,
$\gamma:(x_1,x_2,x_3,x_4,x_5)\to(x_2,x_3,x_4,x_5,x_1)$.
Denote by $\lambda_{ij}$ the eigenvalue of $df(\xi_i)$ in the direction
${\bf e}_j$. From (\ref{lvsys}), (\ref{lvcoe}) we have that the eigenvalues of
$df(\xi_1)$ are
$$\lambda_{12}=1+a_{21},\ \lambda_{13}=1+a_{31},\
\lambda_{14}=1+a_{41},\ \lambda_{15}=1+a_{51}.$$
For other $\xi_i$ the eigenvalues can be obtained applying the symmetry
$\gamma$. The equivariance of the system implies that $a_{ij}=a_{i+1,j+1}$ and
$\lambda_{ij}=\lambda_{i+1,j+1}$. In particular, $a_{21}=a_{15}$,
$a_{31}=a_{14}$, $a_{41}=a_{13}$ and $a_{51}=a_{12}$.

We assume that $1+a_{21}>0$, $1+a_{12}>0$ and  $1-a_{12}a_{21}>0$. Then
(see theorem \ref{tlv2})
there exists an equilibrium $\xi^*_1=(x_1^*,x_2^*,0,0,0)$ with
\begin{equation}\label{exa10}
x_1^*={1+a_{12}\over 1-a_{12}a_{21}},\quad
x_2^*={1+a_{21}\over 1-a_{12}a_{21}}.
\end{equation}
The equilibria $\xi^*_j$ are $\xi^*_j=\gamma^{j-1}\xi^*_1$, where $j=2,3,4,5$.
Denote by $\mu_{ij}$, $j=3,4,5$, the eigenvalues of
$df(\xi^*_i)$ in the direction ${\bf e}_j$. We have
\begin{equation}\label{exa11}
\mu_{13}=1+x^*_1a_{31}+x^*_2a_{32},\
\mu_{14}=1+x^*_1a_{41}+x^*_2a_{42},\ \mu_{15}=1+x^*_1a_{51}+x^*_2a_{52}.
\end{equation}

The cycle of this example is homoclinic. It is comprised
of $\xi^*_1$, heteroclinic connection $\kappa_1:\xi^*_1\to\xi^*_2$
(see figure \ref{fig1}a) and
their images under the action of the symmetry $\gamma$. Hence, for
$\xi^*_1$ the expanding eigenvalue is $\mu_{13}$, the contracting is $\mu_{15}$
and the transverse one is $\mu_{14}$. By construction, the expanding eigenvalue is
positive and the contracting is negative. We assume that the transverse one is
negative, which is a necessary condition for a heteroclinic cycle to be
asymptotically stable. As proven in the
appendix, sufficient conditions for the existence of the connection
$\kappa_1$ in the subspace $(x_1,x_2,x_3,0,0)$ in system (\ref{lvsys}),(\ref{lvcoe})
are
$$
\begin{array}{l}
-1<a_{12}<0,\ 1+a_{13}<0,\ 1+a_{23}>0,\ 1+a_{21}>0,\\
a_{12}a_{21}<1,\ a_{23}a_{32}<1,\ 1+a_{31}>0,\ 1+a_{32}>0,\\
1-a_{12}a_{21}+a_{31}(1+a_{12})+a_{32}(1+a_{21})>0,\
1-a_{23}a_{32}+a_{12}(1+a_{23})+a_{13}(1+a_{32})<0.
\end{array}
$$

A matrix of the local map $H_1^{(in)}\to H_2^{(in)}$ is
$$\left(
\renewcommand{\arraystretch}{1.2}
\begin{array}{cc}
-\mu_{15}/\mu_{13} & 0\\
-\mu_{14}/\mu_{13} & 1
\end{array}
\right),$$
where the basis in $H_1^{(in)}$ is $({\bf e}_3,{\bf e}_4)$ (expanding and
transverse eigenvectors of $df(\xi_1^*)$) and the basis in
$H_2^{(in)}$ is $({\bf e}_5,{\bf e}_4)$ (transverse and
expanding eigenvectors of $df(\xi_2^*)$). Permuting ${\bf e}_4$ and
${\bf e}_5$ in $H_2^{(in)}$ and taking into account the symmetry $\gamma$
we obtain the transition matrix of the cycle
$$\left(
\renewcommand{\arraystretch}{1.2}
\begin{array}{cc}
-\mu_{14}/\mu_{13} & 1\\
-\mu_{15}/\mu_{13} & 0
\end{array}
\right).$$
Since $\mu_{13}$ is positive and other eigenvalues are negative, the
determinant of the
matrix in negative. Therefore, the eigenvalues of this matrix are real
of different signs. Applying the formula for the roots of a quadratic
equation we obtain that the positive root is larger than one whenever
$$-\mu_{14}/\mu_{13}-\mu_{15}/\mu_{13}>1.$$

The values of coefficients of (\ref{lvsys}) employed in numerical simulations are
\begin{equation}\label{exa13}
a_{12}=a_{14}=-0.5,\ a_{13}=-2\hbox{ and }a_{15}=0.5.
\end{equation}
Symmetry $\gamma$ implies that
$$a_{21}=a_{32}=a_{15},\ a_{31}=a_{42}=a_{14},\
a_{41}=a_{52}=a_{13},\ a_{51}=a_{12}.$$

Hence, from (\ref{exa10})-(\ref{exa13})
$$x_1^*=0.4,\ x_2^*=1.2,\ \mu_{13}=1.4,\ \mu_{14}=-0.4\hbox{ and }
\mu_{15}=-1.6.$$
The transition matrix of the cycle is
$$\left(
\renewcommand{\arraystretch}{1.2}
\begin{array}{cc}
0.29 & 1\\
1.22 & 0
\end{array}
\right).$$
The dominant eigenvalue is $\lambda^{max}=1.22>1$, hence by theorem
\ref{th1} the cycle is asymptotically stable.
The results of numerical simulations, a trajectory approaching the homoclinic
cycle, are shown in figure \ref{fig2}.

\begin{figure}
{\large
\hspace*{-12mm}\includegraphics[width=102mm]{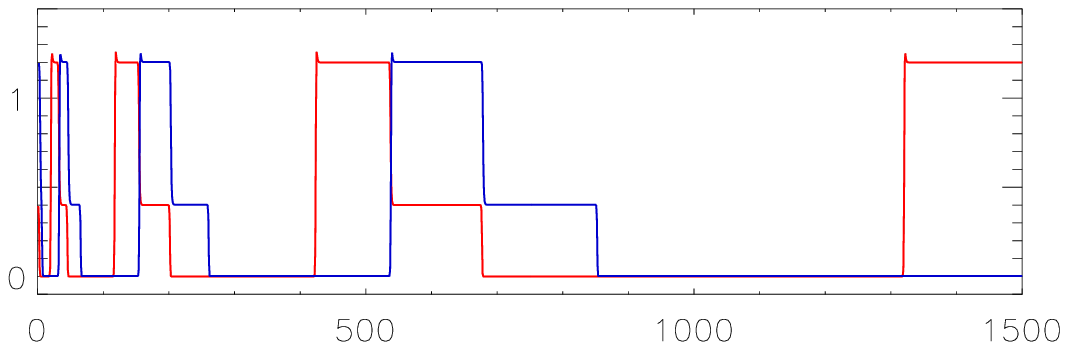}\hspace*{-12mm}\includegraphics[width=102mm]{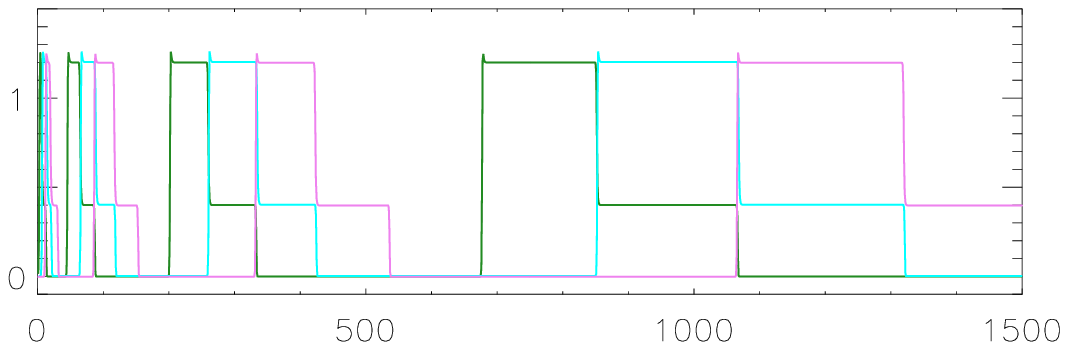}

\vspace*{-33mm}\hspace*{-4mm}$\bf x$

\vspace*{18mm}\hspace*{73mm}$\tau$\hspace*{88mm}$\tau$

\vspace{1mm}

}

\caption{Time dependence of $\bf x$ for the system of section \ref{exa1}:
$x_1$ -- red line, $x_2$ -- blue, $x_3$ -- green, $x_4$ -- cyan, $x_5$ -- violet.
\label{fig2}}
\end{figure}

\subsection{Example 2}\label{exa2}

The cycle of this example is $\xi_1\to\xi_2^*\to\xi_3\to\xi_4\to\xi_5\to\xi_1$,
where the equilibrium $\xi_2^*=(x_1^*,x_2^*,0,0,0)$ belongs to the plane
spanned by ${\bf e}_1$ and ${\bf e}_2$ and other equilibria to the
respective coordinate axes. The connection $\xi_1\to\xi_2^*$ belongs to the
plane $(x_1,x_2,0,0,0)$, the one  $\xi^*_2\to\xi_3$ to the subspace
$(x_1,x_2,x_3,0,0)$, and connections $\xi_j\to\xi_{j+1}$ ($\xi_1\equiv\xi_6$
is assumed) to the planes spanned by ${\bf e}_j$ and ${\bf e}_{j+1}$.

The conditions for the existence of $\xi_2^*$ and the heteroclinic
trajectory $\xi_1\to\xi_2^*$ are that
\begin{equation}\label{exi21}
\lambda_{12}=1+a_{21}>0,\ \lambda_{21}=1+a_{12}>0,\ 1-a_{12}a_{21}>0.
\end{equation}
The trajectory $\xi_j\to\xi_{j+1}$ exists whenever
\begin{equation}\label{exi22}
\lambda_{j,j+1}=1+a_{j+1,j}>0,\ \lambda_{j+1,j}=1+a_{j,j+1}<0,
\end{equation}
see theorem \ref{tlv2}. The coordinates of $\xi_2^*$ are given by (\ref{exa10})
and the eigenvalues of $df(\xi^*_2)$ are
$$\mu_{23}=1+x^*_1a_{31}+x^*_2a_{32},\
\mu_{24}=1+x^*_1a_{41}+x^*_2a_{42},\ \mu_{25}=1+x^*_1a_{51}+x^*_2a_{52}.$$

As proven in appendix B, a sufficient condition for the existence of the
connection $\xi_2^*\to\xi_3$ in $(x_1,x_2,x_3,0,0)$ is that
\begin{equation}\label{exi221}
\begin{array}{l}
1+a_{12}>0,\ 1+a_{13}<0,\ 1+a_{21}>0,\ 1+a_{23}<0,\
1+a_{31}>0,\ 1+a_{32}>0,\\
a_{12}a_{21}>1,\
1-a_{12}a_{21}+a_{31}(1+a_{12})+a_{32}(1+a_{21})>0.
\end{array}
\end{equation}

Let the coordinates in $H_j^{(in)}$ be chosen as follows:
$$H_1^{(in)}:(x_2,x_3,x_4),\ H_2^{(in)}:(x_3,x_4,x_5),\ H_3^{(in)}:(x_4,x_5),\
H_4^{(in)}:(x_1,x_2,x_5),\ H_5^{(in)}:(x_1,x_2,x_3).$$
From (\ref{esm}), in these coordinates the basic transition matrices
$M_j:H_j^{(in)}\to H_{j+1}^{(in)}$ are
\begin{equation}\label{basm1}
M_1=\left(
\renewcommand{\arraystretch}{1.2}
\begin{array}{ccc}
-\lambda_{13}/\lambda_{12} & 1 & 0 \\
-\lambda_{14}/\lambda_{12} & 0 & 1 \\
-\lambda_{15}/\lambda_{12} & 0 & 0
\end{array}
\right),\
M_2=\left(
\renewcommand{\arraystretch}{1.2}
\begin{array}{ccc}
-\mu_{24}/\mu_{23} & 1 & 0 \\
-\mu_{25}/\mu_{23} & 0 & 1
\end{array}
\right),\
M_3=\left(
\renewcommand{\arraystretch}{1.2}
\begin{array}{cc}
-\lambda_{31}/\lambda_{34} & 0 \\
-\lambda_{32}/\lambda_{34} & 0 \\
-\lambda_{35}/\lambda_{34} & 1
\end{array}
\right),
\end{equation}
\begin{equation}\label{basm2}
M_4=\left(
\renewcommand{\arraystretch}{1.2}
\begin{array}{ccc}
1 & 0 & -\lambda_{41}/\lambda_{45} \\
0 & 1 & -\lambda_{42}/\lambda_{45} \\
0 & 0 & -\lambda_{43}/\lambda_{45}
\end{array}
\right),\
M_5=\left(
\renewcommand{\arraystretch}{1.2}
\begin{array}{ccc}
-\lambda_{52}/\lambda_{51} & 1 & 0 \\
-\lambda_{53}/\lambda_{51} & 0 & 1 \\
-\lambda_{54}/\lambda_{51} & 0 & 0
\end{array}
\right).
\end{equation}
We prescribe that all transverse eigenvalues are negative, except for
$\lambda_{13}$. Namely, that
\begin{equation}\label{exi23}
\lambda_{14},\lambda_{35},\lambda_{41},\lambda_{42},\lambda_{52},\lambda_{53},\mu_{24},\mu_{25}<0.
\end{equation}

The following values of coefficients of the system (\ref{lvsys}) were employed
in computations:
$$
\begin{array}{l}
a_{13}=a_{23}=a_{34}=b_{45}=-3,\
a_{43}=a_{54}=a_{15}=1,\ a_{12}=a_{21}=0.5,\\
a_{31}=-0.75,\ a_{32}=0.75,\hbox{ other }a_{ij}=-1.02\hbox{ for }i\ne j.
\end{array}
$$
The coefficients satisfy the conditions (\ref{exi21})-(\ref{exi221}) for
the existence of the equilibrium $\xi^*_2$ and heteroclinic connections,
and that the transverse eigenvalues (\ref{exi23}) are negative.

For these values of parameters the $2\times 2$ transition matrix
$M^{(3)}=M_2M_1M_5M_4M_3$ is
$$
M^{(3)}\approx\left(
\renewcommand{\arraystretch}{1.2}
\begin{array}{cc}
0.5 & 3 \\
-0.5 & 3
\end{array}
\right).
$$
The dominant eigenvalue is $\lambda_{max}\approx 2$ and the associated
eigenvector is $v_{max}\approx (2,1)$. The eigenvector of $M^{(2)}$ associated
with $\lambda_{max}$ is $M_1M_5M_4M_3(2,1)\approx(0.7,2,0.03)$.

Hence, $\lambda_{max}$ and associated eigenvector satisfy the conditions
of theorem \ref{th2} for fragmentary asymptotic stability of the cycle.
The results of numerical simulations of a trajectory approaching the
cycle are shown in figure \ref{fig3}.

\begin{figure}
{\large
\hspace*{-12mm}\includegraphics[width=102mm]{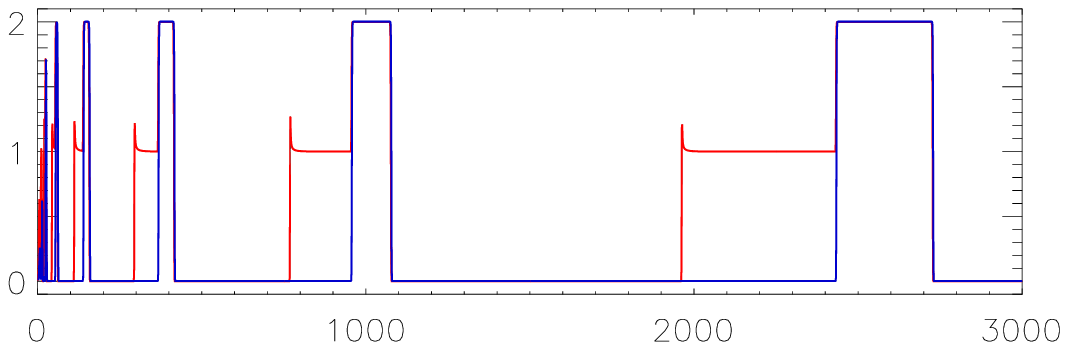}\hspace*{-12mm}\includegraphics[width=102mm]{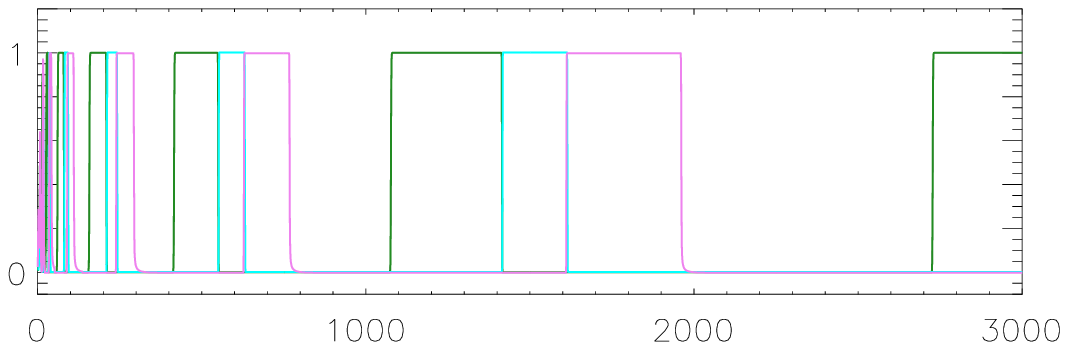}

\vspace*{-33mm}\hspace*{-4mm}$\bf x$

\vspace*{18mm}\hspace*{73mm}$\tau$\hspace*{88mm}$\tau$

\vspace{1mm}

}

\caption{Time dependence of $\bf x$ for the system of section \ref{exa2}:
$x_1$ -- red line, $x_2$ -- blue, $x_3$ -- green, $x_4$ -- cyan, $x_5$ -- violet.
\label{fig3}}
\end{figure}

\section{Conclusion}\label{sec_conc}

We have defined type Y heteroclinic cycles, that include
type Z heteroclinic cycles \cite{op12}, quasi-simple cycles
of \cite{gar} and
heteroclinic cycles in pluridimensions studied in \cite{cas25}.
We have proved necessary and sufficient conditions
for asymptotic stability or fragmentary asymptotic stability of the
cycles under the assumption that the contracting eigenvalues are negative
and the radial eigenvalues have negative real parts.
The conditions involve the eigenvalues and eigenvectors of
transition matrices, as this is the case for type Z cycles.
The matrices are products of basic transition matrices that depend
on eigenvalues of the linearisations near steady states comprising the cycle
and the dimension of the contracting subspace. The definition of
type Y cycles can be extended to type Y omnicycles \cite{op23b},
that differ from cycles in that the equilibria
in the sequence $\{\xi_1,\xi_2,...,\xi_n\}$ are not required to be distinct.
The conditions for fragmentary asymptotic stability of type Y heteroclinic
cycles hold true for type Y  omnicycles. (Omnicycles, that are not
heteroclinic cycles, are not asymptotically stable.)

We discuss two examples of type Y heteroclinic cycles admitted by the
generalised Lotka-Volterra system (GLV) in $\R^5$, one of the cycles is
asymptotically stable, the other one is fragmentarily asymptotically stable.
More examples of type Y heteroclinic cycles in the GLV system in $\R^n$
can be constructed similarly by using the conditions for the existence
of heteroclinic trajectories in two- or three-dimensional subspaces given
in appendix B or in \cite{op23a}. The results on the existence of heteroclinic
connections can be used for the construction of other types of cycles and
omnicycles, possibly with two-dimensional connections, and heteroclinic
networks.

Unlike in earlier studies of asymptotic
and/or fragmentary asymptotic stability of heteroclinic cycles we do not
require the eigenvalues of linearisations near equilibria to be distinct, the cycle to be robust
and the flow-invariant subspaces that heteroclinic trajectories
belong to be of equal dimension. Nevertheless, the conditions for
asymptotic stability of type Z cycles hold true for type Y as well.
Natural question arising here is how large is the class of heteroclinic cycles
where the conditions for stability are applicable. In particular, what
happens if we allow the radial or contracting eigenvalue to be positive,
or the expanding subspace to be of dimension larger than one.

Another possible continuation of the study is the investigation of bifurcations
occurring when the cycle ceases to be (fragmentarily) asymptotically
stable or the cycle is destroyed. Some of these bifurcations for type Z
cycles were studied in \cite{op12}, it might be the case that the results
hold true for type Y cycles.

\appendix

\section{The rank of transition matrices for type Y heteroclinic
cycles}\label{rankM}

As defined in section \ref{smaps}, a transition matrix of the collection of
maps associated with a heteroclinic cycle is a product of basic transition
matrices,
\begin{equation}\label{trMa}
M^{(j)}=M_{j+1}...M_mM_1...M_j.
\end{equation}
A basic matrix $M_i:H_i\to H_{i+1}$ has the dimension $n_{i+1}\times n_i$ and
$M^{(j)}$ is a square $n_j\times n_j$ matrix. Let the
equilibria be ordered in such a way that $n_1\le n_i$ for any
$2\le i\le m$. If $\lambda$ is an eigenvalue of $M^{(1)}$
with the associated eigenvector $v$ then $\lambda$ also is an eigenvalue of
$M^{(j)}$ with the associated eigenvector $M_{j+1}...M_mv$. The other
$n_j-n_1$ eigenvalues of $M^{(j)}$ vanish. Hence, the rank of any matrix
$M^{(j)}$ equals to the rank of $M^{(1)}$. Since $M^{(1)}$ is a product
of matrices of various dimensions where most of the entries are zeros,
it is not evident that $M^{(1)}$ is not
degenerate. In this appendix we prove that generically $\rank M^{(1)}=n_1$,
which is equivalent to that $\det M^{(1)}\ne 0$.

The proof is based on the following proposition and lemma:
\begin{proposition}\label{prm}\cite{mit}
Let $A(x)$ be a real analytic function on (a connected open domain $U$ of)
$\R^d$. If $A$ is not identically zero, then its zero set
$$F(A) :=\{\ x\in U\ :\  A(x)=0\ \}$$
has a zero measure.
\end{proposition}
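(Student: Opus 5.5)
We would prove the proposition by induction on the dimension $d$. The inductive step uses Fubini's theorem to reduce to slices, and the identity theorem for real analytic functions to pass from ``vanishes on a large set'' to ``vanishes identically''.

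First we reduce to a local statement. Since $A\not\equiv 0$ on the connected domain $U$, the identity theorem implies that $A$ is not identically zero on any nonempty open subset of $U$. Indeed, if $A$ vanished on an open set, all its derivatives would vanish at a point of it. The set of points where all derivatives vanish is open, by analyticity, and closed, by continuity, so it would be all of $U$. Now cover $U$ by countably many open cubes $Q=Q'\times I$, where $Q'\subset\R^{d-1}$ and $I\subset\R$ is an interval. We choose each cube small enough that $A$ is given on it by a single convergent power series around its centre $(x_0',t_0)$. It suffices to show that $\mu(F(A)\cap Q)=0$ for each such cube.

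The case $d=1$ is classical. A nonzero analytic function on an interval has isolated zeros, so its zero set is countable and has measure zero.

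For the inductive step, suppose $\mu(F(A)\cap Q)>0$. By Fubini, the set $S\subset Q'$ of those $x'$ for which the slice $\{t\in I: A(x',t)=0\}$ has positive one-dimensional measure itself has positive $(d-1)$-dimensional measure. For each $x'\in S$, the function $t\mapsto A(x',t)$ is analytic on the interval $I$ and vanishes on a set of positive measure. By the case $d=1$ it therefore vanishes identically on $I$, and hence $\partial_t^kA(x',t_0)=0$ for all $k\ge0$. Each function $x'\mapsto\partial_t^kA(x',t_0)$ is real analytic on the connected set $Q'$ and vanishes on $S$, which has positive measure. By the induction hypothesis each of these functions vanishes identically on $Q'$.

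Expanding $A$ in $t$ about $t_0$,
$$A(x',t)=\sum_{k\ge0}\partial_t^kA(x',t_0)\,\frac{(t-t_0)^k}{k!}.$$
This expansion is valid on the cube because the cube lies inside the domain of convergence of the power series. Hence $A\equiv0$ on $Q$, which contradicts the first paragraph.

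The main obstacle is the uniformity of the $t$-expansion over all $x'\in Q'$. We handle it by the initial choice of sufficiently small cubes, each contained in a polydisc of convergence, so that term-by-term rearrangement in $t$ is legitimate. A second point to check is that $Q'$ is connected, which is automatic for cubes, so that the induction hypothesis applies on $Q'$.
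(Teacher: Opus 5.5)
The paper gives no proof of this proposition; it is quoted from \cite{mit}, so there is no argument in the paper to compare yours against. Your proof is correct and complete. It is the standard induction on $d$: Tonelli--Fubini, the one-variable case applied to the slices $t\mapsto A(x',t)$, the $(d-1)$-dimensional case applied to the coefficients $x'\mapsto\partial_t^kA(x',t_0)$, and the identity theorem for the final contradiction.

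Two minor remarks. First, the ``main obstacle'' you flag is not an obstacle. Once $\partial_t^kA(x',t_0)=0$ for all $k$ and all $x'\in Q'$, each function $t\mapsto A(x',t)$ is analytic on the connected interval $I$ with all derivatives zero at $t_0$. By the one-variable identity theorem it therefore vanishes on $I$. So you never need the joint power series at the centre of the cube. Any countable family of open boxes $Q'\times I\subset U$ covering $U$ works, for example all boxes with rational vertices contained in $U$.

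Second, if you keep cubes centred at expansion points, the countable subcover needs the Lindel\"of property of $\R^d$. Also, $S$ is measurable because $F(A)\cap Q$ is relatively closed, so Tonelli applies to its indicator. Both points are routine.
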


\begin{lemma}\label{prodm}
Let $C=AB$, where $A$ is an $m\times n$ matrix and $B$ is an $n\times p$
matrix. Suppose that all entries in the lower left $(m-r)\times r$
corner of the matrix $A$ vanish and entries in the in the lower left
$(n-r)\times r$ corner of the matrix $B$ also vanish for some
$0\le r\le \min(m,n,p)$. Then all entries the lower left $(m-r)\times r$
corner of the $m\times p$ matrix $C$ vanish and the
upper left $r\times r$ corner of the matrix $C$ is the product of the
respective corners of the matrices $A$ and $B$.
\end{lemma}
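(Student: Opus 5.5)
We argue directly from the definition of the matrix product, $c_{ik}=\sum_{l=1}^n a_{il}b_{lk}$, splitting the summation index at $l=r$: $c_{ik}=\sum_{l\le r}a_{il}b_{lk}+\sum_{l>r}a_{il}b_{lk}$. Throughout, the row index $i$ and column index $k$ refer to the $m\times p$ matrix $C$. The vanishing hypotheses on $A$ and $B$ are statements about rows $>r$ and columns $\le r$, so the two parts of the sum can be controlled separately.

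For the lower left corner, take $i>r$ and $k\le r$. In the first part ($l\le r$), the entry $a_{il}$ has row index $i>r$ and column index $l\le r$. It therefore lies in the lower left $(m-r)\times r$ corner of $A$ and vanishes. In the second part ($l>r$), the entry $b_{lk}$ has row index $l>r$ and column index $k\le r$. It therefore lies in the lower left $(n-r)\times r$ corner of $B$ and vanishes. Hence $c_{ik}=0$, which proves the first claim.

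For the upper left corner, take $i\le r$ and $k\le r$. The second part again vanishes, because $b_{lk}=0$ for $l>r$, $k\le r$. Therefore $c_{ik}=\sum_{l\le r}a_{il}b_{lk}$, which is exactly the $(i,k)$ entry of the product of the upper left $r\times r$ corners of $A$ and $B$. Only the hypothesis on $B$ is needed for this part.

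No step is a genuine obstacle; the argument is pure index bookkeeping. The one point that needs care is the role of the assumption $r\le\min(m,n,p)$: it guarantees that all the corners in the statement are well defined, including the degenerate case $r=0$, where the statement is vacuous.
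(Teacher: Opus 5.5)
Your proof is correct and follows the same approach as the paper, which simply says the lemma follows from the definition of the matrix product. You carry out that index bookkeeping explicitly by splitting the sum at $l=r$.
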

The proof of the lemma follows from the definition of the matrix product.

\medskip
A basic transition matrix has the form (\ref{esm})
\begin{equation}\label{esmA}
M_j:=A_j\left(
\begin{array}{ccccc}
b_{j,1}&0&0&\ldots&0\\
.&.&.&\ldots&.\\
b_{j,n_j^c}&0&0&\ldots&0\\
b_{j,n_j^c+1}&1&0&\ldots&0\\
b_{j,n_j^c+2}&0&1&\ldots&0\\
.&.&.&\ldots&.\\
b_{j,n_{j+1}}&0&0&\ldots&1
\end{array}
\right),
\end{equation}
where $n_{j+1}=n^t_{j+1}+1=n^c_j+n^t_j$, $n_{j+1}\ge n_j-1$ and $A_j$
is a permutation matrix.

The determinant of $M^{(1)}$ can be regarded as a function of
${\bf b}=\{b_{j,i}\}$, $1\le i\le n_j,1\le j\le m$:
$$F({\bf b})\equiv\det M^{(1)}=\det(M_m...M_1).$$
Being a polynomial function, $F({\bf b})$ is an analytic one.
In agreement with proposition \ref{prm}, to prove that the function is not
identically zero, we aim on finding
a set of variables ${\bf b}$ such that $F({\bf b})\ne 0$.

To do this, we split the vectors $\{v_i^j\}_{i=1}^{n_j}$ comprising
the bases in $H_j$, $1\le j\le m$, into two groups, the first
group and the second one. The splitting is coded by a vector
${\bf h}^j=(h_1^j,...,h_{n_j}^j)$, where $h_i^j=1$ if $v_i^j$ belongs to the
first group and $h_i^j=0$ otherwise. The splitting
is performed by setting ${\bf h}^1={\bf 1}$ and employing the following
recurrent procedure:
\begin{itemize}
\item[case I] $\hbox{If }n_{j+1}=n_j\hbox{ then }{\bf h}^{j+1}=A_j{\bf h}^j.$
\item[case II] $\hbox{If }n_{j+1}-n_j=k\hbox{ with }k>0\hbox{ then }
{\bf h}^{j+1}=A_j\tilde{\bf h}\hbox{ where }
\tilde{\bf h}=(0,...,0,h^j_1,h^j_2,...,h_{n_j}^j).$
\item[case III] $\hbox{If }n_{j+1}=n_j-1\hbox{ and }h^j_1=0\hbox{ then }
{\bf h}^{j+1}=A_j\tilde{\bf h}\hbox{ where }
\tilde{\bf h}=(h^j_2,...,h_{n_j}^j).$
\item[case IV]$\hbox{If }n_{j+1}=n_j-1\hbox{ and }h^j_1=1\hbox{ then }
{\bf h}^{j+1}=A_j\tilde{\bf h}\hbox{ where }
\tilde{\bf h}=(h^j_2,.,h^j_{s-1},1,h^j_{s+1},..,h_{n_j}^j)\break
\hbox{ and }h_s^j=0.$
\end{itemize}
By construction, any of ${\bf h}^j$ has exactly $n_1$ ones.
The values of $\{b_{ij}\}$ are chosen as follows:
\begin{itemize}
\item[case I] $b_{j1}=1$ and $b_{ji}=0$ for other $i$.
\item[case II] $b_{j,k+1}=1$ and $b_{ji}=0$ for other $i$.
\item[cases III] $b_{ji}=0$ for all $2\le i\le n_j$.
\item[case IV] $b_{js}=1$ and
$b_{ji}=0$ for other $i$.
\end{itemize}

Denote by $M_j^*$ the basic transition matrices in the new bases, which
are permutations of the old bases with vectors in the first group going
first. Due to our choice of $\{b_{ij}\}$, the upper left $n_1\times n_1$
corner of $M_j^*$ for any $j$ is a permutation matrix. The entries
in the lower left $(n_j-n_1)\times n_1$ corner vanish.
Applying $m-1$ times lemma \ref{prodm} to the product $M^{(1)}=M_m^*...M_1^*$
starting from $M_2^*M_1^*$ we obtain that $M^{(1)}$ is a permutation
matrix and $\det M^{(1)}=\pm1\ne0$. Hence,
we have identified the values of $\bf b$ such that $F({\bf b})\ne0$.
Therefore, by proposition \ref{prm} generically the matrix $M^{(1)}$
is not degenerate.

\section{Heteroclinic connections in the Lotka-Volterra system in $\R^3_+$}\label{lvol}

Before proving the conditions for the existence of heteroclinic
connections we recall theorems (see, e.g., \cite{hs98,op23a}) that will be
used in the proofs. We denote
$$\R^n_+=\{~\bx\in\R^n~:~x_j>0,\ j=1,...,n~\}\hbox{ and }
\R^n_{0,+}=\{~\bx\in\R^n~:~x_j\ge0,\ j=1,...,n~\}$$
and consider the generalised Lotka-Volterra (GLV) system in $\R^n_{0,+}$
\begin{equation}\label{sysap}
\dot x_i=b_j(r_i+\sum_{j=1}^n a_{ij}x_j),\quad i=1,...,n.
\end{equation}
A steady state of (\ref{sysap}), $\bx=(x_1,...,x_n)\in\R^n_{0,+}$, is called
{\it interior} if $x_1...x_n\ne0$.

\begin{theorem}\label{intp1} The interior of $\R^n_{0,+}$ contains $\alpha-$ and
$\omega-$ limit points if and only if (\ref{lvsys}) admits an interior steady
state.
\end{theorem}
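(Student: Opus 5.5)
The direction ``interior steady state $\Rightarrow$ interior limit points'' is immediate. The substantive direction is the converse, which I would prove by contraposition. Assuming there is no interior steady state, I would construct a function of the form $V(\bx)=\sum_i c_i\ln x_i$ that is strictly increasing along every interior trajectory, and then show this is incompatible with an interior $\alpha$- or $\omega$-limit point. This is the classical argument of Hofbauer and Sigmund \cite{hs98}, applied to the Lotka--Volterra form (\ref{lvsys}) (in (\ref{sysap}) the prefactor should be $x_i$).

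First, if $\bx^*$ is an interior steady state, it is its own $\alpha$- and $\omega$-limit point, so the interior contains such points.

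Conversely, suppose there is no interior steady state. Consider
$$K=\{\,{\bf r}+A\bx\ :\ \bx\in\R^n_+\,\},\qquad {\bf r}=(r_i),\quad A=(a_{ij}).$$
Since $\R^n_+$ is open and convex, $K$ is convex. It is the image of $\R^n_+$ under an affine map, so it is relatively open in its affine hull. The absence of an interior equilibrium means exactly that ${\bf 0}\notin K$. I then need a vector ${\bf c}$ with ${\bf c}\cdot{\bf k}>0$ for all ${\bf k}\in K$, and I split into two cases.
\begin{itemize}
\item If ${\bf 0}\notin{\rm aff}\,K$, choose ${\bf c}$ orthogonal to the linear part of ${\rm aff}\,K$ with ${\bf c}\cdot{\bf k}>0$ on ${\rm aff}\,K$.
\item If ${\bf 0}\in{\rm aff}\,K$, apply the separation theorem for a point and a relatively open convex set inside the subspace ${\rm aff}\,K$. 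This gives ${\bf c}$ with ${\bf c}\cdot{\bf k}\ge0$ on $K$, and ${\bf c}\cdot{\bf k}$ not identically zero on $K$. A linear functional that is non-negative on a relatively open convex set is either identically zero there or strictly positive there, so the inequality is strict.
\end{itemize}
This strictness step is the only delicate point, and I expect it to be the main obstacle.

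Next, set $V(\bx)=\sum_i c_i\ln x_i$ on $\R^n_+$. Along solutions,
$$\dot V=\sum_i c_i\,(r_i+(A\bx)_i)={\bf c}\cdot({\bf r}+A\bx)>0.$$
Now suppose ${\bf y}\in\R^n_+$ is an $\omega$-limit point of some orbit $\Phi(\tau,\bx)$. The orbit cannot lie on the boundary, because the boundary is invariant and closed, which would force ${\bf y}$ onto the boundary; hence $\bx$ is interior. Along the orbit $V$ is increasing. By continuity of $V$ near ${\bf y}$ and $\Phi(\tau_k,\bx)\to{\bf y}$, monotonicity gives $V(\Phi(\tau,\bx))\to V({\bf y})$. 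The $\omega$-limit set is invariant, and the same argument applied to every point of it that lies in $\R^n_+$ shows that $V$ is constant, equal to $V({\bf y})$, on the $\omega$-limit set intersected with $\R^n_+$. In particular $V$ is constant along $\Phi(\tau,{\bf y})$ for $\tau$ near $0$, since that orbit stays interior for short times. This contradicts $\dot V>0$.

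The $\alpha$-limit case is identical with time reversed, using that $V$ is decreasing in backward time. Hence, with no interior steady state, there are no interior $\alpha$- or $\omega$-limit points, which completes the equivalence.
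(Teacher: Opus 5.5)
Your proof is correct and is essentially the classical Hofbauer--Sigmund argument: separate ${\bf 0}$ from the convex set $K=\{{\bf r}+A\bx:\bx\in\R^n_+\}$ to obtain ${\bf c}$, then use the strictly increasing function $\sum_i c_i\ln x_i$ to rule out interior limit points. The paper proves nothing here itself and simply recalls the result from \cite{hs98}, whose proof is this one, and your two-case use of the relative openness of $K$ correctly supplies the strict inequality ${\bf c}\cdot{\bf k}>0$ that is usually stated without detail.
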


\begin{theorem}\label{tlv2} Consider the system
\begin{equation}\label{glv2}
\renewcommand{\arraystretch}{1.3}
\begin{array}{l}
\dot x_1=\alpha_1x_1(1-x_1+\beta_1x_2)\\
\dot x_2=\alpha_2x_2(1-x_2+\gamma_2x_1),
\end{array}
\end{equation}
where
\begin{equation}\label{glv2c}
\alpha_1>0\hbox{ and }\alpha_2>0.
\end{equation}
Let $\beta_1$ and $\gamma_2$ satisfy one of the following sets of inequalities
\begin{equation}\label{uneq}
\renewcommand{\arraystretch}{1.4}
\begin{array}{ll}
(a) & 1+\beta_1<0,\ 1+\gamma_2>0\\
(b) & 1+\beta_1>0,\ 1+\gamma_2<0\\
(c) & 1+\beta_1>0,\ 1+\gamma_2>0,\ \beta_1\gamma_2<1\\
(d) & 1+\beta_1>0,\ 1+\gamma_2>0,\ \beta_1\gamma_2>1\\
(e) & 1+\beta_1<0,\ 1+\gamma_2<0.
\end{array}
\end{equation}
then the system has steady states $\xi_1=(1,0)$ and $\xi_2=(0,1)$ stable
along the respective axes. In case (a) there exists a heteroclinic trajectory
$\xi_1\to\xi_2$ and all trajectories in $\R^2_+$ are
attracted by $\xi_2$; (b) there exists a heteroclinic trajectory
$\xi_2\to\xi_1$ and all trajectories in $\R^2_+$ are attracted by $\xi_2$;
(c) there exists a steady state $\xi^*$
\begin{equation}\label{ss4}
\xi^*=\biggl({1+\beta_1\over1-\beta_1\gamma_2},
{1+\gamma_2\over1-\beta_1\gamma_2}\biggr).
\end{equation}
and heteroclinic trajectories $\xi_1\to\xi^*$ and $\xi_2\to\xi^*$, and
all trajectories in $\R^2_+$ are attracted by stable $\xi^*$;
(d) there are no steady states, except for $\xi_0=(0,0)$, $\xi_1$ and $\xi_2$,
which are unstable and for $t\to\infty$ all trajectories in $\R^2_+$ go to
$(\infty,\infty)$; (e) there exists a steady state $\xi^*$ (\ref{ss4})
and heteroclinic trajectories $\xi^*\to\xi_1$ and $\xi^*\to\xi_2$ and
almost all trajectories in $\R^2_+$ (except for the one-dimensional stable
manifold of $\xi^*$) are attracted either by stable
$\xi_1$ or by stable $\xi_2$.
\end{theorem}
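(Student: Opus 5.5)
The plan is to reduce the five cases of (\ref{uneq}) to the classical phase-plane analysis of a planar Lotka--Volterra system restricted to the closed quadrant $\R^2_{0,+}$. First I locate all steady states. The axes $x_1=0$ and $x_2=0$ are invariant. On the $x_1$-axis the dynamics is $\dot x_1=\alpha_1x_1(1-x_1)$, which gives $\xi_0=(0,0)$ and $\xi_1=(1,0)$, with $\xi_1$ attracting along the axis; the $x_2$-axis is treated the same way and gives $\xi_2$. An interior steady state solves the linear system $x_1-\beta_1x_2=1$, $-\gamma_2x_1+x_2=1$. This system has the unique solution (\ref{ss4}) whenever $\beta_1\gamma_2\ne1$, and the solution is positive exactly in cases (c) and (e): numerators and denominator share a sign. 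In case (d) both numerators are positive but the denominator is negative, so no interior equilibrium exists. In cases (a) and (b) the numerators have opposite signs, so again there is none. Next I compute the transverse eigenvalues $\alpha_2(1+\gamma_2)$ at $\xi_1$ and $\alpha_1(1+\beta_1)$ at $\xi_2$. At $\xi^*$ the Jacobian is $\mathrm{diag}(\alpha_1x_1^*,\alpha_2x_2^*)$ multiplied by $\left(\begin{array}{cc}-1&\beta_1\\ \gamma_2&-1\end{array}\right)$, so its determinant has the sign of $1-\beta_1\gamma_2$ and its trace is negative. Hence $\xi^*$ is a stable node or focus in case (c) and a saddle in case (e).

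Second, I would establish the global behaviour in $\R^2_+$. By theorem \ref{intp1}, in cases (a), (b) and (d) there is no interior equilibrium, so no interior point has an $\omega$-limit point in $\R^2_+$. Every interior trajectory therefore tends to the boundary or to infinity. I would argue with the nullclines $1-x_1+\beta_1x_2=0$ and $1-x_2+\gamma_2x_1=0$, which are straight lines. They split the quadrant into regions with fixed signs of $(\dot x_1,\dot x_2)$, and in each region the trajectories are monotone.

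In case (a) the $x_1$-nullcline hits the $x_2$-axis below $x_2=1$ and the $x_2$-nullcline lies above it, so the nullclines do not meet. From this I would show that eventually $\dot x_1<0$ and that $x_2$ stays bounded, so trajectories converge to $\xi_2$. The one-dimensional unstable manifold of the saddle $\xi_1$ enters $\R^2_+$ and must also converge to $\xi_2$, which gives the connection $\xi_1\to\xi_2$. Case (b) is the same with the indices swapped.

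In case (c) I would use the Lyapunov function $V=\sum_i c_i(x_i-x_i^*-x_i^*\ln(x_i/x_i^*))$ with suitably chosen $c_i>0$, or else Dulac's criterion with $B=1/(x_1x_2)$ to exclude periodic orbits, combined with Poincar\'e--Bendixson. Boundedness follows because the nullclines bound a trapping region. Thus all interior trajectories go to $\xi^*$, and the unstable manifolds of the saddles $\xi_1$ and $\xi_2$ supply the connections. In case (d) the lines bound a region where $\dot x_1,\dot x_2>0$ that is forward invariant, and I would show unboundedness there. In case (e) the stable manifold of the saddle $\xi^*$ separates the basins of $\xi_1$ and $\xi_2$, and its unstable manifold branches give the connections $\xi^*\to\xi_1$ and $\xi^*\to\xi_2$. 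Dulac's criterion again excludes cycles.

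The main obstacle is the global statements: boundedness, exclusion of periodic orbits, and the escape to infinity in case (d). The Dulac function $1/(x_1x_2)$ handles the periodic orbits, since its divergence is $-\alpha_1/x_2-\alpha_2/x_1<0$. Boundedness and escape need a careful reading of the nullcline geometry.
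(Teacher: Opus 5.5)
The paper gives no proof of this theorem. It is imported from \cite{hs98,op23a} as the known classification of planar Lotka--Volterra dynamics and is used only as a tool in Appendix~B. Your outline is the standard phase-plane argument behind that classification: equilibria and their linearisations, Theorem~\ref{intp1} to exclude interior limit points, nullcline monotonicity, Dulac with $B=1/(x_1x_2)$ or a Volterra Lyapunov function, and Poincar\'e--Bendixson. The outline is sound. The constants $c_i$ in case (c) do exist: $1-\beta_1\gamma_2>0$ is precisely what guarantees a positive diagonal $D$ with $DA+A^{T}D$ negative definite, where $A$ is the interaction matrix with rows $(-1,\beta_1)$ and $(\gamma_2,-1)$.

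There is one concrete point you missed. Your ``indices swapped'' argument for case (b) does \emph{not} give the statement as written. In (b) the transverse eigenvalue at $\xi_2$ is $\alpha_1(1+\beta_1)>0$ and at $\xi_1$ it is $\alpha_2(1+\gamma_2)<0$. So $\xi_2$ is a saddle, and all trajectories in $\R^2_+$ are attracted by $\xi_1$. The phrase ``attracted by $\xi_2$'' in (b) is a misprint. You should state and prove the corrected version explicitly, rather than say the case follows by symmetry.

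A few steps still need filling in.

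In case (d) you only treat trajectories already in the cone $R=\{(x_1-1)/\beta_1<x_2<1+\gamma_2x_1\}$. You must also show that the other two regions are left. For example, a trajectory staying forever above the $x_2$-nullcline has $x_1$ increasing and $x_2$ decreasing, with $x_2>1+\gamma_2x_1$. Hence both coordinates are monotone and bounded, so it would converge to an interior equilibrium, and there is none. You also need that in $R$ both coordinates tend to infinity together; this follows from the two bounding lines. Moreover, in case (d) $\beta_1,\gamma_2>0$ and the quadratic terms dominate in $R$, so solutions blow up in finite time. Hence ``$t\to\infty$'' must be read as the end of the maximal existence interval.

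In case (e) you need one connection to each of $\xi_1$ and $\xi_2$. To get this, check that the unstable eigenvector at $\xi^*$ points into the wedges $\{\dot x_1>0>\dot x_2\}$ and $\{\dot x_1<0<\dot x_2\}$ between the nullclines. These wedges are forward invariant, and trajectories in them converge monotonically to $\xi_1$ and $\xi_2$, respectively.

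In cases (a) and (b) the determinant $1-\beta_1\gamma_2$ can vanish, e.g.\ $\beta_1=-2$, $\gamma_2=-1/2$ in case (a). Your ``unique solution'' step does not cover this. Here the nullclines are parallel and distinct, so again there is no interior equilibrium.
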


Consider the GLV system in $\R^3_{0,+}$ and re-write (\ref{sysap}) as
\begin{equation}\label{glv3}
\begin{array}{l}
\dot x_1=\alpha_1x_1(1-x_1+\beta_1x_2+\gamma_1x_3)\\
\dot x_2=\alpha_2x_2(1-x_2+\beta_2x_3+\gamma_2x_1)\\
\dot x_3=\alpha_3x_3(1-x_3+\beta_3x_1+\gamma_3x_2).
\end{array}
\end{equation}
For any values of the coefficients $\alpha_i$, $\beta_i$ and $\gamma_i$, the
system has four steady states: the origin $\xi_0=(0,0,0)$ and one equilibrium on
each of the coordinate axes, $\xi_1=(1,0,0)$, $\xi_2=(0,1,0)$ and $\xi_3=(0,0,1)$.
We assume that the equilibria on coordinate axes are stable along the respective
directions, which is the case if
\begin{equation}\label{cond1}
\alpha_i>0\hbox{ for }i=1,2,3.
\end{equation}

\begin{theorem}\label{tlv30} Consider the system (\ref{glv3}),(\ref{cond1}). If
\begin{equation}\label{uneq3}
\renewcommand{\arraystretch}{1.4}
\begin{array}{l}
-1<\beta_1<0,\ 1+\gamma_1<0,\ 1+\beta_2>0,\ 1+\gamma_2>0\\
1+\beta_3>0,\ 1+\gamma_3>0,\ \beta_2\gamma_3<1,\ \beta_1\gamma_2<1\\
1-\beta_2\gamma_3+\beta_1(1+\beta_2)+\gamma_1(1+\gamma_3)<0\\
1-\beta_1\gamma_2+\beta_3(1+\beta_1)+\gamma_3(1+\gamma_2)>0
\end{array}
\end{equation}
then the phase portrait of the system is the respective one shown in figure
\ref{fig1}a. Namely, there exist steady states $\xi^*_1$ and $\xi^*_2$ in the
planes
$(x_1,x_2,0)$ and $(0,x_2,x_3)$, respectively, and heteroclinic trajectories
$\xi_1\to\xi_3$, $\xi_1\to\xi_1^*$, $\xi_2\to\xi_1^*$, $\xi_2\to\xi_2^*$,
$\xi_3\to\xi_2^*$ and $\xi_1^*\to\xi_2^*$, and all trajectories in $\R^3_+$
are attracted by the stable $\xi_2^*$.
\end{theorem}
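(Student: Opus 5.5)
The plan is to build the phase portrait of (\ref{glv3}),(\ref{cond1}) in three stages: first the coordinate planes, using theorem \ref{tlv2}; then the eigenvalues of the planar equilibria in the directions transverse to their planes; and finally the interior of $\R^3_+$, using theorem \ref{intp1} and an $\omega$-limit argument.

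\emph{The three coordinate planes.} Each plane is invariant, and the restricted system has the form (\ref{glv2}).
\begin{itemize}
\item[(1)] In the plane $(x_1,x_2,0)$ the relevant coefficients are $\beta_1$ and $\gamma_2$. Since $-1<\beta_1<0$, $1+\gamma_2>0$ and $\beta_1\gamma_2<1$, this is case (c). It gives the stable planar equilibrium $\xi_1^*$ and the connections $\xi_1\to\xi_1^*$ and $\xi_2\to\xi_1^*$.
\item[(2)] In the plane $(0,x_2,x_3)$ the relevant coefficients are $\beta_2$ and $\gamma_3$. Since $1+\beta_2>0$, $1+\gamma_3>0$ and $\beta_2\gamma_3<1$, this is again case (c). It gives $\xi_2^*$ with $x_2^*=(1+\beta_2)/(1-\beta_2\gamma_3)$ and $x_3^*=(1+\gamma_3)/(1-\beta_2\gamma_3)$, together with the connections $\xi_2\to\xi_2^*$ and $\xi_3\to\xi_2^*$.
\item[(3)] In the plane $(x_1,0,x_3)$ the relevant coefficients are $\gamma_1$ and $\beta_3$. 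Since $1+\gamma_1<0$ and $1+\beta_3>0$, this is case (a) with $x_1$ as the first variable. It gives the connection $\xi_1\to\xi_3$.
\end{itemize}

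\emph{Transverse eigenvalues of $\xi_1^*$ and $\xi_2^*$.}
\begin{itemize}
\item[(1)] At $\xi_1^*$ the eigenvalue in the $x_3$-direction is $\alpha_3(1+\beta_3x_1^*+\gamma_3x_2^*)$. Substituting (\ref{ss4}) turns it into
$$\alpha_3\,\frac{1-\beta_1\gamma_2+\beta_3(1+\beta_1)+\gamma_3(1+\gamma_2)}{1-\beta_1\gamma_2}.$$
By the last inequality of (\ref{uneq3}) this is positive, so $\xi_1^*$ has a one-dimensional unstable manifold pointing into $\R^3_+$.
\item[(2)] At $\xi_2^*$ the eigenvalue in the $x_1$-direction is
$$\alpha_1\,\frac{1-\beta_2\gamma_3+\beta_1(1+\beta_2)+\gamma_1(1+\gamma_3)}{1-\beta_2\gamma_3}<0.$$
Together with planar stability from case (c), this makes $\xi_2^*$ asymptotically stable in $\R^3_{0,+}$.
\end{itemize}
The remaining boundary equilibria $\xi_0,\xi_1,\xi_2,\xi_3$ are then saddles or sources with respect to $\R^3_{0,+}$.

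\emph{Interior dynamics.}
\begin{itemize}
\item[(1)] \emph{Boundedness.} Since $\beta_1<0$ and $\gamma_1<0$, eventually $x_1\le1+\epsilon$. With $x_1$ bounded, the $(x_2,x_3)$ equations form a two-dimensional LV-type system with bounded forcing. Its coefficient condition $\beta_2\gamma_3<1$ suggests a bound by a weighted sum $x_2+cx_3$ with a suitable $c>0$. I will construct this absorbing region explicitly.
\item[(2)] \emph{No interior equilibrium.} An interior steady state solves the $3\times3$ linear system $x_i-(\text{off-diagonal terms})=1$. I will solve it by Cramer's rule and show that the inequalities (\ref{uneq3}) force some coordinate to be nonpositive. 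Geometrically, the $x_1$-nullcline plane $1-x_1+\beta_1x_2+\gamma_1x_3=0$ should miss the segment of the $(x_2,x_3)$-nullcline line lying in $\R^3_+$. The reason is that this line meets the face $x_1=0$ at $\xi_2^*$, where $1+\beta_1x_2^*+\gamma_1x_3^*<0$. I expect the sign bookkeeping here to be the main obstacle. If the direct determinant estimate fails, my first fallback is a Lyapunov-type function $V=x_1^{p}x_2^{q}x_3^{s}$ with $\dot V/V$ of one sign in $\R^3_+$, which would simultaneously exclude interior equilibria and interior recurrence.
\item[(3)] \emph{Convergence.} By theorem \ref{intp1}, every interior $\omega$-limit set lies on the boundary. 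It is compact and invariant and is built from the boundary equilibria and the connections found above. A Butler--McGehee argument then excludes the saddles $\xi_0,\xi_1,\xi_2,\xi_3,\xi_1^*$ from the $\omega$-limit set: their stable manifolds lie in the boundary, and their unstable branches lead to $\xi_2^*$ or up the chain to it. The $\omega$-limit set therefore contains $\xi_2^*$, and asymptotic stability of $\xi_2^*$ gives convergence to $\xi_2^*$.
\end{itemize}
Applying this last step to the interior branch of the unstable manifold of $\xi_1^*$ yields the connection $\xi_1^*\to\xi_2^*$. This completes the portrait of figure~\ref{fig1}a.
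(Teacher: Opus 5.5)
Your route is the paper's: the three faces via theorem \ref{tlv2}, Cramer's rule for (\ref{lins}), an absorbing region, and theorem \ref{intp1}. For the absorbing region the paper uses the box $\widehat U$ with a case split on the signs of $\beta_2,\gamma_3$; your bound $x_1\le 1+\epsilon$ followed by a weighted sum $x_2+cx_3$ works just as well when $\beta_2\gamma_3<1$. Your Butler--McGehee step spells out what the paper compresses into ``therefore all trajectories are attracted by $\xi_2^*$'', and it is also where $\xi_1^*\to\xi_2^*$ comes from. One fix in that step: $\xi_1^*$ is excluded from the $\omega$-limit set because the branch of $W^u(\xi_1^*)$ in $\R^3_{0,+}$ lies in the open octant, while the $\omega$-limit set lies in the boundary. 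Saying this branch ``leads to $\xi_2^*$'' is circular at that stage. Also, $\xi_0$ is a source, not a saddle.

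More importantly, the step you flag as the main obstacle is immediate, but your geometric reason for it does not suffice. Write $F_i$ for the bracket in the $i$-th equation of (\ref{glv3}). Along the line $F_2=F_3=0$, parametrised by $x_1$, one finds $F_1=F_1(\xi_2^*)-x_1\det A/(1-\beta_2\gamma_3)$. Under (\ref{uneq3}), $\det A$ can be negative. For instance, $\beta_1=-\frac12$, $\gamma_1=-10$, $\gamma_2=10$, $\gamma_3=-0.4$, $\beta_2=\beta_3=0$ satisfy all the hypotheses and give $\det A=-34$. In that case the plane $F_1=0$ does meet the line at some $x_1>0$. The sign $F_1(\xi_2^*)<0$ alone cannot show that this point lies outside the octant; here it has $\tilde x_3=-1.6/34<0$.

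What settles it is that, by Cramer's rule, the numerators of $\tilde x_1$ and $\tilde x_3$ are exactly $1-\beta_2\gamma_3+\beta_1(1+\beta_2)+\gamma_1(1+\gamma_3)<0$ and $1-\beta_1\gamma_2+\beta_3(1+\beta_1)+\gamma_3(1+\gamma_2)>0$. These are the numerators of the transverse eigenvalues you already computed at $\xi_2^*$ and $\xi_1^*$. Hence $\tilde x_1\tilde x_3<0$ when $\det A\ne0$. When $\det A=0$ there is no solution at all, since any solution would satisfy $\det A\cdot x_1=$ (numerator of $\tilde x_1$) $\ne 0$. So the hypothesis at $\xi_1^*$ is indispensable for excluding an interior equilibrium, exactly as in the paper's proof.
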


\proof
The existence of steady state $\xi_1^*$ and $\xi_2^*$,
and heteroclinic trajectories $\xi_1\to\xi_3$,
$\xi_1\to\xi_1^*$, $\xi_2\to\xi_1^*$, $\xi_2\to\xi_2^*$ and $\xi_3\to\xi_2^*$
follows from theorem \ref{tlv2}.

The interior steady state $(\tilde x_1,\tilde x_2,\tilde x_3)$ is the solution
of the linear system
\begin{equation}\label{lins}
\renewcommand{\arraystretch}{1.2}
\begin{array}{l}
x_1-\beta_1x_2-\gamma_1x_3=1\\
x_2-\beta_2x_3-\gamma_2x_1=1\\
x_3-\beta_3x_1-\gamma_3x_2=1
\end{array}
\end{equation}
By the Cramer's rule
\begin{equation}\label{crule}
\renewcommand{\arraystretch}{1.2}
\begin{array}{l}
\tilde x_1=(1-\beta_2\gamma_3+\beta_1(1+\beta_2)+\gamma_1(1+\gamma_3))/\det A\\
\tilde x_2=(1-\beta_3\gamma_2+\beta_2(1+\beta_3)+\gamma_2(1+\gamma_2))/\det A\\
\tilde x_3=(1-\beta_1\gamma_2+\beta_3(1+\beta_1)+\gamma_3(1+\gamma_2))/\det A
\end{array}
\end{equation}
where $A$ is the matrix of linear system (\ref{lins}). The conditions of
the theorem imply that $\tilde x_1$ and $\tilde x_3$ have different signs,
therefore the system (\ref{glv3}) does not have steady states in $\R^3_+$.

Consider the box $\widehat U$,
$$\widehat U=\{~~(x_1,x_2,x_3)~:~0\le x_1\le \hat x_1,\ 0\le x_2\le \hat x_2,\
0\le x_3\le \hat x_3~~\},$$
where the values of $\hat x_i$, $i=2,3$, are different for different signs
of $\beta_2$ and $\gamma_3$. Namely, we set $\hat x_1\ge 1$ and
\begin{equation}\label{box}
\renewcommand{\arraystretch}{1.2}
\begin{array}{lcl}
\beta_2\le0,\ \gamma_3\le0&:
& \hat x_2\ge 1+\hat x_1s(\gamma_2),\ \hat x_3\ge 1+\hat x_1s(\beta_3)\\
\beta_2\le0,\ \gamma_3>0&:
& \hat x_2\ge 1+\hat x_1s(\gamma_2),\ \hat x_3\ge 1+\hat x_1s(\beta_3)+\hat x_2\gamma_3\\
\beta_2>0,\ \gamma_3\le0&:
& \hat x_3\ge 1+\hat x_1s(\beta_3),\
\hat x_2\ge 1+\hat x_1s(\gamma_2)+\hat x_3\beta_2\\
\beta_2>0,\ \gamma_3>0 &:&
 q\ge 1+\hat x_1s(\beta_3)+\hat x_1s(\gamma_2),\
\hat x_2=q(1+\beta_2)/(1-\beta_2\gamma_3),\\
&&\hat x_3=q(1+\gamma_3)/(1-\beta_2\gamma_3),
\end{array}
\end{equation}
where $s(y)=\max(0,y)$. The inequalities together with
the conditions of the theorem imply that $\dot x_j<0$ at the $x_j=\hat x_j$
side of the box $\widehat U$. Since $\hat x_j$ can be taken arbitrary large,
together with the invariance of the coordinate
planes this implies that any trajectory in $\R^3_+$ enters the forward
invariant box $\widehat U$ for some $\tau>0$. By theorem \ref{intp1}
the system (\ref{glv3}) does not have $\omega$-sets inside the box,
therefore all trajectories in $\R^3_+$ are attracted by the stable $\xi_2^*$.
\qed

\begin{theorem}\label{tlv3} Consider the system (\ref{glv3}),(\ref{cond1}). If
\begin{equation}\label{uneq30}
\renewcommand{\arraystretch}{1.4}
\begin{array}{l}
1+\beta_1>0,\ 1+\gamma_1<0,\ 1+\beta_2<0,\ 1+\gamma_2>0,
\ 1+\beta_3>0,\ 1+\gamma_3>0\\
\beta_1\gamma_2<1,\
1-\beta_1\gamma_2+\beta_3(1+\beta_1)+\gamma_3(1+\gamma_2)>0
\end{array}
\end{equation}
then the phase portrait of the system is the respective one shown in figure
\ref{fig1}b. Namely, there exist steady state $\xi_2^*$ in the plane
$(x_1,x_2,0)$, heteroclinic trajectories $\xi_1\to\xi_2^*$,
$\xi_1\to\xi_3$, $\xi_2\to\xi_2^*$, $\xi_2\to\xi_3$ and $\xi_2^*\to\xi_3$,
and all trajectories in $\R^3_+$ are attracted by stable $\xi_3$.
\end{theorem}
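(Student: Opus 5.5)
The plan is to follow the proof of theorem \ref{tlv30}: first locate all equilibria on the boundary, then rule out interior equilibria, and finally use a forward invariant box together with theorem \ref{intp1} to conclude that all interior trajectories go to $\xi_3$.

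First, the boundary. I will apply theorem \ref{tlv2} to each coordinate plane of (\ref{glv3}).
\begin{itemize}
\item In the plane $(x_1,x_2,0)$ the coefficients are $\beta_1,\gamma_2$. We have $1+\beta_1>0$, $1+\gamma_2>0$ and $\beta_1\gamma_2<1$, which is case (c). This gives $\xi_2^*$ given by (\ref{ss4}), stable in the plane, together with the connections $\xi_1\to\xi_2^*$ and $\xi_2\to\xi_2^*$.
\item In the plane $(x_1,0,x_3)$ the relevant coefficients are $\gamma_1$ (effect of $x_3$ on $x_1$) and $\beta_3$ (effect of $x_1$ on $x_3$). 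We have $1+\gamma_1<0$ and $1+\beta_3>0$, which is case (a) or (b) after relabelling. This gives $\xi_1\to\xi_3$.
\item In the plane $(0,x_2,x_3)$ the coefficients are $\beta_2$ and $\gamma_3$. We have $1+\beta_2<0$ and $1+\gamma_3>0$, which gives $\xi_2\to\xi_3$.
\end{itemize}

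Next I will show $\xi_2^*$ is a saddle whose unstable direction is transverse to its plane. The eigenvalue of $df(\xi_2^*)$ in the $x_3$ direction is $\alpha_3(1+\beta_3x_1^*+\gamma_3x_2^*)$. Substituting (\ref{ss4}) and multiplying by $1-\beta_1\gamma_2>0$, its sign is the sign of $1-\beta_1\gamma_2+\beta_3(1+\beta_1)+\gamma_3(1+\gamma_2)$, which is positive by (\ref{uneq30}). So $\xi_2^*$ has a one-dimensional unstable manifold pointing into $\R^3_+$.

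Then I will rule out interior equilibria via Cramer's rule (\ref{crule}). I would try to show that two of the numerators have opposite signs, or that one numerator vanishes with the wrong sign relative to $\det A$. The most promising choice is to compare $\tilde x_3$, whose numerator is positive by assumption, with $\tilde x_1$ or $\tilde x_2$. For those I will need $1+\gamma_1<0$ and $1+\beta_2<0$. A cleaner fallback avoids computing the determinant: the $x_3$-equation and the $x_1$- or $x_2$-equation at an interior point would force incompatible signs. With no interior equilibrium, theorem \ref{intp1} says there are no interior $\omega$-limit points. Every interior trajectory therefore accumulates on the boundary.

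After that I will construct a forward invariant box $\widehat U$ with the case distinctions of (\ref{box}), adapted to the signs of $\beta_2,\gamma_3$ and the others. On the outer faces we need $\dot x_j<0$, so every trajectory in $\R^3_+$ is bounded and its $\omega$-limit set is a compact invariant subset of $\partial\R^3_{0,+}$. The boundary flow is now known from theorem \ref{tlv2}: $\xi_0,\xi_1,\xi_2$ and $\xi_2^*$ are all unstable in directions pointing into $\R^3_+$, and $\xi_3$ is a sink. The chain of connections $\xi_1\to\xi_2^*\to\xi_3$, $\xi_2\to\xi_3$, $\xi_1\to\xi_3$ is acyclic and ends at $\xi_3$. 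Since $\xi_3$ is asymptotically stable, a standard Butler–McGehee argument shows that the $\omega$-limit set of an interior trajectory cannot consist only of saddles. The only alternative is that it contains $\xi_3$, which then attracts the trajectory.

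I expect the main obstacle to be this last step: excluding $\omega$-limit sets on the boundary made of the saddles and their connections, since theorem \ref{intp1} only excludes interior limit points. I would handle it with Butler–McGehee plus acyclicity of the boundary chain, and double-check the sign argument for the absence of an interior equilibrium.
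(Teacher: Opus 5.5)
Your outline has the same skeleton as the paper's proof: theorem \ref{tlv2} on the three faces, no interior equilibrium, an absorbing box, then theorem \ref{intp1}. Your analysis of the faces and the sign of the $x_3$-eigenvalue at $\xi_2^*$ are correct. The proposal differs from the paper in two places.

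\textbf{Excluding an interior equilibrium.} The paper does not use Cramer's rule here. Writing $N_j$ for the nullcline planes, it shows that $N_1\cap N_2\cap\R^3_{0,+}$ is a segment from $\xi_2^*$ to a point $s^*$ of the face $x_1=0$. This point lies on the piece of $N_2$ between $(0,0,-1/\beta_2)$ and $\xi_2$. The affine function $1-x_3+\beta_3x_1+\gamma_3x_2$ is positive at all these points, hence on the whole segment.

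Your Cramer route also works, but you leave the decisive sign check open. It is not a matter of choosing the right numerator: under (\ref{uneq30}) neither the numerator $D_1$ of $\tilde x_1$ nor the numerator $D_2$ of $\tilde x_2$ has a fixed sign. What closes it is
\[
D_1=(1+\beta_1)(1+\beta_2)+(1+\gamma_3)(\gamma_1-\beta_2),\qquad
D_2=(1+\gamma_1)(1+\gamma_2)+(1+\beta_3)(\beta_2-\gamma_1).
\]
Note that the numerator of $\tilde x_2$ printed in (\ref{crule}) is wrong. From these identities, $D_1<0$ when $\gamma_1\le\beta_2$ and $D_2<0$ when $\beta_2\le\gamma_1$, while $D_3>0$ by hypothesis. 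Since $(\det A)\,x_i=D_i$ for every solution of (\ref{lins}), this also rules out solutions when $\det A=0$.

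The split $\gamma_1\lessgtr\beta_2$ is exactly the paper's ``for definiteness $q_{02}>q_{01}$''. The paper never treats the other case, where the segment leaves the octant through $x_2=0$, so your algebraic version is the more complete one. Your ``fallback'' using only the $x_3$-equation and one other cannot work. Each such pair of nullclines can meet the open octant (e.g.\ $N_1\cap N_3$ for $\beta_3=\gamma_3=0$, $\beta_1>0$), so all three equations must enter.

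\textbf{The final convergence step.} The paper only says that, since there are no interior $\omega$-limit points, all trajectories go to the sink (which it misnames $\xi_2^*$). It does not address $\omega$-limit sets on the boundary. Your Butler--McGehee step fills this gap, and it goes through:
\begin{itemize}
\item Every boundary orbit tends to an equilibrium, so $\omega(x)$ contains one.
\item $\xi_0$ is a source, so it cannot lie in $\omega(x)$ for interior $x$.
\item $W^s(\xi_1)$ and $W^s(\xi_2)$ are coordinate axes. The boundary parts of $W^u(\xi_1)$ and $W^u(\xi_2)$ are exactly the connections to $\xi_2^*$ and $\xi_3$.
\item $W^s(\xi_2^*)\subset\{x_3=0\}$, while the part of $W^u(\xi_2^*)\setminus\{\xi_2^*\}$ in the octant lies in $\R^3_+$. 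So $\xi_2^*\in\omega(x)$ would produce an interior $\omega$-limit point, contradicting theorem \ref{intp1}.
\end{itemize}
Hence $\xi_3\in\omega(x)$ in every case. Do not feed $\xi_2^*\to\xi_3$ into this argument as a known connection. It is an output, obtained afterwards by applying the conclusion to the interior branch of $W^u(\xi_2^*)$ that your eigenvalue computation provides. The paper also leaves this last step implicit.
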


\proof
We use the notation $N_j$ for the sets
\begin{equation}\label{nset}
\renewcommand{\arraystretch}{1.2}
\begin{array}{l}
N_1=\{~\bx\in\R^3_{0,+}~:~1-x_1+\beta_1x_2+\gamma_1x_3=0~\},\\
N_2=\{~\bx\in\R^3_{0,+}~:~1-x_2+\beta_2x_3+\gamma_2x_1=0~\},\\
N_3=\{~\bx\in\R^3_{0,+}~:~1-x_3+\beta_3x_1+\gamma_3x_2=0~\}\\
\end{array}
\end{equation}
and denote $L_{ij}=N_i\cap N_j$.

The existence of the steady state $\xi_2^*$ in $(x_1,x_2,0)$ and
heteroclinic trajectories $\xi_1\to\xi_2^*$,
$\xi_1\to\xi_3$, $\xi_2\to\xi_2^*$ and $\xi_2\to\xi_3$ follows from
theorem \ref{tlv2}.

The sets $N_1$ and $N_2$ intersect the coordinate axis $x_3$ at the
points $(0,0,q_{01})$ and $s^*_0=(0,0,q_{02})$, respectively, where
$q_{01}=-1/\gamma_1<1$ and $q_{02}=-1/\beta_2<1$ by the conditions of the
theorem. For definiteness, assume that $q_{02}>q_{01}$. Let
$(1,0,q_{11})$ and $s^*_1=(1,0,q_{12})$ be the intersections of the line
$(1,0,x_3)$ with the sets $N_1$ and $N_2$, respectively (see figure \ref{fig5}).
We have $q_{11}=-(1+\beta_1)/\gamma_1>0$ and $q_{12}=0$ , i.e.,
$q_{11}>q_{12}$. Therefore, the point of intersection of $L_{12}$ with
the plane $x_1=0$, that we denote by $s^*$, belongs to the line segment
bounded by $s^*_0$ and $s^*_1$. The conditions of the theorem imply that
$\dot x_3(s^*_0)>0$ and $\dot x_3(s^*_1)>0$, therefore $\dot x_3(s^*)>0$.

\begin{figure}
{\large
\vspace{-2cm}\hspace*{-12mm}\includegraphics[width=90mm]{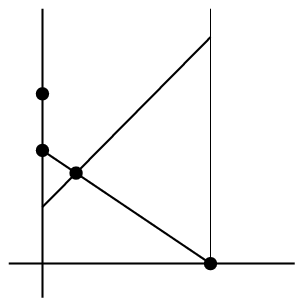}

\vspace*{-34mm}\hspace*{59mm}$x_2$

\vspace*{-43mm}\hspace*{12mm}$x_3$

\vspace*{25mm}\hspace*{24mm}$s^*$

\vspace*{-15mm}\hspace*{14mm}$s^*_0$

\vspace*{-15mm}\hspace*{14mm}$\xi_3$

\vspace*{24mm}\hspace*{42mm}$s^*_1=\xi_2$

\vspace*{-42mm}\hspace*{35mm}$N_1$

\vspace*{16mm}\hspace*{36mm}$N_2$

\vspace{12mm}

}

\caption{
Intersection of the sets $N_1$ and $N_2$ with the plane
$(0,x_2,x_3)$.
\label{fig5}}
\end{figure}

The octant $\R^3_{0,+}$ is decomposed by $N_3$ into two subsets, $U_+$ and $U_-$,
such that $\dot x_3(\bx)>0$ for $\bx\in U_+$ and $\dot x_3(\bx)\le0$ for
$\bx\in U_-$. The set $L_{12}$ is a line segment, bounded by
$\xi_2^*$ from below and by $s^*$ from above. By the conditions of the
theorem $\dot x_3(\xi_2^*)>0$, which implies that both $\xi_2^*$ and $s^*$
belong to $U_+$. Hence, $L_{12}$ does not intersect $N_3$. Therefore,
the system (\ref{uneq30}) does not have an interior steady state in $\R^3_+$.

Consider the box $\widehat U$,
$$\widehat U=\{~~(x_1,x_2,x_3)~:~0\le x_1\le \hat x_1,\ 0\le x_2\le \hat x_2,\
0\le x_3\le \hat x_3~~\},$$
where the values of $\hat x_i$, $i=1,2$, are different for different signs
of $\gamma_2$ and $\beta_3$,namely
\begin{equation}
\renewcommand{\arraystretch}{1.2}
\begin{array}{lcl}
\beta_1\le0,\ \gamma_2\le0&:
& \hat x_1\ge 1,\ \hat x_2\ge 1\\
\beta_1\le0,\ \gamma_2>0&:
& \hat x_1\ge 1,\ \hat x_2\ge 1+\hat x_1\gamma_2\\
\beta_1>0,\ \gamma_2\le0&:
& \hat x_2\ge 1,\ \hat x_1\ge 1+\hat x_2\beta_1\\
\beta_1>0,\ \gamma_2>0 &:&
 q\ge 1,\
\hat x_1=q(1+\beta_1)/(1-\beta_1\gamma_2),\\
&&\hat x_2=q(1+\gamma_2)/(1-\beta_1\gamma_2),
\end{array}
\end{equation}
and $\hat x_3>1+\beta_3\hat x_1+\gamma_3\hat x_2$.
The proof that $\widehat U$ is forward invariant
is similar to the one in theorem \ref{tlv30}.
By theorem \ref{intp1} the system (\ref{glv3}) does not have $\omega$-sets
inside the box, therefore all trajectories are attracted by the stable $\xi_2^*$.
\qed

\begin{remark}\label{rem2}
Theorem \ref{tlv3} implies existence of two-dimensional heteroclinic connections
$\xi_1\to\xi_2^*$ and $\xi_2\to\xi_2^*$, while by theorem \ref{tlv30} the
connections $\xi_1\to\xi_3$ and $\xi_2\to\xi_3$ are two-dimensional.
These can be used for the construction of examples of dynamical
system possessing heteroclinic cycles or networks involving two-dimensional
connections.
\end{remark}


\begin{thebibliography}{99}

\bibitem{ash24}
P.~Ashwin, M.~Fadera, C.~Postlethwaite.
Network attractors and nonlinear dynamics of neural computation.
{\it Current Opinion in Neurobiology} {\bf 84}, 102818 (2024).

\bibitem{agh88}
D.~Armbruster, J.~Guckenheimer and P.~Holmes.
\newblock Heteroclinic cycles and modulated travelling
waves in systems with $O(2)$ symmetry.
\newblock {\it Physica D} {\bf 29}, 257 -- 282 (1988).



\bibitem{ac10}
M.A.D.~Aguiar and S.B.S.D.~Castro.
\newblock Chaotic switching in a two-person game.
\newblock {\it Physica D} {\bf 239}, 1598 -- 1609 (2010).

\bibitem{bic19}
C.~Bick, A.~Lohse.
Heteroclinic Dynamics of Localized Frequency Synchrony:
Stability of Heteroclinic Cycles and Networks.
{\it J. Nonlinear Sci.} {\bf 29}, 2571--2600, (2019).

\bibitem{bra94}
W.~Brannath.
\newblock Heteroclinic networks on the tetrahedron.
\newblock {\em Nonlinearity} {\bf 7}, 1367 -- 1384 (1994).

\bibitem{cas22}
S.~B.~S.~D.~Castro, A.~Ferreira, L.~Garrido-da-Silva, I.~S.~Labouriau.
Stability of Cycles in a Game of Rock-Scissors-Paper-Lizard-Spock.
{\it SIAM J. Appl. Dyn. Syst.} {\bf 21}, (2022).

\bibitem{cas25}
S.B.S.D.~Castro, A.M.~Rucklidge.
Robust Heteroclinic Cycles in Pluridimensions.
{\it J. Nonlinear Sci.} {\bf 35}, 80, (2025).

\bibitem{ch03}
P.~Chossat, D.~Armbruster.
Dynamics of polar reversals in spherical dynamos.
{\it Proc. R. Soc. Lond. A} {\bf 459}, 577--596 (2003).

\bibitem{clp}
P.~Chossat, A.~Lohse, O.~Podvigina.
Pseudo-simple heteroclinic cycles in $R^4$.
\newblock {\it Physica D} {\bf 372}, 1 -- 21 (2018).

\bibitem{cro03}
D.T.~Crommelin.
Regime transitions and heteroclinic connections in a barotropic atmosphere.
{\it J. Atmos. Sci.} {\bf 60}, 229--246 (2003).

\bibitem{cru12}
M.~Crucifix.
Oscillators and relaxation phenomena in Pleistocene climate theory.
{\it Phil. Trans. R. Soc. A} {\bf 370}, 1140--1165 (2012).

\bibitem{fi96}
M.~Field.
\newblock {\em Lectures on bifurcations, dynamics and symmetry.}
\newblock Longman: Harlow, England 1996.

\bibitem{hs98}
J.~Hofbauer and K.~Sigmund.
\newblock {\em Evolutionary games and population Dynamics.}
\newblock CUP: Cambridge, 1998.

\bibitem{gar}
L.~Garrido-da-Silva, S.B.S.D.~Castro.
Stability of quasi-simple heteroclinic cycles.
{\it Dynamical systems} {\bf 34}, 14--39 (2019).

\bibitem{gukhol}
J.~Guckenheimer and P.~Holmes.
\newblock {\em Nonlinear oscillations, dynamical systems and bifurcations of
vector fields.} Applied mathematical sciences; vol. 42.
\newblock Springer-Verlag: New York, 1983.

\bibitem{ks94}
V.~Kirk and M. Silber.
\newblock A competition between heteroclinic cycles.
\newblock {\em Nonlinearity} {\bf 7}, 1605 -- 1621 (1994).

\bibitem{Kru97}
M.~Krupa.
\newblock Robust heteroclinic cycles.
\newblock {\em Journal of Nonlinear Science}, {\bf 7}, 129--176 (1997).

\bibitem{km95a}
M.~Krupa and I.~Melbourne.
\newblock Asymptotic stability of heteroclinic cycles in systems with symmetry.
\newblock {\em Ergodic Theory Dyn. Syst.}, {\bf 15}, 121--148 (1995).

\bibitem{km95b}
M.~Krupa and I.~Melbourne.
\newblock Nonasymptotically stable attractors in ${\bf O}(2)$ mode interaction.
\newblock {\em Normal Forms and Homoclinic Chaos} (W.F. Langford and W. Nagata, eds.)
Fields Institute Communications {\bf 4}, Amer. Math. Soc.,
1995, 219--232.

\bibitem{km04}
M.~Krupa and I.~Melbourne.
\newblock Asymptotic stability of heteroclinic cycles in systems with symmetry, II.
\newblock {\em Proc. Roy. Soc. Edinburgh} {\bf 134A}, 1177--1197 (2004).

\bibitem{may75}
R.M.~May, W.J.~Leonard.
Nonlinear aspects of competition between three species,
{\it SIAM J. Appl. Math.} {\bf 29}, 243--253 (1975).

\bibitem{Mel91}
I. Melbourne.
\newblock An example of a non-asymptotically stable attractor.
\newblock {\em Nonlinearity} {\bf 4}, 835 -- 844 (1991).

\bibitem{mit}
B.S.~Mityagin.
The Zero Set of a Real Analytic Function.
{\it Mat. Zametki} {\bf 107}, 473--475 (2020), in Russian.

\bibitem{pf10}
F.~Pailis, S.~Fauve.
Mechanisms for magnetic field reversals.
{\it Phil. Trans. R. Soc. A} {\bf 368}, 1595--1605 (2010).

\bibitem{op12}
O.~Podvigina.
Stability and bifurcations of heteroclinic cycles of type Z.
\newblock {\it Nonlinearity} {\bf 25}, 1887 -- 1917 (2012).

\bibitem{op20}
O.~Podvigina.
Heteroclinic Cycles in Nature.
{\it Izvestiya Physics of the Solid Earth} {\bf 56}, 117--124 (2020).

\bibitem{op23a}
O.~Podvigina.
Two-dimensional heteroclinic connections in the generalized Lotka-Volterra
system.
{\it Dynamical systems} {\bf 38}, 163--178 (2023).

\bibitem{op23b}
O.~Podvigina.
Behaviour of trajectories near a two-cycle heteroclinic network
{\it Dynamical systems} {\bf 38}, 576--596 (2023).

\bibitem{pa11}
O.M.~Podvigina and P.~Ashwin.
\newblock On local attraction properties and a stability index for
heteroclinic connections.
\newblock {\it Nonlinearity} {\bf 24}, 887 -- 929 (2011).

\bibitem{pos10}
C.M.~Postlethwaite.
\newblock A new mechanism for stability loss from a heteroclinic cycle.
\newblock {\it Dynamical systems} {\bf 25}, 305 -- 322 (2010).

\bibitem{pos22a}
C.~M.~Postlethwaite, A.~M.~Rucklidge.
Stability of cycling behaviour near a heteroclinic network model of
Rock-Paper-Scissors-Lizard-Spock.
\newblock {\it Nonlinearity} {\bf 35}, 1702 (2022).

\bibitem{pos22b}
C.~M.~Postlethwaite, R.~Sturman.
Stability of heteroclinic cycles in ring graphs.
\newblock {\it Chaos} {\bf 32}, 063104 (2022).

\bibitem{pj98}
M.R.E.~Proctor and C.A.~Jones.
\newblock The interaction of two spatially resonant patterns in thermal
convection. I. Exact 1:2 resonance.
\newblock {\it J. Fluid. Mech.} {\bf 188}, 301 -- 335 (1998).

\bibitem{rab06}
M.I.~Rabinovich, P.~Varona, A.~Selverston, H.D.I.~Abarbanel.
Dynamical principles in neuroscience.
{\it Rev. Mod. Phys.} {\bf 78}, 1213--1265 (2006).

\end{thebibliography}
\end{document}